\documentclass[oneside,leqno,a4paper]{amsart}

\usepackage[a4paper,margin=3cm]{geometry}
\usepackage[utf8]{inputenc}
\usepackage[T1]{fontenc}
\usepackage{lmodern}
\usepackage[english]{babel}
\usepackage{microtype}

\usepackage{amsmath}
\usepackage{amssymb}
\usepackage{amsthm}
\usepackage{mathrsfs}

\usepackage{enumitem}

\usepackage[
	colorlinks=true,
	linkcolor=blue,
	citecolor=blue,
	urlcolor=blue,
	backref=page
]{hyperref}

\usepackage[textsize=tiny,color=green!40]{todonotes}

\newcommand{\R}{\mathbb{R}}
\newcommand{\N}{\mathbb{N}}
\newcommand{\Prob}{\mathbb{P}}
\newcommand{\1}{\mathbf{1}}
\newcommand{\cF}{\mathcal{F}}
\newcommand{\cB}{\mathcal{B}}
\newcommand{\cD}{\mathcal{D}}
\newcommand{\cM}{\mathcal{M}}

\newcommand{\abs}[1]{\left\lvert#1\right\rvert}
\newcommand{\norm}[1]{\left\lVert#1\right\rVert}

\newcommand{\loc}{\mathrm{loc}}
\DeclareMathOperator{\E}{\mathbb{E}}
\DeclareMathOperator{\supp}{supp}
\DeclareMathOperator*{\esslim}{ess\,lim}
\DeclareMathOperator{\diver}{div}

\numberwithin{equation}{section}

\theoremstyle{plain}
\newtheorem{theorem}{Theorem}[section]
\newtheorem{proposition}[theorem]{Proposition}
\newtheorem{lemma}[theorem]{Lemma}
\newtheorem{corollary}[theorem]{Corollary}

\theoremstyle{definition}
\newtheorem{definition}[theorem]{Definition}
\newtheorem{assumption}[theorem]{Assumption}

\theoremstyle{remark}
\newtheorem{remark}[theorem]{Remark}

\begin{document}

\title[Strong initial traces]
{Existence of strong initial traces for
\\ stochastic conservation laws}

\author[M. Erceg]{M. Erceg}
\address[M. Erceg]
{\newline University of Zagreb
\newline Zagreb, Croatia}
\email[]{marko.erceg@math.hr}

\author[K. H. Karlsen]{K. H. Karlsen}
\address[Kenneth H. Karlsen]
{\newline Department of Mathematics
\newline University of Oslo
\newline Oslo, Norway}
\email[]{kennethk@math.uio.no}

\author[N. Konatar]{N. Konatar}
\address[N. Konatar]
{\newline Faculty of Mathematics and Natural Sciences
\newline University of Montenegro
\newline Cetinjski put bb, 81000 Podgorica, Montenegro}
\email[]{konatarn@yahoo.com}

\author[D. Mitrovi{\'c}]{D. Mitrovi{\'c}}
\address[Darko Mitrovi{\'c}]
{\newline Faculty of Mathematics and Natural Sciences
\newline University of Montenegro
\newline Cetinjski put bb, 81000 Podgorica, Montenegro}
\email[]{darkom@ucg.ac.me}

\subjclass[2020]{Primary: 35L65, 35R60; Secondary: 60H15, 35B40}

\keywords{Stochastic conservation law, kinetic solution,
strong trace, H-measure, blow-up method}

\raggedbottom
\allowdisplaybreaks

\date{\today}

\begin{abstract}
We prove existence and uniqueness of a strong initial trace for every bounded
kinetic solution of a stochastic scalar conservation law, although no initial
value is prescribed and no nondegeneracy condition is imposed on the flux.
The core of the proof is pathwise.  After fixing a realization,
the rescaled stochastic terms and kinetic measure vanish in the blow-up
limit, so Panov's compactness argument applies; degenerate flux intervals are
handled by recursive dimension reduction.  The stochastic setting creates 
several additional difficulties.  The martingale identities must remain valid on one
common full-probability set throughout the reductions, which requires a
parameterized stochastic-Fubini construction.  Moreover, the pathwise trace
is not automatically measurable because its exceptional sets may depend on
the realization.  Deterministic time averages and right-continuity of the
filtration yield a jointly measurable initial trace, with local strong
convergence along essential times, both almost surely and in mean.
\end{abstract}

\maketitle

\setcounter{tocdepth}{1}

% Table of contents without page numbers.
% This keeps the section titles and their hyperlinks.
{\small
\begingroup
\makeatletter
\let\oldcontentsline\contentsline
\renewcommand{\contentsline}[4]{%
	\oldcontentsline{#1}{#2}{}{#4}%
}
\tableofcontents
\makeatother
\endgroup}

%%%%%%%%%%%%%%%%%%%%%%%%%
%%%%%%%%%%%%%%%%%%%%%%%%%
\section{Introduction}

We consider bounded kinetic solutions, defined only for positive times, of
\begin{equation}\label{eq:spde}
	du+\diver_x f(u)\,dt
	=
	\sum_{\ell\geq1} b_\ell(t,x,u)\,dW_\ell(t),
	\qquad
	(t,x)\in(0,T)\times\R^d.
\end{equation}
No initial value is included in the definition of solution. We show that the
equation nevertheless determines a unique strong trace at the initial
hyperplane, without any flux nondegeneracy assumption.

More precisely, we construct an
$\cF_0\otimes\cB(\R^d)$-measurable function
$u_0:\Omega\times\R^d\to[-M,M]$ such that, for every compact
$K\Subset\R^d$,
\begin{equation}\label{eq:intro-pathwise-trace}
	\esslim_{t\downarrow0}
	\int_K
	\abs{u(\omega,t,x)-u_0(\omega,x)}\,dx
	=
	0
\end{equation}
for almost every $\omega\in\Omega$. As a consequence,
\begin{equation}\label{eq:intro-expected-trace}
	\esslim_{t\downarrow0}
	\E\int_K
	\abs{u(t,x)-u_0(x)}\,dx
	=
	0.
\end{equation}
Thus $u(t)$ converges strongly to $u_0$ in
$L^1(\Omega\times K)$, in the essential-time sense. Throughout the paper,
$\esslim_{t\downarrow0}F(t)=0$ means that
$$
\lim_{\delta\downarrow0}
\operatorname*{ess\,sup}_{0<t<\delta}\abs{F(t)}
=0.
$$

Vasseur and Panov developed the deterministic theory of strong traces for
multidimensional scalar conservation laws
\cite{Vasseur:2001lr,Panov:2005lr,Panov:2007aa}. Our argument uses Panov's
two-parameter H-measure localization method within Tartar's microlocal
compactness framework \cite{Panov:1994gf,Tartar:1990mq}.
{The same approach was previously used to show existence of strong traces 
for degenerate parabolic second-order partial differential problems
\cite{Erceg:2022aa}.} 

The deterministic {($b_\ell=0$, $\ell\in\N$)} 
kinetic formulation of {\eqref{eq:spde}} was introduced by
Lions--Perthame--Tadmor \cite{Lions:1994qy}. Early stochastic
theories were developed by Kim \cite{Kim:2003mz}, Feng--Nualart
\cite{Feng:2008ul}, and Vallet--Wittbold \cite{Vallet:2009uq}. On the torus,
Debussche--Vovelle \cite{Debussche:2010fk}
introduced the stochastic kinetic-solution
framework for multiplicative noise, and Dotti--Vovelle
\cite{Dotti:2018aa} later established convergence of approximations.
The work \cite{Frid:2021us} developed a
corresponding kinetic well-posedness theory on the unbounded domain $\R^d$.
These stochastic theories treat Cauchy or initial-boundary value problems
with a prescribed initial state.  In contrast, the bounded kinetic solution
of \eqref{eq:spde} in Definition~\ref{def:kinetic-solution} is defined only
on $(0,T)\times\R^d$, and no datum at $t=0$ is assumed.
The works of Feng--Nualart and Debussche--Vovelle were followed by an
extensive literature that cannot be surveyed here; the references above are
limited to developments most directly relevant to the present problem.

Strong spatial boundary traces for stochastic conservation laws on bounded
domains were established in \cite{Frid:2022trace} and used there to prove
well-posedness of a Neumann initial-boundary value problem. Their trace
theorem assumes a $C^3$ flux satisfying a nondegeneracy condition and
identifies the trace under deformations of the spatial boundary. In
contrast, we recover the temporal trace at $t=0$ of a bounded kinetic
solution posed only on $(0,T)\times\R^d$, with no prescribed initial value
and no flux nondegeneracy assumption.

Fixing a realization does not immediately reduce the stochastic equation to
a deterministic quasi-solution: its kinetic formulation still contains the
distributional time derivative of a martingale.  We show that, under the
space--time rescaling near the initial hyperplane, this martingale
contribution and the It\^o correction vanish in the limit.  The limiting
kinetic equation is therefore deterministic, and Panov's compactness argument
can be applied pathwise.

Panov's H-measure localization method
\cite{Panov:1994gf,Panov:2005lr,Panov:2007aa} reduces a possible loss of
compactness to an interval on which a directional component of the
space--time flux $\Phi(r):=(r,f(r))$ is constant.  On such an interval, a
linear change of variables isolates a spatial coordinate that drops out of
the transformed transport operator.  Fixing this transverse coordinate
produces a kinetic equation in one fewer spatial dimension, which can be
treated inductively.  The stochastic ingredients are the pathwise
construction of the weak trace, simultaneous blow-up estimates for the
kinetic measure and martingale terms, and the treatment of the stochastic
integral under dimension reduction.  This integral also depends on the
transverse coordinate.  An $L^2$-valued stochastic Fubini argument shows
that, for almost every fixed value of that coordinate, the corresponding
section is the It\^o integral in the lower-dimensional kinetic equation.
The kinetic measure itself is sliced pathwise by deterministic measure
theory; its sections need no stochastic measurability.

Compactness and trace identification are pathwise.  The first step produces
a weak kinetic trace $h_0^\omega$; the blow-up and dimension-reduction
argument identifies it as the equilibrium associated with a strong trace
$u_0^\omega$.  Subsequences, exceptional spatial points, admissible
transverse coordinates, and section measures may depend on $\omega$.
Consequently, this construction does not imply measurability of
$\omega\mapsto u_0^\omega$.

We avoid measurable selection by using the deterministic time averages
$u_n:=n\int_0^{1/n}u(t)\,dt$.  They are
$\cF_{1/n}$-measurable as $L^1_{\loc}(\R^d)$-valued maps and converge almost
surely to $u_0^\omega$.  Tail measurability (for every $m$, the limit of
$(u_n)_{n\geq m}$ is $\cF_{1/m}$-measurable) and right-continuity of the
filtration place the limit in $\cF_0$, while the Bochner--Fubini
identification supplies a jointly measurable representative.  The uniform
bound then gives \eqref{eq:intro-expected-trace} by dominated convergence.

The remaining part of this paper is organized as follows: 
Section~\ref{sec:stochastic-kinetic} states the stochastic and kinetic
framework and the main theorem.  Section~\ref{sec:weak-trace} constructs the
pathwise weak trace, and Section~\ref{sec:panov-compactness} records the
required part of Panov's compactness theory. Section~\ref{sec:pathwise-blowup}
establishes the blow-up estimates and compactness condition.
Sections~\ref{sec:pathwise-dimension-reduction}
and~\ref{sec:pathwise-identification} treat degenerate intervals by slicing and
induction. Section~\ref{sec:final-measurability} resolves the measurability
problem by deterministic time averages and proves convergence in expectation
and uniqueness.

%%%%%%%%%%%%%%%%%%%%%%%%%
%%%%%%%%%%%%%%%%%%%%%%%%%
\section{Stochastic and kinetic framework}
\label{sec:stochastic-kinetic}

\subsection{The stochastic basis and coefficients}

Let
$\mathcal S=\bigl(\Omega,\cF,(\cF_t)_{t\geq0},\Prob\bigr)$
be a complete stochastic basis whose filtration is complete and
right-continuous. Let $(W_\ell)_{\ell\geq1}$ be a sequence of independent
real-valued standard $(\cF_t)$-Wiener processes.
Thus each $W_\ell$ is adapted and, for $0\leq s<t$, the increment
$W_\ell(t)-W_\ell(s)$ is independent of $\cF_s$ and has law
$\mathcal N(0,t-s)$. We denote by $\mathcal P$ the predictable sigma-field
on $\Omega\times(0,T)$; see \cite{DaPrato:2014aa} for the standard
stochastic framework.

Set
$$
a(\lambda):=f'(\lambda),
\qquad
G^2(t,x,\lambda)
:=
\sum_{\ell\geq1}
\abs{b_\ell(t,x,\lambda)}^2.
$$

We impose the following assumptions on the flux and the noise coefficients.

\begin{assumption}\label{ass:coeff}
The flux satisfies \(f\in C^1(\R;\R^d)\). For every \(\ell\geq1\), the
coefficient
\[
b_\ell:[0,T]\times\R^d\times\R\longrightarrow\R
\]
is jointly Borel measurable. For every \(t\in[0,T]\), the map
\[
(x,\lambda)\longmapsto b_\ell(t,x,\lambda)
\]
is of class \(C^1\), and the maps
\[
(t,x,\lambda)\longmapsto
\partial_\lambda b_\ell(t,x,\lambda),
\qquad
(t,x,\lambda)\longmapsto
\nabla_x b_\ell(t,x,\lambda)
\]
are jointly Borel measurable. Moreover,
for every compact set $K\Subset\R^d$ and every $R>0$,
\begin{equation}\label{eq:noise-local-bound}
	\sup_{(t,x,\lambda)\in[0,T]\times K\times[-R,R]} \sum_{\ell\geq1}
		\left(
	\abs{b_\ell(t,x,\lambda)}^2
	+
	\abs{\partial_\lambda b_\ell(t,x,\lambda)}^2
	+
	\abs{\nabla_x b_\ell(t,x,\lambda)}^2
	\right)
	<\infty.
\end{equation}
\end{assumption}

The zeroth-order summand in \eqref{eq:noise-local-bound} ensures that the
stochastic integral in \eqref{eq:kinetic-weak} and the processes in
\eqref{eq:scalar-martingale-pairing} and
\eqref{eq:spatial-martingale-process} are locally well defined in $L^2$.
{The derivative summands ensure
that the coefficients transformed in
Lemma~\ref{lem:pathwise-linear-change} again satisfy
\eqref{eq:noise-local-bound}.}  All later uses of these derivatives require
only the almost-everywhere chain rule and the essential-supremum bounds in
\eqref{eq:noise-local-bound}; hence the arguments remain valid under the
corresponding local Lipschitz hypotheses.

\subsection{Kinetic measures up to the initial boundary}

Fix $M>0$ and choose $L>M+1$.  Throughout, every admissible kinetic test
function has $\lambda$-support compactly contained in $(-L,L)$.

\begin{definition}[Local kinetic measure up to $t=0$]
\label{def:kinetic-measure}
A random nonnegative Radon measure $m$ on
$(0,T)\times\R^d\times\R$ is called a local kinetic measure up to $t=0$
if the following properties hold:
\begin{enumerate}[label=\textup{(\roman*)}]
	\item for every
	$\psi\in C_c\bigl((0,T)\times\R^d\times\R\bigr)$, the map
	$$
	\omega
	\longmapsto
	\int_{(0,T)\times\R^d\times\R}
	\psi(t,x,\lambda)\,dm(\omega;t,x,\lambda)
	$$
	is measurable;

	\item for every compact set $K\Subset\R^d$ and every $R>0$,
	\begin{equation}\label{eq:expected-measure-bound}
		\E
		m\bigl(
		(0,T)\times K\times[-R,R]
		\bigr)
		<\infty;
	\end{equation}

	\item for every
	$\psi\in C_c(\R^d\times\R)$, the process
	$$
	t\longmapsto
	\int_{(0,t]\times\R^d\times\R}
	\psi(x,\lambda)\,dm(s,x,\lambda)
	$$
	admits a predictable version.
\end{enumerate}
As part of the bounded-solution framework, we assume that
$\supp m\subset(0,T)\times\R^d\times[-M,M]$ almost surely.
\end{definition}

To make the realization in the pathwise arguments explicit, let
$K_j:=\{x\in\R^d:\abs{x}\leq j\}$ be the closed ball of radius $j$ centered
at the origin.  Let $\Omega_{\mathrm{supp}}$ be a full-probability set on
which the support condition in
Definition~\ref{def:kinetic-measure} holds, and define
\begin{equation}\label{eq:local-mass-event}
	\Omega_m
	:=
	\Omega_{\mathrm{supp}}
	\cap
	\bigcap_{j=1}^{\infty}
	\left\{
	\omega\in\Omega:
	m\bigl(
	\omega;
	(0,T)\times K_j\times[-j,j]
	\bigr)<\infty
	\right\}.
\end{equation}
By \eqref{eq:expected-measure-bound} and countable intersection,
$\Prob(\Omega_m)=1$.  If $\omega\in\Omega_m$, $K\Subset\R^d$, and $R>0$,
choose $j$ such that $K\subset K_j$ and $R\leq j$.  Then
$$
m\bigl(
\omega;
(0,T)\times K\times[-R,R]
\bigr)
\leq
	m\bigl(
	\omega;
	(0,T)\times K_j\times[-j,j]
	\bigr)
	<\infty.
$$
Thus $\Omega_m$ is the single full-probability set on which all local
finiteness assertions up to $t=0$ are used.

We now specify the class of solutions to which the trace theorem applies.

\begin{definition}[Kinetic solution without prescribed initial data]
\label{def:kinetic-solution}
A function $u:\Omega\times(0,T)\times\R^d\to\R$ is called a bounded
kinetic solution of \eqref{eq:spde} on $(0,T)\times\R^d$ if:

\begin{enumerate}[label=\textup{(\roman*)}]
	\item $u$ is $\mathcal P\otimes\cB(\R^d)$-measurable;

	\item $\abs{u(\omega,t,x)}\leq M$ 
	for almost every $(\omega,t,x)$;

	\item there exists a local kinetic measure $m$
	in the sense of Definition~\ref{def:kinetic-measure}
	such that, with
	$h(\omega,t,x,\lambda):=\1_{\{\lambda<u(\omega,t,x)\}}$,
	the identity
	\begin{equation}\label{eq:kinetic-weak}
	\begin{aligned}
		&\int_0^T
		\int_{\R^d}
		\int_{\R}
		h(\omega,t,x,\lambda)
		\bigl(
		\partial_t\varphi(t,x,\lambda)
		+
		a(\lambda)\cdot\nabla_x\varphi(t,x,\lambda)
		\bigr)
		\,d\lambda\,dx\,dt\\
		&\quad=
		-\sum_{\ell\geq1}
		\int_0^T
		\left(
		\int_{\R^d}
		b_\ell(t,x,u(\omega,t,x))
		\varphi(t,x,u(\omega,t,x))
		\,dx
		\right)
		dW_\ell(t)\\
		&\qquad
		-\frac12
		\int_0^T
		\int_{\R^d}
		G^2(t,x,u(\omega,t,x))
		\partial_\lambda\varphi(t,x,u(\omega,t,x))
		\,dx\,dt\\
		&\qquad
		+
		\int_{(0,T)\times\R^d\times\R}
		\partial_\lambda\varphi(t,x,\lambda)
		\,dm(\omega;t,x,\lambda)
	\end{aligned}
	\end{equation}	holds almost surely for every
	$\varphi
	\in
	C_c^1\bigl(
	(0,T)\times\R^d\times(-L,L)
	\bigr)$.
\end{enumerate}
\end{definition}

For $a<b$, let
$T_{a,b}(r):=\max\{a,\min\{b,r\}\}$ denote truncation to $[a,b]$.
Since $u=T_{-M,M}(u)$ almost everywhere, we henceforth replace $u$ by
$T_{-M,M}(u)$ and retain the notation $u$.  This predictable representative
satisfies $\abs{u(\omega,t,x)}\leq M$ for every $(\omega,t,x)$ and changes
neither the equivalence class of $u$ {(with respect to the space $L^\infty(\Omega\times(0,T)\times\R^d)$)} nor the kinetic identity
\eqref{eq:kinetic-weak}.

Definition~\ref{def:kinetic-solution} follows the periodic stochastic
kinetic framework of Debussche--Vovelle \cite{Debussche:2010fk} and its
$\R^d$ counterpart in \cite{Frid:2021us}.
It records entropy dissipation through the kinetic measure and the effect
of the stochastic forcing through the martingale term and the It\^o
correction, and is stable under approximation \cite{Dotti:2018aa}.
Unlike the standard Cauchy formulation, our test functions are supported
away from $t=0$ and no initial kinetic datum is prescribed.

Assumption~\ref{ass:coeff} ensures that the series of stochastic integrals in
\eqref{eq:kinetic-weak} converges in $L^2(\Omega)$ for every admissible test
function.

With the convention $h=\1_{\{\lambda<u\}}$, the corresponding formal
kinetic equation is
\begin{equation}\label{eq:kinetic-formal}
	\bigl(
	\partial_t+a(\lambda)\cdot\nabla_x
	\bigr)h
	=
	\sum_{\ell\geq1}
	b_\ell(t,x,\lambda)
	\delta_{\lambda=u}\,\dot W_\ell
	+
	\partial_\lambda
	\left(
	m
	-
	\frac12
	G^2(t,x,\lambda)\delta_{\lambda=u}
	\right).
\end{equation}

The main result recovers the missing value at $t=0$ from
\eqref{eq:kinetic-weak} alone.

\begin{theorem}[Strong initial trace]\label{thm:main}
Let Assumption~\ref{ass:coeff} hold, and let $u$ be a bounded kinetic
solution in the sense of Definition~\ref{def:kinetic-solution}.  
Then there exists a unique
function
$$
u_0
\in
L^\infty\bigl(
\Omega\times\R^d,
\cF_0\otimes\cB(\R^d),
\Prob\otimes\mathcal L^d
\bigr),
\qquad
\abs{u_0}\leq M,
$$
and a set
$$
\Omega_0\subset\Omega,
\qquad
\Prob(\Omega_0)=1,
$$
such that, for every $\omega\in\Omega_0$ and every compact set
$K\Subset\R^d$,
\begin{equation*}
	\esslim_{t\downarrow0}
	\int_K
	\abs{u(\omega,t,x)-u_0(\omega,x)}
	\,dx=0.
\end{equation*}
Here $\cF_0=\bigcap_{t>0}\cF_t$ is the initial sigma-algebra of the
right-continuous filtration $(\cF_t)_{t\geq0}${, and 
$\mathcal L^d$ represents the Lebesgue measure on $\R^d$.}

Consequently, for every compact $K\Subset\R^d$,
\begin{equation*}
	\esslim_{t\downarrow0}
	\E\int_K
	\abs{u(t,x)-u_0(x)}
	\,dx
	=
	0.
\end{equation*}
Thus $u(t)\to u_0$ strongly in $L^1(\Omega\times K)$, in the
essential-time sense. 
\end{theorem}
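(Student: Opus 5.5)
The proof runs pathwise: fix a realization, build a weak kinetic trace, upgrade it to a strong trace by blow-up and H-measures, and only at the end recover measurability in $\omega$.

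\emph{Step 1: common full-probability set.} I first fix a countable dense family of test functions $\varphi$ (and $\psi$) in $C_c^1((0,T)\times\R^d\times(-L,L))$. For this family, \eqref{eq:kinetic-weak} holds on one common full-probability set $\Omega'\subset\Omega_m$. On $\Omega'$ the martingale terms $t\mapsto\sum_\ell\int_0^t\int b_\ell\,\varphi(x,u)\,dx\,dW_\ell$ are continuous in $t$ and vanish at $t=0$. Using the bound \eqref{eq:noise-local-bound}, I would fix H\"older-type moduli of continuity at $t=0$ uniformly over the family, via BDG and Kolmogorov.

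\emph{Step 2: pathwise weak kinetic trace.} For $\omega\in\Omega'$, I test \eqref{eq:kinetic-weak} with $\varphi(t,x,\lambda)=\theta_\varepsilon(t)\phi(x,\lambda)$, where $\theta_\varepsilon$ is a cut-off rising near $t=0$. Local finiteness of $m(\omega)$ up to $t=0$ then shows that $t\mapsto\int h(t)\phi\,dx\,d\lambda$ has a limit along essential times. Combined with the weak-$*$ compactness of $\{h(t)\}$, this yields a weak trace $h_0^\omega(x,\lambda)$ that is nonincreasing in $\lambda$, with values in $[0,1]$, equal to $1$ for $\lambda<-M$ and to $0$ for $\lambda>M$.

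\emph{Step 3: identification of $h_0^\omega$ as an equilibrium.} The goal is $h_0^\omega=\1_{\{\lambda<u_0^\omega(x)\}}$; strong $L^1_{\loc}$ convergence then follows from the standard kinetic argument, namely $\int(h-h^2)\to0$ together with weak convergence. To prove it, I take a Lebesgue point $x_0$ and rescale by $h_k(t,y,\lambda)=h(\omega,\varepsilon_k t,x_0+\varepsilon_k y,\lambda)$.
\begin{itemize}
\item The rescaled kinetic measure has total mass $\varepsilon_k^{-d}m\bigl((0,\varepsilon_k T)\times B_{\varepsilon_k R}(x_0)\times[-M,M]\bigr)$. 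This tends to $0$ for a.e.\ $x_0$, because the disintegration in $x$ of the finite measure $m(\omega)$ restricted to thin slabs tends to zero.
\item The It\^o correction is $O(\varepsilon_k)$.
\item The martingale term vanishes in $W^{-1,p}$ by the H\"older modulus from Step 1.
\end{itemize}
The limit is therefore a deterministic kinetic equation $(\partial_t+a(\lambda)\cdot\nabla_y)h_\infty=0$, whose initial datum is $h_0^\omega(x_0,\cdot)$ and is constant in $y$. Panov's H-measure localization then gives compactness off intervals where some $\xi\cdot\Phi(\lambda)$ is constant. On such intervals I apply the linear change of variables and slice in the transverse coordinate, and I induct on dimension. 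The base case is $d=0$, a pure ODE in $t$, which is trivial.

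\emph{Main obstacle.} The hardest part is the slicing. The stochastic integral must restrict, for a.e.\ transverse coordinate, to the It\^o integral of the lower-dimensional equation, on one common full-probability set valid through all recursive reductions. I would first prove an $L^2(\R;\,\cdot\,)$-valued stochastic Fubini lemma for a countable parametrized family of test functions, and intersect the resulting exceptional sets; there are countably many reductions, since the directions can be taken rational after perturbation.

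\emph{Step 4: measurability and conclusion.} For measurability, set $u_n=n\int_0^{1/n}u\,dt$. By predictability of $u$, each $u_n$ is $\cF_{1/n}$-measurable, and $u_n\to u_0^\omega$ in $L^1_{\loc}$ on $\Omega'$. Hence the limit is $\bigcap_m\cF_{1/m}=\cF_0$-measurable by right-continuity. A Bochner--Fubini argument then gives a jointly $\cF_0\otimes\cB$-measurable representative.

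Dominated convergence, using $|u-u_0|\le 2M$, yields the expectation statement. Uniqueness holds because two traces coincide as $L^1_{\loc}$ limits a.s., hence $\Prob\otimes\mathcal L^d$-a.e.
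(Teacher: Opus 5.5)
Your overall route is the paper's: pathwise weak trace, blow-up in which the kinetic measure, It\^o correction and martingale vanish, Panov localization, slicing with induction on dimension, and finally time averages and right-continuity for measurability. However, the martingale step of the blow-up fails as you set it up. Once $\omega$ is fixed, you need the stochastic term for the test functions $\psi(t/\varepsilon_k,(x-y)/\varepsilon_k,\lambda)$ at almost every center $y$. These centers are, for instance, Lebesgue points of $h_0^\omega$, so they depend on $\omega$ and form an uncountable family outside the countable family of Step~1. Each of these It\^o integrals is defined only up to its own null set. Kolmogorov applied to the scalar martingales $M_\varphi$ one at a time gives no modulus that is uniform over an infinite family, so nothing extends by density. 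Even an $L^2$-valued modulus $\norm{\mathscr M^\rho(t)}_{L^2(K)}\leq C(\omega)t^\gamma$ with $\gamma<1/2$ is not enough. The rescaled test function concentrates on $B_{\varepsilon R}(y)$, so after division by $\varepsilon^d$ the modulus only bounds the rescaled term at a fixed center by $C\varepsilon^{\gamma-d/2}$. Two ingredients are missing. The first is the $L^2_{\loc}$-valued martingale $\mathscr M^\rho(t)=\sum_\ell\int_0^t\rho(u)b_\ell(\cdot,u)\,dW_\ell$ that retains $x$, for $\rho$ in a countable family. Through it, the stochastic term of every tensor $\eta(t,x)\rho(\lambda)$ equals $-\int_0^T\langle\partial_t\eta,\mathscr M^\rho\rangle\,dt$, a pathwise continuous functional of $\eta$ that covers all translations and dilations on one event. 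The second is averaging over the center: $\int_K\norm{\mathscr M^\rho(\varepsilon\tau,y+\varepsilon z)}_{L^2((0,R)\times B_R)}^2\,dy\leq\abs{B_R}\int_0^R\norm{\mathscr M^\rho(\varepsilon\tau)}_{L^2(K')}^2\,d\tau\to0$. A summable subsequence then gives vanishing in $L^2_{\loc}$, hence of the $\tau$-derivative in $H^{-1}_{\loc}$, for a.e.\ $y$, using only continuity at $t=0$. (Your H\"older idea does work at every center if you use an $L^q_{\loc}$-valued version with $q>2d$, which gives the rate $\varepsilon^{\gamma-d/q}$ for $\gamma\in(d/q,1/2)$.)

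Second, your countability argument is false: degenerate directions cannot be made rational by perturbation. For $d=1$ and $f(r)=\sqrt2\,r$ on $I$, the only directions with $\xi_0+\xi\cdot f'\equiv0$ are multiples of $(-\sqrt2,1)$. Any perturbation destroys $\widetilde a_q\equiv0$ on $I$, which is exactly what removes $\theta_j'(w_q-\zeta)$ from the sliced identity. Countability comes instead from the flux being deterministic. You index the degenerate intervals by endpoints in a fixed countable dense set of levels, and fix one (possibly irrational) direction and linear change for each before $\omega$ is chosen; this is the family $\mathfrak R$. Finally, your sketch omits how information on the slices returns to the blow-up. The paper does this in four steps:
\begin{enumerate}
\item it identifies the weak trace of almost every slice with the $\zeta$-section of the transformed weak trace, using the measurable averages $A_{n,\phi}(\zeta)$ and no measurable selection of $\zeta\mapsto h_{0,\zeta}^\omega$;
\item it deduces strong traces for the degenerate truncations $T_{c,e}(u)$;
\item it transfers these to the blow-up sequence;
\item it concludes compactness with Lemma~\ref{lem:panov-closure}.
\end{enumerate}
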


%%%%%%%%%%%%%%%%%%%%%%%%%
%%%%%%%%%%%%%%%%%%%%%%%%%
\section{The weak kinetic trace}
\label{sec:weak-trace}

We first construct the weak trace pathwise, without asserting measurability
in $\omega$. Joint measurability and convergence in expectation are recovered
in Section~\ref{sec:final-measurability}, after the weak trace has been
identified as an equilibrium.

For $\phi\in C_c^1(\R^d\times(-L,L))$, define
$$
\mathscr H_\phi(t,\omega)
:=
\int_{\R^d}\int_{\R}
h(\omega,t,x,\lambda)\phi(x,\lambda)
\,d\lambda\,dx
$$
and
\begin{equation}\label{eq:scalar-martingale-pairing}
M_\phi(t,\omega)
:=
\sum_{\ell\geq1}
\int_0^t
\left(
\int_{\R^d}
b_\ell(s,x,u(\omega,s,x))
\phi(x,u(\omega,s,x))
\,dx
\right)
dW_\ell(s).
\end{equation}
By the local square-summability assumption
\eqref{eq:noise-local-bound}, $M_\phi$ admits a continuous version
starting from zero. Throughout, symbols of the form $M_\phi$ denote
scalar martingales, whereas $\mathscr M$ denotes spatially dependent
$L^2_{\loc}$-valued martingales.

The following time-regularization of \eqref{eq:kinetic-weak} is analogous
to the time-integrated kinetic identities used by
Debussche--Vovelle \cite{Debussche:2010fk} and
Dotti--Vovelle \cite{Dotti:2018aa}.

\begin{lemma}[Time-integrated kinetic identity]
\label{lem:time-integrated}
For every
\[
\phi\in C_c^1(\R^d\times(-L,L)),
\]
there exists a set
\[
\Omega_\phi\subset\Omega_m,
\qquad
\Prob(\Omega_\phi)=1,
\]
such that, for every $\omega\in\Omega_\phi$,
$\mathscr H_\phi(\cdot,\omega)$ has a c\`adl\`ag representative on
$(0,T)$. For every $0<s<t<T$, this representative satisfies
\begin{equation}\label{eq:time-integrated}
\begin{aligned}
	\mathscr H_\phi(t,\omega)-\mathscr H_\phi(s,\omega)
	&=
	\int_s^t
	\int_{\R^d}\int_{\R}
	h(\omega,r,x,\lambda)
	a(\lambda)\cdot\nabla_x\phi(x,\lambda)
	\,d\lambda\,dx\,dr
	\\
	&\quad+
	M_\phi(t,\omega)-M_\phi(s,\omega)
	\\
	&\quad+
	\frac12
	\int_s^t
	\int_{\R^d}
	G^2(r,x,u(\omega,r,x))
	\partial_\lambda\phi(x,u(\omega,r,x))
	\,dx\,dr
	\\
	&\quad-
	\int_{(s,t]\times\R^d\times\R}
	\partial_\lambda\phi(x,\lambda)
	\,dm(\omega;r,x,\lambda).
\end{aligned}
\end{equation}
Moreover, this representative has a finite right limit at $t=0$.
Consequently,
\[
\esslim_{t\downarrow0}
\mathscr H_\phi(t,\omega)
\]
exists and is finite.
\end{lemma}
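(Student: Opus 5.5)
We prove the lemma by testing \eqref{eq:kinetic-weak} with tensor products $\varphi(t,x,\lambda)=\theta(t)\phi(x,\lambda)$, $\theta\in C_c^1((0,T))$, on a single full-probability set, and then passing to indicator functions of time intervals.

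\emph{Step 1: the identity for a countable family of $\theta$.} Fix a countable set $\Theta\subset C_c^1((0,T))$ that is dense in $C_c^1$ on every compact subinterval, for example rational translates and dilates of a fixed mollifier profile. Intersect $\Omega_m$ with the full-probability sets on which \eqref{eq:kinetic-weak} holds for each $\theta\phi$, $\theta\in\Theta$. For $\varphi=\theta\phi$, the stochastic term equals $\int_0^T\theta\,dM_\phi$. Integrating by parts against the continuous version of $M_\phi$ turns it into $-\int_0^T\theta'(t)M_\phi(t)\,dt$. This is a pathwise identity, valid almost surely for each $\theta$, and simultaneously for all $\theta\in\Theta$. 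On the resulting set, \eqref{eq:kinetic-weak} reads
\[
-\int_0^T\theta'(t)\,\mathscr H_\phi(t)\,dt=\int_0^T\theta(t)\,d\mu(t),
\]
where $\mu$ is a signed Radon measure on $(0,T)$. Its Lebesgue part is the transport density $\int\!\!\int h\,a\cdot\nabla_x\phi$ plus the Itô-correction density, both bounded because $u$ is bounded and $G^2$ is locally bounded. Its remaining parts are $dM_\phi$, understood in the integrated form above, and the pushforward of $-\partial_\lambda\phi\,dm$ to the time axis. The latter is finite on $(0,T)$ since $\omega\in\Omega_m$.

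By density the identity extends to all $\theta\in C_c^1((0,T))$, since both sides are continuous in $\theta$ in the $C^1$ topology on a fixed compact support. Hence the distributional derivative of $\mathscr H_\phi-M_\phi$ on $(0,T)$ is a finite measure. It follows that $\mathscr H_\phi-M_\phi$ coincides a.e.\ with a BV function, whose càdlàg representative is
\[
C+\int_0^t(\text{densities})\,dr-\int_{(0,t]}\partial_\lambda\phi\,dm.
\]
Adding the continuous $M_\phi$ gives a càdlàg representative of $\mathscr H_\phi$.

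\emph{Step 2: recovering \eqref{eq:time-integrated}.} Take $\theta$ to be a mollified indicator of $(s,t]$, shifted slightly to the right so that its limit is right-continuous, and pass to the limit. The Lebesgue-point and right-continuity properties of the càdlàg representative give \eqref{eq:time-integrated} for every $0<s<t<T$. The measure term uses the half-open interval $(s,t]$, consistent with the càdlàg choice.

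\emph{Step 3: the right limit at $0$.} This is the step that uses the hypothesis ``up to $t=0$''. The terms of \eqref{eq:time-integrated} have limits as $s\downarrow0$ for the following reasons:
\begin{itemize}
\item[--] the two Lebesgue integrals have bounded integrands on $(0,T)$, since $\phi$ has compact support and $|u|\le M$;
\item[--] $M_\phi$ is continuous on $[0,T]$ with $M_\phi(0)=0$;
\item[--] $\int_{(s,t]}\partial_\lambda\phi\,dm$ converges to $\int_{(0,t]}\partial_\lambda\phi\,dm$, which is finite because $m((0,T)\times K\times[-L,L])<\infty$ on $\Omega_m$.
\end{itemize}
Therefore $\mathscr H_\phi(s)=\mathscr H_\phi(t)-[\ldots]_s^t$ has a finite limit as $s\downarrow0$. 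Since the representative agrees a.e.\ with $\mathscr H_\phi$, the $\esslim$ exists and is finite.

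The main obstacle is Step 1. The stochastic integral in \eqref{eq:kinetic-weak} is defined only up to a null set depending on $\varphi$, so we need the countable family, the pathwise integration-by-parts identification of $\int\theta\,dW$-type integrals with $-\int\theta' M_\phi$, and the interchange of the sum over $\ell$ with $\theta$. The interchange should hold by $L^2$ convergence of the series and linearity of the stochastic integral in the deterministic factor $\theta(t)$.
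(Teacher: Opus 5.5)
Your proposal is correct and follows the paper's proof essentially step by step. The shared steps are: a countable family of test functions $\theta\phi$; the pathwise integration by parts $\int_0^T\theta\,dM_\phi=-\int_0^T\theta'M_\phi\,dt$; extension by density in $\theta$; the observation that $D_t(\mathscr H_\phi-M_\phi)$ is a finite measure on $(0,T)$ (only this difference is BV, since $dM_\phi$ itself is not a measure); and the right limit at $0$ from $M_\phi(0)=0$, integrability of the densities, and finiteness of $m$ on $\Omega_m$.

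The one deviation is your Step~2, which is valid but unnecessary. The paper reads \eqref{eq:time-integrated} off directly by evaluating the explicit c\`adl\`ag representative $c+M_\phi(t)+\int_0^t(\text{densities})\,dr-\int_{(0,t]\times\R^d\times\R}\partial_\lambda\phi\,dm$ from your Step~1 at $t$ and at $s$ and subtracting.
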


\begin{proof}
Fix $\phi\in C_c^1(\R^d\times(-L,L))$. Choose
$\Omega_\phi\subset\Omega_m$ of probability one such that the chosen
continuous version of $M_\phi$ is defined and
\eqref{eq:kinetic-weak} holds for
\[
\varphi(t,x,\lambda)=\theta(t)\phi(x,\lambda)
\]
simultaneously for $\theta$ in a fixed countable determining family in
$C_c^1((0,T))$. Since
\[
\int_0^T\theta(t)\,dM_\phi(t)
=
-\int_0^T\theta'(t)M_\phi(t)\,dt,
\]
and all the resulting terms depend continuously on $\theta$, the identity
extends to every $\theta\in C_c^1((0,T))$.

Fix $\omega\in\Omega_\phi$. Define
\[
\begin{aligned}
A_\phi(r,\omega)
&:=
\int_{\R^d}\int_{\R}
h(\omega,r,x,\lambda)
a(\lambda)\cdot\nabla_x\phi(x,\lambda)
\,d\lambda\,dx
\\
&\quad+
\frac12
\int_{\R^d}
G^2(r,x,u(\omega,r,x))
\partial_\lambda\phi(x,u(\omega,r,x))
\,dx
\end{aligned}
\]
and define the finite signed measure $\nu_\phi^\omega$ on $(0,T)$ by
\[
\nu_\phi^\omega(B)
:=
\int_{B\times\R^d\times\R}
\partial_\lambda\phi(x,\lambda)
\,dm(\omega;r,x,\lambda)
\]
for every Borel set $B\subset(0,T)$. Its cumulative function
\[
V_\phi(t,\omega)
:=
\nu_\phi^\omega((0,t])
=
\int_{(0,t]\times\R^d\times\R}
\partial_\lambda\phi(x,\lambda)
\,dm(\omega;r,x,\lambda)
\]
is c\`adl\`ag and has bounded variation.

The identity obtained above can be written in the sense of distributions
on $(0,T)$ as
\[
D_t\bigl(\mathscr H_\phi-M_\phi\bigr)
=
A_\phi(t,\omega)\,dt-\nu_\phi^\omega.
\]
Consequently, there exists a constant $c_\phi(\omega)$ such that
\[
\widetilde{\mathscr H}_\phi(t,\omega)
:=
c_\phi(\omega)
+
M_\phi(t,\omega)
+
\int_0^t A_\phi(r,\omega)\,dr
-
V_\phi(t,\omega)
\]
is a representative of $\mathscr H_\phi(\cdot,\omega)$. Since
$M_\phi$ is continuous, $A_\phi(\cdot,\omega)\in L^1(0,T)$, and
$V_\phi$ is c\`adl\`ag, this representative is c\`adl\`ag.

Moreover, for every $0<s<t<T$,
\[
V_\phi(t,\omega)-V_\phi(s,\omega)
=
\int_{(s,t]\times\R^d\times\R}
\partial_\lambda\phi(x,\lambda)
\,dm(\omega;r,x,\lambda).
\]
Therefore, the preceding representation gives exactly
\eqref{eq:time-integrated}. We henceforth denote this representative again
by $\mathscr H_\phi$.

Finally, since $\omega\in\Omega_m$,
\[
m\bigl(
\omega;
(0,T)\times\supp\phi
\bigr)
<\infty.
\]
It follows that
\[
V_\phi(t,\omega)\longrightarrow0
\qquad\text{as }t\downarrow0.
\]
Furthermore,
\[
M_\phi(t,\omega)\longrightarrow0
\]
by continuity at zero, while
\[
\int_0^t A_\phi(r,\omega)\,dr\longrightarrow0
\]
because $A_\phi(\cdot,\omega)\in L^1(0,T)$. Hence
\[
\lim_{t\downarrow0}
\mathscr H_\phi(t,\omega)
=
c_\phi(\omega),
\]
which proves the asserted finite right limit and therefore the essential
limit.
\end{proof}

We next construct the initial weak trace from the scalar limits supplied by
Lemma~\ref{lem:time-integrated}.
{The construction uses no measurable selection of $h_0^\omega$ and no
	weak-$*$ compactness on the product set 
	$\Omega\times\R^d\times(-L,L)$.}
    
\begin{lemma}[Pathwise weak kinetic trace]
\label{lem:weak-trace}
Recall that $M$ is the uniform bound for $u$ and that $L>M+1$ is the fixed
kinetic cutoff chosen before Definition~\ref{def:kinetic-measure}.
There exists a set
$$
\Omega_{\mathrm w}\subset\Omega,
\qquad
\Prob(\Omega_{\mathrm w})=1,
$$
such that, for every $\omega\in\Omega_{\mathrm w}$, there is a unique
function
$$
h_0^\omega\in
L^\infty(\R^d\times\R),
\qquad
0\leq h_0^\omega\leq1,
$$
with
$$
h_0^\omega(x,\lambda)=1
\quad\text{for }\lambda\leq-L,
\qquad
h_0^\omega(x,\lambda)=0
\quad\text{for }\lambda\geq L,
$$
and such that, for every
$$
\varphi\in C_c^1(\R^d\times\R),
$$
\begin{equation}\label{eq:pathwise-weak-trace}
	\esslim_{t\downarrow0}
	\int_{\R^d}\int_{\R}
	\bigl(
	h(\omega,t,x,\lambda)-h_0^\omega(x,\lambda)
	\bigr)
	\varphi(x,\lambda)
	\,d\lambda\,dx
	=
	0.
\end{equation}
No measurability of the map
$\omega\mapsto h_0^\omega$ is asserted or used.
\end{lemma}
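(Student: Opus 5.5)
The plan is to build $h_0^\omega$ from the scalar limits supplied by Lemma~\ref{lem:time-integrated} along a countable dense family, and then to identify the limit functional as integration against a bounded function by a density argument.

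\textbf{Step 1 (common null set).} Fix a countable set $\mathcal D\subset C_c^1(\R^d\times(-L,L))$ that is dense in $C_c(\R^d\times(-L,L))$ in the following sense: for every $\varphi\in C_c(\R^d\times(-L,L))$ and every $\varepsilon>0$ there is $\phi\in\mathcal D$ supported in a fixed compact neighbourhood of $\supp\varphi$ with $\norm{\varphi-\phi}_\infty<\varepsilon$. Polynomials with rational coefficients multiplied by rational cutoffs give such a family. Set
\[
\Omega_{\mathrm w}:=\bigcap_{\phi\in\mathcal D}\Omega_\phi .
\]
This set has full probability. For $\omega\in\Omega_{\mathrm w}$, define $\Lambda^\omega(\phi):=\esslim_{t\downarrow0}\mathscr H_\phi(t,\omega)$ for $\phi\in\mathcal D$.

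Since $0\le h\le1$, we have $\abs{\mathscr H_\phi(t,\omega)}\le\norm{\phi}_{L^1}$ for a.e.\ $t$. Moreover, $\mathscr H_\phi-\mathscr H_\psi=\mathscr H_{\phi-\psi}$ holds for a.e.\ $t$, since the c\`adl\`ag representatives agree a.e. Hence
\[
\abs{\Lambda^\omega(\phi)-\Lambda^\omega(\psi)}\le\norm{\phi-\psi}_{L^1}.
\]
Linearity over rational combinations holds in the same way, and $\Lambda^\omega(\phi)\ge0$ whenever $\phi\ge0$. Therefore $\Lambda^\omega$ extends uniquely to a positive linear functional on $C_c(\R^d\times(-L,L))$ that is bounded in the $L^1$ norm. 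By the Riesz--Radon--Nikodym representation, or simply by $L^1$--$L^\infty$ duality, there is $g^\omega\in L^\infty$ with $0\le g^\omega\le1$ such that $\Lambda^\omega(\phi)=\iint g^\omega\phi$.

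\textbf{Step 2 (extending to all test functions).} For a general $\varphi\in C_c^1(\R^d\times(-L,L))$, approximate $\varphi$ by some $\phi\in\mathcal D$ in $L^1$ with uniformly bounded support. The uniform bound $\abs{\int(h-g^\omega)(\varphi-\phi)}\le 2\norm{\varphi-\phi}_{L^1}$ then transfers the essential limit. Here we use that $\esslim$ is stable under uniform perturbations: the essential supremum over $(0,\delta)$ of the error is bounded by $2\varepsilon$ plus the term for $\phi$.

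Set $h_0^\omega:=g^\omega$ on $\R^d\times(-L,L)$, $h_0^\omega:=1$ for $\lambda\le-L$, and $h_0^\omega:=0$ for $\lambda\ge L$. For $\varphi\in C_c^1(\R^d\times\R)$, split $\varphi=\varphi\chi+\varphi(1-\chi)$, where $\chi(\lambda)$ is a cutoff equal to $1$ on $[-M-\tfrac12,M+\tfrac12]$ and supported in $(-L,L)$. This is possible because $L>M+1$.

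On the support of $1-\chi$ we have $\abs{\lambda}>M+\tfrac12$. There $h(\omega,t,x,\lambda)=\1_{\{\lambda<-M-1/2\}}$ exactly, because $\abs{u}\le M$ everywhere after the truncation. It remains to check that $h_0^\omega$ agrees with this on $(-L,-M-\tfrac12)\cup(M+\tfrac12,L)$. This follows from Step 1 applied to test functions supported there: on such a region $h$ is constant in $t$, so $g^\omega$ equals the same indicator almost everywhere. Hence the $(1-\chi)$ part has zero error for every $t$.

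\textbf{Step 3 (uniqueness).} If two functions both satisfy \eqref{eq:pathwise-weak-trace}, then $\iint(h_0-\tilde h_0)\varphi=0$ for all $\varphi\in C_c^1(\R^d\times\R)$, so $h_0=\tilde h_0$ almost everywhere.

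The main delicate point is Step 1: ensuring that a single full-probability set serves all test functions. This is why only countably many $\Omega_\phi$ are intersected and the extension is done by the deterministic $L^1$-Lipschitz bound. One must also check that a.e.-in-$t$ identities of representatives suffice for essential limits, which they do because $\esslim$ ignores null sets of times. No measurability in $\omega$ is needed, since every step after fixing $\omega\in\Omega_{\mathrm w}$ is deterministic.
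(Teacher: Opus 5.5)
Your proposal is correct, but it constructs $h_0^\omega$ by a different mechanism than the paper.

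The paper's route is compactness-based. It fixes $\omega$ and takes sequences $t_n\downarrow0$ in a full-measure time set. By Banach--Alaoglu it extracts weak-$*$ convergent subsequences in $L^\infty(Q_j)$, $Q_j=U_j\times(-L,L)$. Every accumulation point has the pairings $\esslim_{t\downarrow0}\mathscr H_\varphi(t,\omega)$ against an $L^1$-dense rational vector space $\mathcal D_j$, so the accumulation point is unique. A contradiction argument then gives essential weak-$*$ convergence of the whole family, and the local limits $h_{0,j}^\omega$ are patched along the exhaustion.

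You bypass compactness altogether. You define the limit functional $\Lambda^\omega$ on the countable family and extend it using the deterministic bound $\abs{\Lambda^\omega(\phi)-\Lambda^\omega(\psi)}\leq\norm{\phi-\psi}_{L^1}$, inherited from $0\leq h\leq1$. You represent it by $L^1$--$L^\infty$ duality on the $\sigma$-finite space $\R^d\times(-L,L)$, and transfer the essential limit by an $\varepsilon$-approximation.

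Since your bound is global, you need neither the accumulation-point/contradiction step nor the patching over $U_j$. Your argument is therefore shorter and more elementary. The paper's version is the standard compactness argument, organized locally on the cylinders $Q_j$ where its determining families live.

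Your handling of $\lambda\notin(-L,L)$ through a smooth cutoff $\chi$ is a little more roundabout than the paper's device of restricting $\varphi$ to $Q_j$ as an $L^1$ test function. In exchange, it already yields $h_0^\omega=1$ on $(-L,-M-\tfrac12)$ and $h_0^\omega=0$ on $(M+\tfrac12,L)$; the paper checks this separately only in Section~\ref{sec:pathwise-identification}.

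One minor imprecision: for $\phi-\psi\notin\mathcal D$, no c\`adl\`ag representative of $\mathscr H_{\phi-\psi}$ is provided on $\Omega_{\mathrm w}$. You should therefore derive the Lipschitz bound and the linearity of $\Lambda^\omega$ from the raw pairings $t\mapsto\iint h(\omega,t,x,\lambda)\varphi(x,\lambda)\,d\lambda\,dx$. These are defined and linear in $\varphi$ for every $t$, and agree with the representatives for almost every $t$. Alternatively, simply replace $\mathcal D$ by its rational span.
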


\begin{proof}
Choose an exhaustion
$$
U_1\Subset U_2\Subset\cdots\Subset\R^d,
\qquad
\bigcup_{j\geq1}U_j=\R^d,
$$
and, for every $j$, choose a countable vector space
$$
\cD_j
\subset
C_c^1(U_j\times(-L,L))
$$
over $\mathbb Q$, dense in
$
L^1(U_j\times(-L,L)).
$
With the events $\Omega_\varphi$ from
Lemma~\ref{lem:time-integrated}, define
\begin{equation}\label{eq:weak-trace-event}
	\Omega_{\mathrm w}
	:=\Omega_m
	\cap
	\bigcap_{\varphi\in\bigcup_{j\geq1}\cD_j}
	\Omega_\varphi .
\end{equation}
The determining family is countable, so
$\Prob(\Omega_{\mathrm w})=1$.  

For every
$\omega\in\Omega_{\mathrm w}$, all scalar pairings
$$
\mathscr H_\varphi(t,\omega)
=
\int_{\R^d}\int_{\R}
h(\omega,t,x,\lambda)\varphi(x,\lambda)
\,d\lambda\,dx,
\qquad
\varphi\in\bigcup_{j\geq1}\cD_j,
$$
have finite essential limits at $t=0$.  The time-integrated identity also
holds on this set for the same determining family; that is,
\eqref{eq:time-integrated} holds on $\Omega_{\mathrm w}$ 
{simultaneously for
every $\varphi\in\bigcup_{j\geq1}\cD_j$.}

Fix $\omega\in\Omega_{\mathrm w}$ and $j\geq1$, and write
$Q_j:=U_j\times(-L,L)$. Choose a full-measure set
$E_\omega\subset(0,T)$ such that $h(\omega,t,\cdot,\cdot)$ is defined
for $t\in E_\omega$ and, for
every $\varphi\in\bigcup_{j\geq1}\cD_j$,
$\mathscr H_\varphi(t,\omega)$ equals the c\`adl\`ag representative supplied by
Lemma~\ref{lem:time-integrated}.

Let $t_n\in E_\omega$ with $t_n\downarrow0$. Since
$0\leq h(\omega,t_n,x,\lambda)\leq1$,
weak-$*$ sequential compactness of the unit ball of $L^\infty(Q_j)$ gives a
subsequence, not relabeled, and a function
$h_{0,j}^\omega\in L^\infty(Q_j)$ such that
$$
h(\omega,t_n,\cdot,\cdot)
\overset{*}{\rightharpoonup}
h_{0,j}^\omega
\quad\text{in }L^\infty(Q_j).
$$
Moreover, $0\leq h_{0,j}^\omega\leq1$ almost everywhere on $Q_j$.
For every $\varphi\in\cD_j$,
$$
\int_{Q_j}
h_{0,j}^\omega(x,\lambda)\varphi(x,\lambda)
\,d\lambda\,dx
=
\esslim_{t\downarrow0}
\mathscr H_\varphi(t,\omega).
$$
The right-hand side {exists by Lemma~\ref{lem:time-integrated} and it} 
is independent of both the sequence $(t_n)$ and the
extracted subsequence.  Hence, every weak-$*$ accumulation point has the same
pairings against $\cD_j$.  Since $\cD_j$ is dense in
$L^1(Q_j)$, the weak-$*$ accumulation point is unique.

This uniqueness implies the essential weak-$*$ convergence of the whole family
on $Q_j$.  Indeed, if the convergence failed, then, using the uniform
$L^\infty$-bound and the density of $\cD_j$, one could find
$\varphi\in\cD_j$, $\varepsilon>0$, and a sequence
$t_n\in E_\omega$, $t_n\downarrow0$, such that
$$
\abs{
\int_{Q_j}
\bigl(
h(\omega,t_n,x,\lambda)-h_{0,j}^\omega(x,\lambda)
\bigr)
\varphi(x,\lambda)
\,d\lambda\,dx
}
\geq\varepsilon.
$$
Weak-$*$ compactness would then produce a different accumulation point, which
is impossible.

The uniform bound
$0\leq h(\omega,t,\cdot,\cdot),h_{0,j}^\omega\leq1$ and the
$L^1(Q_j)$-density of $\cD_j$ extend the convergence from $\cD_j$ to
every test function in $L^1(Q_j)$.

If $j<k$, then the restriction of $h_{0,k}^\omega$ to $Q_j$ and
$h_{0,j}^\omega$ have the same pairings against $\cD_j$.  Hence
they agree almost everywhere on $Q_j$.  After choosing representatives with
this restriction property, define
$$
h_0^\omega(x,\lambda)
:=
h_{0,j}^\omega(x,\lambda)
\quad
\text{whenever }(x,\lambda)\in Q_j.
$$
The restriction identity makes this definition independent of $j$ outside
one null set.  It gives
$
h_0^\omega\in L^\infty(\R^d\times(-L,L))
$
with $0\leq h_0^\omega\leq1$.
Extend it by setting $h_0^\omega(x,\lambda)=1$ for $\lambda\leq-L$
and $h_0^\omega(x,\lambda)=0$ for $\lambda\geq L$.
Because $\abs{u}\leq M<L$, the same identities hold for
$h(\omega,t,x,\lambda)$ outside $(-L,L)$.  The convergence already proved
on each $Q_j$, applied to the restriction of an arbitrary compactly
supported test function to $Q_j$, therefore proves
\eqref{eq:pathwise-weak-trace} on the full kinetic line.  If
$\widehat h_0^\omega$ also satisfies \eqref{eq:pathwise-weak-trace}, then
$h_0^\omega-\widehat h_0^\omega$ has zero pairing with every
$\varphi\in\cD_j$.  Density gives equality almost everywhere on every
$Q_j$; testing below $-L$ and above $L$ gives the same constant values
there.  Hence the trace is unique on $\R^d\times\R$.
\end{proof}

The next lemma incorporates this weak trace as the boundary term at $t=0$ in
the kinetic identity \eqref{eq:kinetic-weak}.

\begin{lemma}[Pathwise kinetic identity including the weak trace]
\label{lem:kinetic-with-trace}
For every test function
\[
\varphi
\in
C_c^1\bigl([0,T)\times\R^d\times(-L,L)\bigr),
\]
there exists a set
\[
\Omega_{\mathrm{tr},\varphi}
\subset
\Omega_{\mathrm w},
\qquad
\Prob(\Omega_{\mathrm{tr},\varphi})=1,
\]
such that, for every
$\omega\in\Omega_{\mathrm{tr},\varphi}$, the identity
\begin{equation}\label{eq:kinetic-with-trace}
\begin{aligned}
	&\int_0^T
	\int_{\R^d}
	\int_{\R}
	h(\omega,t,x,\lambda)
	\bigl(
	\partial_t\varphi(t,x,\lambda)
	+
	a(\lambda)\cdot\nabla_x\varphi(t,x,\lambda)
	\bigr)
	\,d\lambda\,dx\,dt
	\\
	&\quad+
	\int_{\R^d}
	\int_{\R}
	h_0^\omega(x,\lambda)
	\varphi(0,x,\lambda)
	\,d\lambda\,dx
	\\
	&=
	-\sum_{\ell\geq1}
	\int_0^T
	\left(
	\int_{\R^d}
	b_\ell(t,x,u(\omega,t,x))
	\varphi(t,x,u(\omega,t,x))
	\,dx
	\right)
	dW_\ell(t)
	\\
	&\quad-
	\frac12
	\int_0^T
	\int_{\R^d}
	G^2(t,x,u(\omega,t,x))
	\partial_\lambda\varphi(t,x,u(\omega,t,x))
	\,dx\,dt
	\\
	&\quad+
	\int_{(0,T)\times\R^d\times\R}
	\partial_\lambda\varphi(t,x,\lambda)
	\,dm(\omega;t,x,\lambda)
\end{aligned}
\end{equation}
holds. Moreover, for every prescribed countable family
\[
\mathscr T
\subset
C_c^1\bigl([0,T)\times\R^d\times(-L,L)\bigr),
\]
the event
\[
\Omega_{\mathrm{tr},\mathscr T}
:=
\bigcap_{\varphi\in\mathscr T}
\Omega_{\mathrm{tr},\varphi}
\]
is a full-probability subset of $\Omega_{\mathrm w}$ on which
\eqref{eq:kinetic-with-trace} holds simultaneously for every
$\varphi\in\mathscr T$.
\end{lemma}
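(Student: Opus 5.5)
The plan is to apply \eqref{eq:kinetic-weak} to the truncated test function $\varphi_\varepsilon(t,x,\lambda):=\chi_\varepsilon(t)\varphi(t,x,\lambda)$. Here $\chi_\varepsilon\in C^1([0,T))$ vanishes on $[0,\varepsilon]$, equals $1$ on $[2\varepsilon,T)$, satisfies $0\le\chi_\varepsilon\le1$, and has $\chi_\varepsilon'=\varepsilon^{-1}\rho(t/\varepsilon)$ for a fixed nonnegative $\rho\in C_c((1,2))$ of unit mass. We then let $\varepsilon=1/n\downarrow0$ along a countable sequence. Since $\varphi_{1/n}\in C_c^1((0,T)\times\R^d\times(-L,L))$, the set
\[
\Omega_{\mathrm w}\cap\bigcap_{n}\{\eqref{eq:kinetic-weak}\text{ holds for }\varphi_{1/n}\}
\]
has full probability. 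I will further intersect it with the event on which the stochastic integrals converge, as explained below, and call the result $\Omega_{\mathrm{tr},\varphi}$. For a countable family $\mathscr T$ the second assertion is then just a countable intersection.

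The terms are handled one at a time. The left side splits as
\[
\int h\,\chi_{1/n}(\partial_t\varphi+a\cdot\nabla_x\varphi)+\int_0^T\chi_{1/n}'(t)\int\!\!\int h\varphi\,d\lambda\,dx\,dt.
\]
The first piece converges to the full integral by dominated convergence, because $h$ is bounded and $\varphi$ has compact support. The It\^o correction converges the same way, using \eqref{eq:noise-local-bound} on $\supp\varphi$. The kinetic-measure term converges by dominated convergence with respect to $m(\omega)$, since $m(\omega;(0,T)\times\supp\varphi)<\infty$ for $\omega\in\Omega_m$. Note that $\chi_{1/n}\to\1_{(0,T)}$ pointwise, which is all that is needed for the measure: no mass sits at $t=0$ because $m$ lives on $(0,T)$. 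For the boundary term, write
\[
\int\!\!\int h(t)\varphi(t)\,d\lambda\,dx=\int\!\!\int h(t)\varphi(0)\,d\lambda\,dx+O(t).
\]
The $O(t)$ part contributes $O(1/n)$ against $\chi'_{1/n}$. For the main part, Lemma~\ref{lem:weak-trace} gives that $\int\!\!\int h(\omega,t)\varphi(0)\to\int\!\!\int h_0^\omega\varphi(0)$ in the essential-limit sense. Since $\chi'_{1/n}$ is a probability density concentrating in $(1/n,2/n)$, the averaged quantity converges to the trace pairing, and this yields the $h_0^\omega$ term with the stated sign.

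The main obstacle is the stochastic integral, because a.s.\ convergence for a fixed sequence must hold on one full-probability set. Write
\[
I_n:=\sum_\ell\int_0^T\chi_{1/n}(t)\beta_\ell(t)\,dW_\ell,\qquad \beta_\ell(t):=\int b_\ell(t,x,u)\varphi(t,x,u)\,dx.
\]
By the It\^o isometry,
\[
\E\abs{I_n-I}^2=\E\int_0^T(1-\chi_{1/n})^2\sum_\ell\beta_\ell^2\,dt\le C/n,
\]
where $C$ comes from \eqref{eq:noise-local-bound}. This gives $L^2$ convergence but not a.s.\ convergence along the whole sequence. I would therefore either pass to the subsequence $n_k=k^2$, for which $\sum_k C/k^2<\infty$ and Borel--Cantelli gives a.s.\ convergence, or directly use the pathwise representation: integrate by parts with a continuous martingale $N(t):=\sum_\ell\int_0^t\beta_\ell\,dW_\ell$, so that $I_n=N(T^-)-\int\chi'_{1/n}N\,dt$, and $\int\chi'_{1/n}N\,dt\to N(0)=0$ by continuity. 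The latter is cleaner, and it mirrors the proof of Lemma~\ref{lem:time-integrated}. Here one must check that the integration-by-parts identity holds a.s.\ simultaneously for all $n$; this follows from the countable family and the fact that $\varphi$ vanishes near $t=T$. Adding the full-probability event where $N$ is continuous to $\Omega_{\mathrm{tr},\varphi}$ completes the identity \eqref{eq:kinetic-with-trace}.
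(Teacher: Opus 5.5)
Your proposal is correct and follows essentially the same route as the paper. Both arguments apply a $C^1$ time cutoff vanishing near $t=0$ along a countable sequence, and use dominated convergence for the transport, It\^o-correction, and kinetic-measure terms (via $\Omega_{\mathrm w}\subset\Omega_m$). Both freeze $\varphi$ at $t=0$ to invoke the weak trace for the $\chi'_{1/n}$ term, and handle the stochastic term by pathwise integration by parts against a continuous version of the scalar martingale $M_\varphi$. Your Borel--Cantelli alternative along $n_k=k^2$ would also work, but the integration-by-parts argument you settle on is the one the paper uses.
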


\begin{proof}
Fix
\[
\varphi
\in
C_c^1\bigl([0,T)\times\R^d\times(-L,L)\bigr)
\]
and define the associated scalar martingale by
\begin{equation}\label{eq:time-dependent-scalar-martingale}
M_\varphi(t,\omega)
:=
\sum_{\ell\geq1}
\int_0^t
\left(
\int_{\R^d}
b_\ell(r,x,u(\omega,r,x))
\varphi(r,x,u(\omega,r,x))
\,dx
\right)
dW_\ell(r).
\end{equation}
By \eqref{eq:noise-local-bound}, $M_\varphi$ admits a continuous
version on $[0,T]$ starting from zero. If
\[
\varphi(t,x,\lambda)=\phi(x,\lambda)
\]
is independent of time, then $M_\varphi=M_\phi$, where $M_\phi$ is
defined in \eqref{eq:scalar-martingale-pairing}.

Fix a deterministic sequence $\delta_n\downarrow0$ with
$0<\delta_n<T$. For every $n$, choose a nondecreasing function
\[
\theta_n\in C^1([0,T])
\]
such that
\[
0\leq\theta_n\leq1,
\qquad
\theta_n(t)=0
\quad\text{for }0\leq t\leq\frac{\delta_n}{2},
\qquad
\theta_n(t)=1
\quad\text{for }t\geq\delta_n.
\]
In particular,
\[
\theta_n'\geq0,
\qquad
\supp\theta_n'
\subset
\left[\frac{\delta_n}{2},\delta_n\right],
\qquad
\int_0^T\theta_n'(t)\,dt=1.
\]

Choose
\[
\Omega_{\mathrm{tr},\varphi}
\subset
\Omega_{\mathrm w}
\]
of probability one such that the chosen continuous version of
$M_\varphi$ is fixed and \eqref{eq:kinetic-weak} holds for every member
of the countable family
\(
(\theta_n\varphi)_{n\geq1}.
\)
Fix $\omega\in\Omega_{\mathrm{tr},\varphi}$.
Use
\[
\theta_n(t)\varphi(t,x,\lambda)
\]
as a test function in \eqref{eq:kinetic-weak}. Since
$\theta_n(t)\to1$ for every $t>0$, dominated convergence applies to
the terms containing
\[
\theta_n\partial_t\varphi,
\qquad
\theta_n a(\lambda)\cdot\nabla_x\varphi,
\qquad
\theta_n G^2\partial_\lambda\varphi.
\]
Furthermore, since
\[
\Omega_{\mathrm{tr},\varphi}
\subset
\Omega_{\mathrm w}
\subset
\Omega_m,
\]
the local finiteness property \eqref{eq:local-mass-event} and dominated
convergence give convergence of the kinetic-measure term.
The stochastic term is
\[
\int_0^T
\theta_n(t)\,dM_\varphi(t,\omega).
\]
Integration by parts gives
\[
\int_0^T
\theta_n(t)\,dM_\varphi(t,\omega)
=
M_\varphi(T,\omega)
-
\int_0^T
M_\varphi(t,\omega)\theta_n'(t)\,dt.
\]
Since $\theta_n'\geq0$,
$\supp\theta_n'\subset[\delta_n/2,\delta_n]$, and
$\int_0^T\theta_n'(t)\,dt=1$, we obtain
\[
\left|
\int_0^T
M_\varphi(t,\omega)\theta_n'(t)\,dt
\right|
\leq
\sup_{0\leq t\leq\delta_n}
\left|
M_\varphi(t,\omega)
\right|
\longrightarrow0
\]
by continuity of $M_\varphi(\cdot,\omega)$ at zero. Hence
\[
\int_0^T
\theta_n(t)\,dM_\varphi(t,\omega)
\longrightarrow
M_\varphi(T,\omega),
\]
which is the stochastic integral appearing in
\eqref{eq:kinetic-with-trace}.
It remains to identify the term generated by $\theta_n'$. Set
\[
B_n
:=
\int_0^T
\theta_n'(t)
\int_{\R^d}\int_{\R}
h(\omega,t,x,\lambda)
\varphi(t,x,\lambda)
\,d\lambda\,dx\,dt.
\]
To apply the weak-trace property, freeze the test function at $t=0$
and define
\[
F(t)
:=
\int_{\R^d}\int_{\R}
h(\omega,t,x,\lambda)
\varphi(0,x,\lambda)
\,d\lambda\,dx
\]
and
\[
F_0
:=
\int_{\R^d}\int_{\R}
h_0^\omega(x,\lambda)
\varphi(0,x,\lambda)
\,d\lambda\,dx.
\]
Lemma~\ref{lem:weak-trace}, applied to the fixed test function
$\varphi(0,\cdot,\cdot)$, gives
\[
\esslim_{t\downarrow0}F(t)=F_0.
\]

Using $0\leq h\leq1$ and $\int_0^T\theta_n'(t)\,dt=1$, we obtain
\[
\begin{aligned}
\left|B_n-F_0\right|
&\leq
\int_0^T
\theta_n'(t)
\left|F(t)-F_0\right|
\,dt
\\
&\quad+
\int_0^T
\theta_n'(t)
\int_{\R^d}\int_{\R}
h(\omega,t,x,\lambda)
\left|
\varphi(t,x,\lambda)-\varphi(0,x,\lambda)
\right|
\,d\lambda\,dx\,dt
\\
&\leq
\operatorname*{ess\,sup}_{0<t<\delta_n}
\left|F(t)-F_0\right|
\\
&\quad+
\sup_{0<t<\delta_n}
\norm{
\varphi(t,\cdot,\cdot)
-
\varphi(0,\cdot,\cdot)
}_{L^1(\R^d\times\R)}.
\end{aligned}
\]
The first term tends to zero by the weak-trace property, while the
second tends to zero by the continuity of $\varphi$ at $t=0$.
Consequently,
\[
B_n\longrightarrow F_0.
\]

Passing to the limit in \eqref{eq:kinetic-weak}, tested with
$\theta_n\varphi$, now gives \eqref{eq:kinetic-with-trace}.

Finally, if $\mathscr T$ is a prescribed countable family of test
functions, then
\[
\Omega_{\mathrm{tr},\mathscr T}
=
\bigcap_{\varphi\in\mathscr T}
\Omega_{\mathrm{tr},\varphi}
\]
has probability one, and \eqref{eq:kinetic-with-trace} holds on this
event simultaneously for every $\varphi\in\mathscr T$.
\end{proof}

The next well-known lemma (see, e.g., \cite{Debussche:2010fk})
upgrades weak convergence to strong convergence once the weak limit is
known to be a subgraph function.
\begin{lemma}[Rigidity of subgraph functions]
\label{lem:rigidity-subgraph}
Let $(X,\mu)$ be a finite measure space, let
$$
v_n:X\longrightarrow[-M,M],
$$
and define
$$
g_n(x,\lambda)
:=
\1_{\{\lambda<v_n(x)\}}
\quad\text{on }X\times(-L,L),
$$ {where $L>M+1$.}
Assume that
$$
g_n\overset{*}{\rightharpoonup}g
\quad\text{in }
L^\infty(X\times(-L,L)).
$$
Then the following statements are equivalent:
\begin{enumerate}[label=\textup{(\roman*)}]
	\item there exists a measurable function
	$$
	v:X\longrightarrow[-M,M]
	$$
	such that
	$$
	g(x,\lambda)
	=
	\1_{\{\lambda<v(x)\}}
	$$
	almost everywhere;

	\item
	$$
	g_n\longrightarrow g
	\quad\text{strongly in }
	L^1(X\times(-L,L)).
	$$
\end{enumerate}
Whenever these conditions hold,
$$
\norm{g_n-g}_{L^1(X\times(-L,L))}
=
\norm{v_n-v}_{L^1(X)}.
$$
In particular,
$$
v_n\longrightarrow v
\quad\text{strongly in }L^1(X).
$$
\end{lemma}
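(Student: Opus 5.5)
The approach rests on the pointwise identity $\int_{-L}^{L}\abs{\1_{\{\lambda<a\}}-\1_{\{\lambda<b\}}}\,d\lambda=\abs{a-b}$ for $a,b\in[-M,M]$. We first prove the norm identity and then the two implications.

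\textbf{Norm identity.} Assume (i). Since $g_n$ and $g$ are both subgraph indicators, $\abs{g_n-g}$ is the indicator of the $\lambda$-interval between $v_n(x)$ and $v(x)$. This interval lies in $[-M,M]\subset(-L,L)$. Integrating first in $\lambda$ and then over $X$ (Tonelli) gives $\norm{g_n-g}_{L^1}=\norm{v_n-v}_{L^1(X)}$.

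\textbf{(ii)$\Rightarrow$(i).} Strong $L^1$ convergence gives a subsequence with $g_{n_k}\to g$ almost everywhere on $X\times(-L,L)$. Each $g_{n_k}$ takes values in $\{0,1\}$, so $g\in\{0,1\}$ almost everywhere. Moreover, $\lambda\mapsto g_{n_k}(x,\lambda)$ is nonincreasing, equals $1$ for $\lambda<-M$ and $0$ for $\lambda\geq M$. By Fubini, for almost every $x$ the section $g(x,\cdot)$ is an almost-everywhere limit of such functions, so it agrees almost everywhere with a nonincreasing $\{0,1\}$-valued function carrying the same boundary values. Hence $g(x,\cdot)=\1_{\{\lambda<v(x)\}}$ almost everywhere, where
\[
v(x):=-L+\int_{-L}^{L}g(x,\lambda)\,d\lambda .
\]
This $v$ is measurable by Fubini, and the boundary values force $v(x)\in[-M,M]$.

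\textbf{(i)$\Rightarrow$(ii).} This is the key step. Since $0\le g_n,g\le 1$ and both are $\{0,1\}$-valued,
\[
\abs{g_n-g}=g_n(1-g)+g(1-g_n)=g_n+g-2g_ng=g_n-2g_ng+g^2 .
\]
Integrating over the finite measure space $X\times(-L,L)$, we pass to the limit using weak-$*$ convergence tested against $1\in L^1$ and against $g\in L^1$, both of which are legitimate because the measure is finite. This gives
\[
\norm{g_n-g}_{L^1}\to\int\bigl(g-2g^2+g^2\bigr)=\int(g-g^2)=0,
\]
where the last equality uses $g^2=g$.

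\textbf{Conclusion.} The norm identity then yields $v_n\to v$ in $L^1(X)$. The only slightly delicate point is the monotone-limit argument in (ii)$\Rightarrow$(i), which needs a Fubini selection of good $x$. A simpler alternative there is to define $v$ by the $\lambda$-integral formula above and then verify $g=\1_{\{\lambda<v\}}$ by passing to the limit in $\int_{-L}^{L}\abs{g_{n_k}(x,\lambda)-g(x,\lambda)}\,d\lambda\to0$ for almost every $x$.
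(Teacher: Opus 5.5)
Your proof is correct. For (i)$\Rightarrow$(ii) you follow the paper's argument. The paper notes $\norm{g_n}_{L^2}^2=\int g_n\to\int g=\norm{g}_{L^2}^2$ and combines weak convergence with norm convergence in $L^2$. Since all functions involved are $\{0,1\}$-valued, $\abs{g_n-g}=\abs{g_n-g}^2$, so your expansion $\int(g_n-2g_ng+g^2)$ is just $\norm{g_n-g}_{L^2}^2$ written out. It is the same computation the paper uses in the essential-time rigidity corollary.

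The real difference is in (ii)$\Rightarrow$(i). The paper never looks at $\lambda$-sections or monotonicity. It uses the layer-cake identity $\norm{v_n-v_m}_{L^1(X)}=\norm{g_n-g_m}_{L^1(X\times(-L,L))}$ to show that $(v_n)$ is Cauchy in $L^1(X)$ and takes its limit $v$. The same identity then gives $g_n\to\1_{\{\lambda<v\}}$ in $L^1$, so $g=\1_{\{\lambda<v\}}$ by uniqueness of strong limits. In effect, $v\mapsto\1_{\{\lambda<v\}}$ is an isometry on the closed set $\{\abs{v}\leq M\}\subset L^1(X)$, so its image is closed, and that is all this direction needs.

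You instead extract an almost-everywhere convergent subsequence and use that an a.e. limit of nonincreasing $\{0,1\}$-valued functions with fixed boundary values is again such a function a.e. This is valid: the Fubini selection of good $x$ works, and on the full-measure convergence set of a fixed section the threshold $\sup\{\lambda: g(x,\lambda)=1\}$ lies in $[-M,M]$ and agrees with $-L+\int_{-L}^L g(x,\lambda)\,d\lambda$. Your route has the advantage of producing $v$ directly by an explicit formula. The cost is fiber-wise measure theory that the isometry argument avoids. Your ``simpler alternative'' is essentially the paper's isometry argument carried out section by section along a subsequence.
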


\begin{proof}
Assume first that
$$
g(x,\lambda)
=
\1_{\{\lambda<v(x)\}}.
$$
{Using the assumed weak-$\ast$ convergence in 
	$L^\infty(X\times (-L,L))$ together with the fact that 
	constant functions are in $L^1(X\times (-L,L))$, we have}
$$
\norm{g_n}_{L^2(X\times(-L,L))}^2
=
\int_{X\times(-L,L)}g_n
\longrightarrow
\int_{X\times(-L,L)}g
=
\norm{g}_{L^2(X\times(-L,L))}^2.
$$
Hence
$$
g_n\longrightarrow g
\quad\text{strongly in }L^2(X\times(-L,L)),
$$
and therefore strongly in $L^1(X\times(-L,L))$.

Conversely, suppose that
$$
g_n\longrightarrow g
\quad\text{strongly in }L^1(X\times(-L,L)).
$$
For $n,m\geq1$, the subgraph identity gives
$$
\norm{v_n-v_m}_{L^1(X)}
=
\norm{g_n-g_m}_{L^1(X\times(-L,L))},
$$
{where we have used that for all $r,q\in[-M,M]$,
$$
\abs{r-q}
=
\int_{-L}^L
\abs{
	\1_{\{\lambda<r\}}
	-
	\1_{\{\lambda<q\}}
}
\,d\lambda.
$$
Thus $(v_n)$ is a Cauchy sequence} in $L^1(X)$.  Let
$v_n\to v$ in $L^1(X)$ for some measurable
$v:X\to[-M,M]$. 

{
The previous identity yields
$$
\lim_{n\to\infty} \norm{g_n-\1_{\{\lambda<v(x)\}}}_{L^1(X\times(-L,L))}
	= \lim_{n\to\infty} \norm{v_n-v}_{L^1(X)} = 0 \,.
$$
Uniqueness of the strong limit gives
$g(x,\lambda)=\1_{\{\lambda<v(x)\}}$ almost everywhere,
completing the proof. }

See also \cite[Lemma~2.3]{Frid:2022trace}.
\end{proof}

Applying Lemma~\ref{lem:rigidity-subgraph} to time sections yields the
following essential-time version.

\begin{corollary}[Essential-time rigidity]
\label{cor:essential-chi-rigidity}
Let $(X,\mu)$ be a finite measure space and let
$v:(0,T)\times X\to[-M,M]$ be measurable. For each $t\in(0,T)$, let
$g_t\in L^\infty(X\times(-L,L))$ be represented by
$g_t(x,\lambda):=\1_{\{\lambda<v(t,x)\}}$.
Suppose that
$$
\esslim_{t\downarrow0}
g_t
=
g_0
$$
weak-$*$ in $L^\infty(X\times(-L,L))$, where
$g_0(x,\lambda)=\1_{\{\lambda<v_0(x)\}}$ for almost every
$(x,\lambda)\in X\times(-L,L)$ and some measurable function
$v_0:X\to[-M,M]$.
Then
$$
\esslim_{t\downarrow0}
\norm{g_t-g_0}_{L^1(X\times(-L,L))}
=
0.
$$
Equivalently,
$$
\esslim_{t\downarrow0}
\int_X
\abs{v(t,x)-v_0(x)}
\,d\mu(x)
=
0.
$$
\end{corollary}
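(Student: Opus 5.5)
We argue by contradiction, reducing the essential-time statement to the sequential Lemma~\ref{lem:rigidity-subgraph}. Let $F(t):=\norm{g_t-g_0}_{L^1(X\times(-L,L))}$. By Fubini and the subgraph identity $\abs{r-q}=\int_{-L}^L\abs{\1_{\{\lambda<r\}}-\1_{\{\lambda<q\}}}\,d\lambda$, valid for $r,q\in[-M,M]$, we have
\[
F(t)=\int_X\abs{v(t,x)-v_0(x)}\,d\mu(x)
\]
for every $t$. This gives the asserted equivalence of the two formulations. Moreover, $F$ is a measurable function of $t$ by Tonelli, since $v$ is jointly measurable, so essential suprema of $F$ make sense.

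Suppose $\esslim_{t\downarrow0}F(t)\neq0$. Then there is $\varepsilon>0$ such that, for every $\delta>0$, the set $\{t\in(0,\delta):F(t)\geq\varepsilon\}$ has positive Lebesgue measure. The weak-$*$ essential limit hypothesis means the following: for every $\psi\in L^1(X\times(-L,L))$, the measurable function $t\mapsto\int g_t\psi$ has essential limit $\int g_0\psi$. Fix a countable set $\{\psi_k\}$ dense in $L^1(X\times(-L,L))$, which is possible if $L^1$ is separable. For each $k$ there is a null set off which the essential limit is an honest limit along $(0,\delta_{k,j})$ up to error $1/j$. Discarding the countable union $N$ of these null sets, we choose $t_n\downarrow0$ with $t_n\notin N$ and $F(t_n)\geq\varepsilon$. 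This is possible because the bad set has positive measure in every $(0,\delta)$.

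Along this sequence, $\int g_{t_n}\psi_k\to\int g_0\psi_k$ for every $k$. The uniform bound $0\leq g_{t_n}\leq1$ and density of $\{\psi_k\}$ then upgrade this to $g_{t_n}\overset{*}{\rightharpoonup}g_0$ in $L^\infty(X\times(-L,L))$. Lemma~\ref{lem:rigidity-subgraph}, applied with $v_n:=v(t_n,\cdot)$ and using that $g_0$ is a subgraph function, yields $F(t_n)\to0$, which is a contradiction.

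The only delicate point is the separability needed for the countable dense family. I would avoid it as follows. Pairing against the constant function $1$ together with the limit $\int g_{t_n}\to\int g_0$ is exactly what the first half of the rigidity proof uses. Concretely,
\[
\norm{g_{t_n}-g_0}_{L^2}^2=\int g_{t_n}-2\int g_{t_n}g_0+\int g_0,
\]
using $g^2=g$ for indicators. So it suffices to pick $t_n$ outside the two null sets associated with the test functions $\psi=1$ and $\psi=g_0$, both of which lie in $L^1$ because $\mu$ is finite. Then $\norm{g_{t_n}-g_0}_{L^2}^2\to\int g_0-2\int g_0+\int g_0=0$. Hence $F(t_n)\leq\bigl(2L\mu(X)\bigr)^{1/2}\norm{g_{t_n}-g_0}_{L^2}\to0$, contradicting $F(t_n)\geq\varepsilon$. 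This version needs no density argument at all, and I would present it as the main proof.
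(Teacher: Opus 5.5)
Your main argument is correct and is essentially the paper's proof: the paper also tests the essential weak-$*$ convergence against the two $L^1$ functions $1$ and $g_0$, and uses $g_t^2=g_t$, $g_0^2=g_0$ to write $\norm{g_t-g_0}_{L^1}=\int g_t+\int g_0-2\int g_tg_0$. The only difference is that the paper applies this identity directly in the essential-time sense, since essential limits of finitely many terms add, so neither your contradiction-and-sequence wrapper nor the separability caveat of your first variant is needed.
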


\begin{proof}
Since $0\leq g_t,g_0\leq1$ and $X\times(-L,L)$ has finite measure, the
functions $1$ and $g_0$
belong to $L^1(X\times(-L,L))$.  The essential weak-$*$ convergence therefore
gives
$$
\esslim_{t\downarrow0}
\int_{X\times(-L,L)}
g_t(x,\lambda)
\,d\mu(x)\,d\lambda
=
\int_{X\times(-L,L)}
g_0(x,\lambda)
\,d\mu(x)\,d\lambda
$$
and
$$
\esslim_{t\downarrow0}
\int_{X\times(-L,L)}
g_t(x,\lambda)g_0(x,\lambda)
\,d\mu(x)\,d\lambda
=
\int_{X\times(-L,L)}
g_0(x,\lambda)^2
\,d\mu(x)\,d\lambda.
$$
Because $g_t$ and $g_0$ take only the values zero and one,
$g_t^2=g_t$ and $g_0^2=g_0$,
and hence
\begin{align*}
	\norm{g_t-g_0}_{L^1(X\times(-L,L))}
	&=
	\int_{X\times(-L,L)}
	\abs{g_t-g_0}^2
	\,d\mu\,d\lambda
	\\
	&=
	\int_{X\times(-L,L)}
	g_t
	\,d\mu\,d\lambda
	+
	\int_{X\times(-L,L)}
	g_0
	\,d\mu\,d\lambda
	\\
	&\quad
	-
	2
	\int_{X\times(-L,L)}
	g_t g_0
	\,d\mu\,d\lambda.
\end{align*}
{Using the previously established identities, 
	we get that} the right-hand side converges to zero in 
	the essential-time sense. Therefore,
$$
\esslim_{t\downarrow0}
\norm{g_t-g_0}_{L^1(X\times(-L,L))}
=
0.
$$
Finally, the pointwise identity
$$
\abs{r-q}
=
\int_{-L}^L
\abs{
\1_{\{\lambda<r\}}
-
\1_{\{\lambda<q\}}
}
\,d\lambda
$$
gives the equivalent convergence of $v(t,\cdot)$ to $v_0$ in $L^1(X)$.
\end{proof}

%%%%%%%%%%%%%%%%%%%%%%%%%
%%%%%%%%%%%%%%%%%%%%%%%%%
\section{Panov's compactness principle}
\label{sec:panov-compactness}

We record the deterministic compactness results used for the pathwise
blow-up sequences.

Let $\mathcal O\subset\R^N$ be open, and let $(v_n)$ be bounded in
$L^\infty(\mathcal O)$. After
passing to a subsequence, let $\nu_z$ be the Young measure 
{\cite{Young:1969} (or the measure-valued function 
\cite{Panov:1994gf})} generated by
$(v_n)$.  For $p\in\R$, set
$$
U_n^p(z)
:=
\mathbf 1_{\{v_n(z)>p\}}
-
\nu_z((p,\infty)).
$$

Panov's two-parameter construction associates H-measures \cite{Tartar:1990mq}
with these centered
level functions; see \cite[{Section~5}]{Panov:2007aa} and
\cite[{Section~6}]{Panov:2005lr}.

\begin{theorem}[Panov's two-parameter H-measures]
\label{thm:panov-hmeasure}
There exist a subsequence, not relabeled, and a set
\[
P\subset\R,
\]
whose complement is at most countable, such that, for every $p\in P$,
\[
\mathbf 1_{\{v_n>p\}}
\overset{*}{\rightharpoonup}
\nu_z((p,\infty))
\qquad\text{in }L^\infty_{\loc}(\mathcal O).
\]
Consequently,
\[
U_n^p\overset{*}{\rightharpoonup}0
\qquad\text{in }L^\infty_{\loc}(\mathcal O).
\]

For every $p,q\in P$, there exists a locally finite complex Borel measure
\[
\mu^{pq}
\in
\cM_{\loc}
\bigl(
\mathcal O\times\mathbb S^{N-1}
\bigr)
\]
representing the H-measure associated with the pair
$(U_n^p,U_n^q)$. The family
\[
\{\mu^{pq}\}_{p,q\in P}
\]
is Hermitian and positive semidefinite. In particular,
\[
\mu^{pp}\geq0
\qquad\text{for every }p\in P.
\]

Moreover, the map
\[
P\times P\ni(p,q)
\longmapsto
\mu^{pq}
\in
\cM_{\loc}
\bigl(
\mathcal O\times\mathbb S^{N-1}
\bigr)
\]
is continuous with respect to the local total-variation topology.
More precisely, for every compact set
\[
K\subset\mathcal O\times\mathbb S^{N-1}
\]
and every $(p_0,q_0)\in P\times P$,
\[
\left|
\mu^{pq}-\mu^{p_0q_0}
\right|(K)
\longrightarrow0
\qquad\text{as }(p,q)\longrightarrow(p_0,q_0),
\]
where $|\mu|$ denotes the total variation of $\mu$. In particular, for
every
\[
\psi\in
C_c
\bigl(
\mathcal O\times\mathbb S^{N-1}
\bigr),
\]
the map
\[
(p,q)
\longmapsto
\int_{\mathcal O\times\mathbb S^{N-1}}
\psi(z,\xi)\,d\mu^{pq}(z,\xi)
\]
is continuous on $P\times P$.

Finally,
\[
\mu^{pp}=0
\quad\text{for every }p\in P
\]
if and only if
\[
\mu^{pq}=0
\quad\text{for every }p,q\in P.
\]
These equivalent conditions hold if and only if
\[
v_n\longrightarrow
\overline v,
\qquad
\overline v(z)
:=
\int_{\R}\lambda\,d\nu_z(\lambda),
\]
strongly in $L^1_{\loc}(\mathcal O)$.
\end{theorem}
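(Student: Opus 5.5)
The plan is to rebuild Panov's construction from Tartar's H-measure theorem \cite{Tartar:1990mq}, with one diagonal extraction. All properties are first proved for a countable dense set of levels and then carried over to all of $P$ by an $L^2$ continuity estimate in $(p,q)$.

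\textbf{Step 1: the admissible levels.} Extract a subsequence generating the Young measure $\nu_z$. Fix an exhaustion of $\mathcal O$ by compacts $K_j$. For each $j$, the map $p\mapsto\int_{K_j}\nu_z(\{p\})\,dz$ is positive only on a countable set, because the total mass on $K_j$ is finite. Let $P$ be the complement of the union of these countable sets over $j$. For $p\in P$ we have $\nu_z(\{p\})=0$ for a.e.\ $z$, so the Young-measure representation of weak limits gives
\[
\mathbf 1_{\{v_n>p\}}\overset{*}{\rightharpoonup}\nu_z((p,\infty)) .
\]
Indeed, $\mathbf 1_{(p,\infty)}$ can be approximated from above and below by continuous functions whose integrals against $\nu_z$ coincide in the limit. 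This gives $U_n^p\overset{*}{\rightharpoonup}0$.

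\textbf{Step 2: H-measures on a countable dense set, then extension.} Choose a countable dense set $D\subset P$. Apply Tartar's theorem to the countable family $(U_n^p)_{p\in D}$ and extract diagonally. This yields $\mu^{pq}$ for $p,q\in D$, and the family is Hermitian and positive semidefinite. The key estimate is the standard bound
\[
\abs{\mu^{pq}-\mu^{p'q}}(K\times\mathbb S^{N-1})\le C_K\limsup_n\norm{U_n^p-U_n^{p'}}_{L^2(K)}\norm{U_n^q}_{L^2(K)} .
\]
It holds because the H-measure is sesquilinear and is dominated by the product of $L^2$ defect norms. For $p<p'$ we have
\[
\limsup_n\norm{U_n^p-U_n^{p'}}_{L^2(K)}^2\le 4\int_K\nu_z([p,p'])\,dz ,
\]
which is obtained by testing the Young measure with continuous majorants of $\mathbf 1_{[p,p']}$. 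This bound tends to $0$ as $p'\to p\in P$, since there $\nu_z(\{p\})=0$ a.e.

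The two estimates show that $(p,q)\mapsto\mu^{pq}$ is uniformly continuous in local total variation on $D\times D$ near each point of $P\times P$. It therefore extends to $P\times P$. The same estimate shows that the extension is the H-measure of the pair $(U_n^p,U_n^q)$ along the already extracted subsequence, so no further extraction is needed. Hermitian symmetry and positive semidefiniteness pass to the limit, and in particular $\mu^{pp}\ge0$.

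\textbf{Step 3: the equivalences.} The projection of $\mu^{pp}$ onto $\mathcal O$ is the $L^2$ defect measure of $U_n^p$. Hence $\mu^{pp}=0$ if and only if $U_n^p\to0$ in $L^2_{\loc}$. Positive semidefiniteness gives the measure inequality $\abs{\mu^{pq}}\le(\mu^{pp})^{1/2}(\mu^{qq})^{1/2}$, so vanishing of all diagonal measures forces vanishing of all off-diagonal ones.

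For the link with strong convergence, use the layer-cake formula
\[
v_n=-M+\int_{-M}^{M}\mathbf 1_{\{v_n>p\}}\,dp .
\]
Since $P$ has full Lebesgue measure, strong convergence of $\mathbf 1_{\{v_n>p\}}$ for every $p\in P$ and dominated convergence in $p$ give
\[
v_n\to -M+\int_{-M}^{M}\nu_z((p,\infty))\,dp=\overline v \quad\text{in }L^1_{\loc}.
\]
Conversely, strong convergence of $v_n$ forces $\nu_z=\delta_{\overline v(z)}$ a.e. Then $\nu_z((p,\infty))=\mathbf 1_{\{\overline v>p\}}$ is an indicator function, and the argument of Lemma~\ref{lem:rigidity-subgraph} (weak limit of indicators that is itself an indicator implies strong convergence) gives $U_n^p\to0$ strongly, hence $\mu^{pp}=0$.

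\textbf{Main obstacle.} The hardest step is Step~2: extending from countably many levels to the uncountable set $P$ while staying on a single subsequence, and obtaining continuity in local total variation rather than merely weak continuity. Everything rests on the sesquilinear $L^2$ bound for H-measures combined with the absence of atoms of $\nu_z$ at levels $p\in P$. Here I would follow \cite[Section~5]{Panov:2007aa} closely.
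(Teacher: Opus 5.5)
Your proposal is correct, and it is essentially the route the paper relies on. The paper gives no proof of its own and defers to Panov's construction, which proceeds as you do: the averaged atoms $\{p:\int_{K}\nu_z(\{p\})\,dz>0\}$ form a countable set; Tartar's theorem is applied on a countable dense set of levels in $P$; and the family is extended to $P\times P$ along the same subsequence via the $L^2$ bound controlled by $\bigl(\int_{K'}\nu_z([p,p'])\,dz\bigr)^{1/2}$, with $K'$ a compact neighbourhood of $K$. The equivalences in Step~3 also match the standard argument: the defect-measure projection, the inequality $|\mu^{pq}|\le(\mu^{pp})^{1/2}(\mu^{qq})^{1/2}$, and the layer-cake formula.
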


The localization theorem below identifies the obstruction represented by
a nonzero H-measure: it forces a directional component of the flux to be
constant on a nondegenerate interval; see
\cite[Theorem~5.7]{Panov:2007aa} and
\cite[Theorem~6.7]{Panov:2005lr}.

\begin{theorem}[Panov's localization theorem]
	\label{thm:panov-localization}
	Let $\Phi\in C(\R;\R^N)$, and let $(v_n)$ be a bounded sequence in $L^\infty(\mathcal O)$. Suppose that, for every $k\in\R$, the one-sided entropy productions
	\begin{equation}\label{eq:panov-C}
	\diver_z
	\left[
	\mathbf 1_{\{v_n>k\}}
	\bigl(
	\Phi(v_n)-\Phi(k)
	\bigr)
	\right]
	\end{equation}
	are precompact in $H^{-1}_{\loc}(\mathcal O)$.
	
	If $\mu^{p_0p_0}\neq0$ for some $p_0\in P$, then one can find an open interval $D\ni p_0$ and a vector $\eta\in \mathbb S^{N-1}$ such that
	$$
	\eta\cdot\Phi(r) = \mathrm{const} \qquad \text{for every }r\in D.
	$$
	Consequently, if $r\mapsto\eta\cdot\Phi(r)$ is nonconstant on every nondegenerate interval for every $\eta\in \mathbb S^{N-1} $, then $(v_n)$ is strongly precompact in $L^1_{\loc}(\mathcal O)$.
\end{theorem}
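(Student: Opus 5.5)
The plan is to prove this via Tartar's localization principle for H-measures, applied to the centred level functions $U_n^p$, and then use the continuity and positivity in Theorem~\ref{thm:panov-hmeasure} to extract the degenerate direction $\eta$. Throughout I work with the subsequence and the set $P$ furnished by Theorem~\ref{thm:panov-hmeasure}. The steps are: a layer-cake rewriting, localization, reduction to densities, a one-dimensional argument for $\eta\cdot\Phi$, and the final consequence.

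\emph{Step 1 (layer-cake form of the entropy fluxes).} Since $\Phi$ is continuous and $v_n$ is bounded by some $R$, for every $k$ I write
$$
\mathbf 1_{\{v_n>k\}}\bigl(\Phi(v_n)-\Phi(k)\bigr)=\int_k^{R}\mathbf 1_{\{v_n>q\}}\,d\Phi(q),
$$
understood as a Riemann--Stieltjes integral. First I would do this for $\Phi$ smooth, and then pass to merely continuous $\Phi$ by uniform approximation; the approximation error is bounded in $L^\infty$ uniformly in $n$. Subtracting the weak-$*$ limit (a fixed function, whose divergence is a fixed distribution), the hypothesis gives that $\diver_z\int_k^R U_n^q\,d\Phi(q)$ is precompact in $H^{-1}_{\loc}(\mathcal O)$ for every $k\in P$.

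\emph{Step 2 (localization).} Tartar's localization principle states: if $W_n\rightharpoonup0$ with $\diver W_n$ precompact in $H^{-1}_{\loc}$, and $V_n\rightharpoonup0$ is bounded in $L^\infty$, then the H-measure $\mu_{W,V}$ satisfies $\xi\cdot\mu_{W,V}=0$. I apply this with $W_n=\int_k^R U_n^q\,d\Phi(q)$ and $V_n=U_n^r$. Using the bilinearity of H-measures and the continuity of $(p,q)\mapsto\mu^{pq}$ in local total variation (to interchange the Stieltjes integral with the H-measure), this yields, for all $k,r\in P$,
$$
\int_k^R \xi\cdot d\Phi(q)\,\mu^{qr}(z,\xi)=0
\qquad\text{as measures on }\mathcal O\times\mathbb S^{N-1}.
$$
Taking differences over $k_1<k_2$ then gives $\int_{k_1}^{k_2}\xi\cdot d\Phi(q)\,\mu^{qr}=0$.

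\emph{Step 3 (reduction to densities).} I fix a countable dense subset $P_0\subset P$ and a finite positive measure $\sigma$ (locally) dominating all $\mu^{pq}$, $p,q\in P_0$; for instance $\sigma=\sum 2^{-i}|\mu^{p_iq_i}|$, localized. I write $\mu^{pq}=h^{pq}\sigma$. By the total-variation continuity, $q\mapsto h^{qr}$ extends continuously into $L^1(\sigma)$ near $p_0$. Since $\mu^{p_0p_0}\neq0$ and $\mu^{p_0p_0}\ge0$, the set where $h^{p_0p_0}>0$ has positive $\sigma$-measure. On that set I choose a $\sigma$-typical point $(z,\xi)$, and an interval $D\ni p_0$ on which $q\mapsto h^{qp_0}(z,\xi)$ stays positive, after averaging over small $\sigma$-neighbourhoods to make this pointwise statement meaningful.

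\emph{Step 4 (the one-dimensional argument).} Putting $r=p_0$ in Step 2, the distribution $d(\xi\cdot\Phi)$ multiplied by a positive continuous function integrates to zero over every subinterval of $D$. Hence $d(\xi\cdot\Phi)=0$ on $D$, that is, $\xi\cdot\Phi$ is constant on $D$, and I take $\eta=\xi$.

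\emph{Main obstacle.} The hardest part is Steps 3--4: justifying the pointwise-in-$(z,\xi)$ Stieltjes identity when $\Phi$ is only continuous and $q\mapsto h^{qr}$ is only continuous in $L^1(\sigma)$. My first approach would be to integrate by parts in $q$ against smooth cutoffs $\rho(q)$ supported in $D$. This turns the condition into $\int(\xi\cdot\Phi(q))\,\rho'(q)\,h^{qp_0}\,dq\,d\sigma=0$ (up to lower-order terms), which involves only integrals and no derivatives of $\Phi$. I would then use a countable family of such $\rho$ and a Lebesgue point argument to localize in $(z,\xi)$.

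\emph{Consequence.} Suppose no direction $\eta\in\mathbb S^{N-1}$ makes $\eta\cdot\Phi$ constant on a nondegenerate interval. Then $\mu^{pp}=0$ for every $p\in P$. By the final part of Theorem~\ref{thm:panov-hmeasure}, the extracted subsequence converges strongly in $L^1_{\loc}(\mathcal O)$. Since this applies to every subsequence of $(v_n)$, the sequence $(v_n)$ is strongly precompact in $L^1_{\loc}(\mathcal O)$.
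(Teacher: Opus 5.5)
The paper gives no proof of this theorem; it imports it from Panov, so your outline has to carry the whole argument. Steps~1, 2 and the final consequence are fine in outline. One caveat on Step~2: for merely continuous $\Phi$, total-variation continuity of $q\mapsto\mu^{qr}$ does not by itself give meaning to $\int_k^R\mu^{qr}\,d\Phi(q)$ against a non-BV integrator. What saves you is that the Riemann--Stieltjes sums $\sum_i(\Phi(s_{i+1})-\Phi(s_i))\mu^{s_ir}$ do converge. This holds because the remainder $\sum_i\1_{\{s_i<v_n\le s_{i+1}\}}(\Phi(v_n)-\Phi(s_{i+1}))$ is bounded by the oscillation of $\Phi$ on the mesh, and Cauchy--Schwarz for H-measures then controls it.

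\textbf{The gap is Step~3, and Step~4 rests on it.} Continuity of $q\mapsto h^{qp_0}$ in $L^1(\sigma)$ fixes each density only up to a $q$-dependent $\sigma$-null set. It yields no point $(z,\xi)$ at which $q\mapsto h^{qp_0}(z,\xi)$ is continuous and nonzero on a fixed interval around $p_0$. Positive semidefiniteness does not supply this either. Let $\sigma$ be arclength on an arc parametrized by $t\in(0,1)$, and take the rank-one kernel $h^p(t)h^q(t)$ with $h^{p_0}\equiv1$, $h^q=1$ off a set of measure $O(\abs{q-p_0})$, and $h^q(t)=0$ whenever $q-p_0\in\{2^{-j}t:j\geq1\}$. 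It is positive semidefinite and continuous in total variation, yet at every $t$ the zeros of $q\mapsto h^q(t)$ accumulate at $p_0$.

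Averaging over a small neighbourhood $B$ of $(z_0,\xi_0)$ does not repair this. Step~2 then only gives $\int v_B(q)\cdot d\Phi(q)=0$ with the $q$-dependent vector $v_B(q)=\int_B\xi\,h^{qp_0}\,d\sigma$. Replacing $v_B(q)$ by $\xi_0\int_Bh^{qp_0}\,d\sigma$ costs an error of order $\operatorname{diam}B$. Moreover, total-variation continuity gives an interval on which $\int_Bh^{qp_0}\,d\sigma\neq0$ only with a length depending on $B$, which may shrink with $B$. So you never get one direction and one interval on which $\xi\cdot\Phi$ is exactly constant. The integration by parts in your last paragraph does not help: the term $\int\xi\cdot\Phi(q)\,\rho(q)\,d_qh^{qp_0}$ it produces is of the same order as the main term.

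\textbf{The missing idea is to argue with masses of sets rather than with points.} Take first $\Phi\in C^1$, which is all the paper needs, since there $\Phi(r)=(r,f(r))$ with $f\in C^1$.
\begin{enumerate}
\item Divide the Step~2 identity by $k_2-k_1$ and let $k_2\downarrow k_1=k$. Total-variation continuity gives $(\xi\cdot\Phi'(k))\,\mu^{kq}=0$ for all $k,q\in P$. Hence $\mu^{kp_0}$ vanishes outside the closed set $\Sigma_k=\{(z,\xi):\xi\cdot\Phi'(k)=0\}$.
\item Set $\nu=\mu^{p_0p_0}$. Fix a compact $K$ with $\nu(K)>0$ and $\delta>0$ such that $\abs{\mu^{kp_0}-\nu}(K)<\nu(K)/N$ for $k\in P$, $\abs{k-p_0}<\delta$. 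Then $\nu(K\setminus\Sigma_k)<\nu(K)/N$.
\item Suppose the vectors $\Phi'(k)$, $\abs{k-p_0}<\delta$, spanned $\R^N$. By density of $P$ and continuity of $\Phi'$, you could pick $N$ linearly independent ones with $k_i\in P$. Then $\bigcap_i\Sigma_{k_i}=\emptyset$, so $\nu(K)\leq\sum_i\nu(K\setminus\Sigma_{k_i})<\nu(K)$, a contradiction.
\item Hence all $\Phi'(k)$ with $\abs{k-p_0}<\delta$ lie in some hyperplane $\eta^\perp$, and $\eta\cdot\Phi$ is constant on $(p_0-\delta,p_0+\delta)$.
\end{enumerate}
The statement allows $\Phi\in C(\R;\R^N)$, so a complete proof must also run this set-level reasoning with increments $\Phi(s')-\Phi(s)$ in place of $\Phi'(k)$. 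One way is to work on a subspace of minimal dimension carrying mass of $\nu$, where only the projection of $\Phi$ enters the localization identity. This needs a separate estimate of the remainder living on thin level bands around $p_0$, and your sketch does not address it.
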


\begin{remark}
{
The assumption that \eqref{eq:panov-C} is precompact is equivalent
to the customary signed formulation. To see this, let us denote
the distribution in \eqref{eq:panov-C} by $A_n(k)$. For every 
$k\in\R$, the signed entropy production satisfies the algebraic identity
$$
\diver_z
	\left[
	\operatorname{sgn}(v_n-k)
	\bigl(
	\Phi(v_n)-\Phi(k)
	\bigr)
	\right]
	=
	2A_n(k)
	-
	\diver_z\Phi(v_n).
$$
Since $(v_n)$ is bounded in $L^\infty(\mathcal O)$, 
there exists a constant $C>0$ such that $\abs{v_n}\leq C$ 
uniformly. Assuming that $A_n(k)$ is precompact in 
$H^{-1}_{\loc}(\mathcal O)$ for every $k \in \R$, 
we can evaluate it at some $k_0<-C$. For such $k_0$, 
we have $\mathbf 1_{\{v_n>k_0\}} = 1$ everywhere, 
which yields $A_n(k_0)=\diver_z\Phi(v_n)$. Therefore, 
the divergence of the flux is also precompact. 
It follows that the right-hand side of the identity 
above is a sum of precompact sequences, and hence 
the signed formulation is precompact. 
The reverse implication can be obtained analogously.}
\end{remark}

Theorems~\ref{thm:panov-hmeasure}
and~\ref{thm:panov-localization}, including the continuity of the
two-parameter family and the localization conclusion, originate in
Panov's work \cite{Panov:1994gf,Panov:2005lr,Panov:2007aa}.
We use the following consequence to treat possible degenerate intervals
separately by truncation.

\begin{lemma}[Compactness from compact truncations]
\label{lem:panov-closure}
Let $J\subset\R$ be open, let $K_0=[\alpha,\beta]\Subset J$, and let
$F\subset J$ be countable and dense. Denote by
$\mathscr I_\Phi(K_0)$ the family of intervals
$$
[c,d]\Subset(\alpha,\beta),
\qquad
c,d\in F,
\qquad
c<d,
$$
for which one can find an open interval
$$
D\Subset J,
\qquad
[c,d]\Subset D,
$$
and a vector $\eta\neq0$ such that
$$
\eta\cdot\Phi(r)
=
\mathrm{const}
\qquad
\text{for every }r\in D.
$$

Suppose that $(v_n)$ is bounded in $L^\infty(\mathcal O)$, takes values
in $K_0$, and satisfies Panov's condition
\eqref{eq:panov-C}.  Assume also that
$T_{c,d}(v_n)$ is strongly precompact in
$L^1_{\loc}(\mathcal O)$ for every
$[c,d]\in\mathscr I_\Phi(K_0)$.
Then $(v_n)$ is strongly precompact in
$L^1_{\loc}(\mathcal O)$.
\end{lemma}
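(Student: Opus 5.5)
Pass to a subsequence so that Theorem~\ref{thm:panov-hmeasure} applies to $(v_n)$, with Young measure $\nu_z$, exceptional countable set $\R\setminus P$ and H-measures $\mu^{pq}$. By the last assertion of that theorem, strong precompactness follows once we show $\mu^{pp}=0$ for every $p\in P$. Since $v_n$ takes values in $K_0=[\alpha,\beta]$, we have $U_n^p\equiv0$ for $p<\alpha$ and for $p\geq\beta$, so $\mu^{pp}=0$ there. It remains to treat $p\in P\cap[\alpha,\beta)$. The argument covers every subsequence, hence gives precompactness of the whole sequence.

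\emph{Interior points.} Fix $p_0\in P\cap(\alpha,\beta)$ and suppose $\mu^{p_0p_0}\neq0$. Theorem~\ref{thm:panov-localization} gives an open interval $D_0\ni p_0$ and $\eta\in\mathbb S^{N-1}$ with $\eta\cdot\Phi$ constant on $D_0$. Shrink $D_0$ so that $D_0\Subset J\cap(\alpha,\beta)$. By density of $F$, choose $c,d\in F\cap D_0$ with $c<p_0<d$; then $[c,d]\Subset D_0$, so $[c,d]\in\mathscr I_\Phi(K_0)$ with $D=D_0$. By hypothesis $w_n:=T_{c,d}(v_n)$ is precompact in $L^1_{\loc}$, and along a further subsequence $w_n\to w$ strongly. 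For $p\in(c,d)$ we have $\{w_n>p\}=\{v_n>p\}$. Strong convergence of $w_n$ implies that the Young measure of $(w_n)$ is $\delta_{w(z)}$. It also implies $\1_{\{w_n>p\}}\to\1_{\{w>p\}}$ strongly in $L^2_{\loc}$ for all $p$ outside the countable set of atoms of the distribution of $w$. Hence, for such $p\in P\cap(c,d)$, $U_n^p\to0$ strongly in $L^2_{\loc}$, and the H-measure $\mu^{pp}$ of a strongly null sequence vanishes. These $p$ are dense in $(c,d)$, and $\mu^{pp}$ depends continuously on $p\in P$. Therefore $\mu^{p_0p_0}=0$, a contradiction.

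\emph{Endpoint $p=\alpha$.} If $\alpha\in P$, the same localization argument applies once $D_0$ is allowed to contain $\alpha$. The intervals in $\mathscr I_\Phi(K_0)$ must lie in $(\alpha,\beta)$, so instead pick $c<d$ in $F\cap(\alpha,\beta)$ close to $\alpha$ inside $D_0$. The previous step shows $\mu^{pp}=0$ for all $p\in P\cap(\alpha,\beta)$. Continuity of $p\mapsto\mu^{pp}$ on $P$ at $p=\alpha$, approached from the right within $P$ (whose complement is countable), then gives $\mu^{\alpha\alpha}=0$. The same continuity argument handles the interior case even more directly. In effect, the interior step is the core, and endpoints follow by continuity.

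\emph{Main obstacle.} The delicate point is the identification ``$T_{c,d}(v_n)$ strongly convergent $\Rightarrow$ $\mu^{pp}=0$ for $p\in(c,d)\cap P$''. One must check that the centering $\nu_z((p,\infty))$ in $U_n^p$ coincides with the limit of $\1_{\{w_n>p\}}$. This holds because the weak-$*$ limits agree, so strong $L^2_{\loc}$ convergence of $\1_{\{v_n>p\}}$ to $\nu_z((p,\infty))$ follows and $U^p_n\to0$ strongly. One must also handle the countable set of atomic levels, using only a dense set of $p$ together with the total-variation continuity from Theorem~\ref{thm:panov-hmeasure}.
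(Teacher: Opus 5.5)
Your proof is correct and follows essentially the paper's route: localize at a nonvanishing level, pick $[c,d]\in\mathscr I_\Phi(K_0)$ by density of $F$, identify $\nu_z((q,\infty))=\1_{\{w>q\}}$ at non-atomic levels $q\in P\cap(c,d)$ to get $\mu^{qq}=0$, and close with continuity of $q\mapsto\mu^{qq}$. The only difference is organizational. The paper spreads the \emph{nonvanishing} of $\mu^{pp}$ to a neighbourhood $D_1$ of $p\in[\alpha,\beta)$, which handles $p=\alpha$ in the same stroke. You instead spread the \emph{vanishing} from good levels to $p_0$ and treat $\alpha$ separately by right-continuity; your extra sentence about localizing at $\alpha$ is not needed.
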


\begin{proof}
The assertion is immediate if $\alpha=\beta$.  Assume therefore that
$\alpha<\beta$, and suppose, to obtain a contradiction, that there is a
subsequence, still denoted by $(v_n)$, having no strongly convergent
subsequence in $L^1_{\loc}(\mathcal O)$.

After passing to a further subsequence, let $\nu_z$ be the corresponding
Young measure, and let $\{\mu^{pq}\}_{p,q\in P}$ be the family supplied
by Theorem~\ref{thm:panov-hmeasure}. Since the
sequence is not strongly precompact, this family does not vanish identically.
Its positive semidefiniteness therefore gives a level $p\in P$ such that
$\mu^{pp}\neq0$.

The centered level functions vanish identically for $p<\alpha$ and
$p\geq\beta$. Hence $p\in[\alpha,\beta)$.
By Theorem~\ref{thm:panov-localization}, one can find an open interval
$D\ni p$ and a vector $\eta\neq0$ such that
$$
\eta\cdot\Phi(r)
=
\mathrm{const}
\qquad
\text{for every }r\in D.
$$
Since $K_0\Subset J$, we may shrink $D$ so that $D\Subset J$. Because
$\mu^{pp}$ is a nonzero nonnegative measure, there exists
$\psi\in C_c(\mathcal O\times \mathbb S^{N-1})$, with $\psi\geq0$, such that
$\int\psi\,d\mu^{pp}>0$.
By continuity of the diagonal family, there exists an open interval
$D_1\ni p$, with $D_1\Subset D$, such that
$$
\int\psi\,d\mu^{qq}>0
\qquad
\text{for every }q\in P\cap D_1.
$$
In particular,
 $\mu^{qq}\neq0$ for every $q\in P\cap D_1$. Since
$p\in[\alpha,\beta)$, the set $D_1\cap(\alpha,\beta)$ contains a
nonempty open interval. We may therefore choose
$c,d\in F$ such that
$$
[c,d]
\Subset
D_1\cap(\alpha,\beta).
$$
Then $[c,d]\in\mathscr I_\Phi(K_0)$. Set $w_n:=T_{c,d}(v_n)$.
By assumption, $(w_n)$ is strongly precompact locally.  Passing to a
further subsequence, we may assume that
$$
w_n\longrightarrow w
\quad\text{strongly in }L^1_{\loc}(\mathcal O)
$$
and almost everywhere in $\mathcal O$.
Furthermore, there exists a countable set $E_w\subset\R$ such that, for
every $q\notin E_w$,
$$
\mathbf 1_{\{w_n>q\}}
\longrightarrow
\mathbf 1_{\{w>q\}}
\quad\text{strongly in }L^1_{\loc}(\mathcal O).
$$
Indeed, let $Q_1\Subset Q_2\Subset\cdots\Subset\mathcal O$ be a
countable exhaustion. For each $j$, there are at most countably many
levels $q$ for which
$$
\mathcal L^N
\bigl(
\{z\in Q_j:w(z)=q\}
\bigr)>0.
$$
Outside the union of these exceptional sets, the asserted convergence follows
from the almost-everywhere convergence and dominated convergence on every
$Q_j$.

Since $\R\setminus P$ is at most countable, we can choose
$q\in P\cap(c,d)\setminus E_w$. By the choice of $[c,d]$,
$\mu^{qq}\neq0$.
Since $q\in(c,d)$, truncation does not change the corresponding level set:
$\mathbf 1_{\{w_n>q\}}=\mathbf 1_{\{v_n>q\}}$.
Because $q\in P$ (see Theorem \ref{thm:panov-hmeasure}),
$$
\mathbf 1_{\{v_n>q\}}
\overset{*}{\rightharpoonup}
\nu_z((q,\infty))
\quad\text{in }L^\infty_{\loc}(\mathcal O).
$$
On the other hand,
$$
\mathbf 1_{\{w_n>q\}}
\longrightarrow
\mathbf 1_{\{w>q\}}
\quad\text{strongly in }L^1_{\loc}(\mathcal O).
$$
The weak-$*$ limit is unique, and hence
$$
\nu_z((q,\infty))
=
\mathbf 1_{\{w>q\}}
$$
almost everywhere.  It follows that
$$
U_n^q
=
\mathbf 1_{\{v_n>q\}}
-
\nu_z((q,\infty))
\longrightarrow0
\quad\text{strongly in }L^1_{\loc}(\mathcal O).
$$
Since
$\abs{U_n^q}\leq1$,
the convergence also holds strongly in
$L^2_{\loc}(\mathcal O)$.  Therefore, the H-measure associated with
$U_n^q$ vanishes: $\mu^{qq}=0$.
This contradicts the choice of $q$.  The contradiction proves the strong
precompactness of $(v_n)$ in $L^1_{\loc}(\mathcal O)$.
\end{proof}

We also use the following standard consequence of Murat's interpolation
lemma \cite{MuratII}.

\begin{lemma}[Murat's interpolation lemma]
\label{lem:murat-interpolation}
Let $Q\Subset\R^N$ be a bounded domain with Lipschitz boundary.
Suppose that $(F_n)$ is bounded in
$
W^{-1,\infty}(Q)
$
and that $F_n=A_n+B_n$, where $(A_n)$ is relatively compact in
$H^{-1}(Q)$ and $(B_n)$ is bounded in $\cM(Q)$.
Then $(F_n)$ is relatively compact in $H^{-1}(Q)$.  If, in addition,
\begin{equation}\label{eq:murat-zero-assumptions}
A_n\longrightarrow0
\quad\text{in }H^{-1}(Q),
\qquad
B_n\longrightarrow0
\quad\text{in }\mathcal D'(Q),
\end{equation}
then
$$
F_n\longrightarrow0
\quad\text{in }H^{-1}(Q).
$$
\end{lemma}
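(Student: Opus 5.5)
The natural route is to place both pieces of the decomposition in a common space $W^{-1,q}(Q)$ with $q<2$, where they are compact, and then interpolate against the uniform $W^{-1,\infty}$ bound to recover compactness in $H^{-1}=W^{-1,2}$.

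\emph{Step 1 (measures are compact in negative Sobolev spaces).} Fix $1<q<N/(N-1)$, so that the conjugate exponent satisfies $q'>N$. By Morrey's embedding, $W_0^{1,q'}(Q)$ embeds compactly into $C_0(\overline Q)$, since $Q$ is bounded and Lipschitz. Taking adjoints (Schauder's theorem), $\cM(Q)\subset C_0(\overline Q)^*$ embeds compactly into $W^{-1,q}(Q)=(W_0^{1,q'}(Q))^*$. Hence $(B_n)$ is relatively compact in $W^{-1,q}(Q)$.

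Since $q<2$ and $Q$ is bounded, $H^{-1}(Q)\hookrightarrow W^{-1,q}(Q)$ continuously, so $(A_n)$ is also relatively compact in $W^{-1,q}(Q)$. Consequently $(F_n)$ is relatively compact in $W^{-1,q}(Q)$.

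\emph{Step 2 (interpolation).} Fix some $r\in(2,\infty)$. Because $Q$ is bounded, $W^{-1,\infty}(Q)\hookrightarrow W^{-1,r}(Q)$, so $(F_n)$ is bounded in $W^{-1,r}(Q)$. On a bounded Lipschitz domain the spaces $W^{-1,p}(Q)$, $1<p<\infty$, form a complex interpolation scale; they are the duals of the interpolation scale $W_0^{1,p'}(Q)$. Choose $\vartheta\in(0,1)$ with $\tfrac12=\tfrac{\vartheta}{q}+\tfrac{1-\vartheta}{r}$. Then
\[
\norm{G}_{H^{-1}(Q)}\leq C\norm{G}_{W^{-1,q}(Q)}^{\vartheta}\norm{G}_{W^{-1,r}(Q)}^{1-\vartheta}.
\]
Apply this to $G=F_n-F_m$ along a subsequence that is Cauchy in $W^{-1,q}(Q)$. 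The $W^{-1,r}$ factor is uniformly bounded, so the subsequence is Cauchy in $H^{-1}(Q)$. This gives relative compactness in $H^{-1}(Q)$.

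\emph{Step 3 (the vanishing case).} Assume in addition \eqref{eq:murat-zero-assumptions}. Then $F_n=A_n+B_n\to0$ in $\mathcal D'(Q)$. By Step 2, every subsequence of $(F_n)$ has a further subsequence converging in $H^{-1}(Q)$, and hence in $\mathcal D'(Q)$. Its limit must therefore be $0$. Since every subsequence has a sub-subsequence converging to the same limit $0$, the whole sequence satisfies $F_n\to0$ in $H^{-1}(Q)$.

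\emph{Main obstacle.} The delicate point is the interpolation inequality for negative Sobolev spaces on $Q$. It needs the Lipschitz boundary, for instance through a bounded extension operator or a Bogovskii-type representation $G=\diver_z\mathbf g$ with $\norm{\mathbf g}_{L^p}\lesssim\norm{G}_{W^{-1,p}}$ that works simultaneously for all $p$. Once this is available, Riesz--Thorin (Hölder) interpolation applied to $\mathbf g$ gives the estimate. I would cite Murat's original argument for this step rather than reprove it.
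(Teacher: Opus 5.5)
The paper does not prove this lemma but only cites Murat. Your argument is exactly the standard proof behind that citation, and it is correct: compactness of $\cM(Q)$ in $W^{-1,q}(Q)$ for $1<q<N/(N-1)$ via Morrey and Schauder, interpolation against the uniform $W^{-1,r}$ bound with $2<r<\infty$, and a sub-subsequence argument for the vanishing case. Two small points: for $N=1$, impose $q<2$ explicitly. And the interpolation step you flag needs a precise reference: the classical cutoff-and-Laplacian argument gives compactness only in $H^{-1}_{\mathrm{loc}}(Q)$, which already suffices for Proposition~\ref{prop:pathwise-condition-C}, while the global inequality on a Lipschitz $Q$ follows from a linear extension operator bounded on negative-order spaces, such as Rychkov's universal extension operator.
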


%%%%%%%%%%%%%%%%%%%%%%%%%
%%%%%%%%%%%%%%%%%%%%%%%%%
\section{Blow-up at the initial hyperplane}
\label{sec:pathwise-blowup}

\subsection{Countable preparation for dimension reduction}
\label{subsec:countable-reduction}

The blow-up argument gives compactness unless
Theorem~\ref{thm:panov-localization} yields a kinetic interval on which a
directional component of the space--time flux is constant.  On such an interval,
Lemma~\ref{lem:pathwise-linear-change}
isolates one spatial direction, and fixing the corresponding transverse
coordinate produces a problem in one lower dimension.  This reduction may
have to be repeated, but at most $d$ times.

Once $\omega$ is fixed, the kinetic measure can be sliced deterministically.
The martingale identities require additional preparation: for a fixed test
function and coordinate map, each identity used after a reduction holds
outside an exceptional probability-null set that may depend on those choices.
Panov's reduction and the transverse coordinate are selected only during the
pathwise argument, so separate almost-sure statements for fixed choices are
insufficient.  To obtain one realization for which every possible reduction
step is valid, we select in \textit{advance} all required test functions,
coordinate maps, and continuous versions of the associated martingale
processes, and then remove the union of their exceptional sets.
The deterministic flux and countable kinetic levels make this collection
countable, so the union remains a null set.  The following reduction scheme
presents this preparation.

\medskip

A reduction datum $\alpha$ consists of a fixed finite chain of one-step
reductions from the original datum and carries the spatial dimension
$d_\alpha$, the kinetic interval $I_\alpha$, the attached truncation
$[a_\alpha,b_\alpha]$, and the reduced flux $f_\alpha$.  The original datum
$\alpha_0$ has components $\bigl(d,(-L,L),[-M,M],f\bigr)$ and an empty
reduction chain.
Here $d$ is the spatial dimension in \eqref{eq:spde}, $M$ is the uniform
bound $\abs{u}\leq M$, $L>M+1$ determines the ambient kinetic interval, and
$f:\R\to\R^d$ is the flux function in \eqref{eq:spde}.
For a reduction datum $\alpha$ of spatial dimension
$q=d_\alpha\geq1$, use the countable dense set of
kinetic levels fixed in Lemma~\ref{lem:panov-closure}.  For every degenerate
interval $[c,e]$ in the resulting countable family, fix an open interval
$D$ with $[c,e]\Subset D\Subset I_\alpha$, a vector
$\eta\in\R^{q+1}\setminus\{0\}$ such that
$r\mapsto\eta\cdot(r,f_\alpha(r))$ is constant on $D$, and one linear change
of variables from
Lemma~\ref{lem:pathwise-linear-change}.  The associated one-step reduction
$\beta$ has spatial dimension $d_\beta=q-1$, kinetic interval
$
I_\beta:=D,
$
attached truncation
$
[a_\beta,b_\beta]:=[c,e],
$
and flux
$$
f_\beta(r)
:=
\pi_\parallel
\bigl(
A_{\alpha\beta}f_\alpha(r)
+
c_{\alpha\beta}r
\bigr),
$$
where $\pi_\parallel$ projects onto the first $q-1$ coordinates.  Every
one-step reduction is deterministic and lowers the dimension.  The
countable family of all reduction data constructed in this way is denoted
by $\mathfrak R$ (finite reduction chains carrying $d_\alpha$, $I_\alpha$,
$[a_\alpha,b_\alpha]$, and $f_\alpha$).

For every reduction datum $\alpha\in\mathfrak R$, fix a countable
$L^1_{\loc}$-determining family
$$
\mathscr D_\alpha
\subset
C_c^1(\R^{d_\alpha}\times I_\alpha)
$$
of spatial--kinetic test functions.  Precisely, for every relatively compact open
cylinder
$
Q\Subset\R^{d_\alpha}\times I_\alpha,
$
the rational span of
$
\mathscr D_\alpha\cap C_c^1(Q)
$
is dense in $L^1(Q)$.  We choose these members as finite rational sums of
$
\chi(x)\rho(\lambda)
$
with $\rho$ in the countable family $\mathcal R$ fixed below and
$\supp\rho\Subset I_\alpha$.  Fix also a
countable family
$
\Theta\subset C_c^1(0,T)
$
whose rational span is dense in the $C^1$ norm among functions supported in
each fixed compact subinterval of $(0,T)$, and define
$$
\mathscr T_\alpha
:=
\operatorname{span}_{\mathbb Q}
\left\{
\theta(t)\psi(x,\lambda):
\theta\in\Theta,\quad
\psi\in\mathscr D_\alpha
\right\}.
$$
The spatial factors $\chi$ and the time factors in $\Theta$ are chosen so
that, for every fixed $\rho\in\mathcal R$ with
$\supp\rho\Subset I_\alpha$, the rational span of the tensors in
$\mathscr T_\alpha$ with kinetic factor $\rho$ is dense, in the $C^1$ norm
on each fixed compact cylinder, among the functions
$\eta(t,x)\rho(\lambda)$ with
$\eta\in C_c^1((0,T)\times\R^{d_\alpha})$.  This additional property is
available by separability and will be used to extend the reduced kinetic
identities by continuity.
For every datum and one-step reduction, include the cutoff sequences from
\eqref{eq:cutoff-approximation}, balls with rational centers and radii,
compact transverse intervals with rational endpoints, the fixed mollifier
sequence in Lemma~\ref{lem:pathwise-measure-slicing}, and the fixed linear
pullbacks associated with the reduction.  These additions remain countable.
All stochastic integrals and martingale processes associated with this fixed
collection are constructed before a realization is chosen.

We also record the transverse spatial coordinates fixed in the preceding
reduction steps.  For a datum $\alpha$, the integer
$r_\alpha:=d-d_\alpha$ is the number of such coordinates, and
$\sigma=(\zeta_1,\ldots,\zeta_{r_\alpha})\in\R^{r_\alpha}$ collects their
fixed values, while $x\in\R^{d_\alpha}$ denotes the remaining active spatial
variables.  Thus $\sigma$ is a spatial parameter; for the original datum
$\alpha_0$, $r_{\alpha_0}=0$ and $\sigma=\varnothing$.

Together, $x$ and $\sigma$ determine the physical point
$
y=\mathcal X_\alpha(t,x,\sigma)\in\R^d.
$
Composing the inverse changes of variables
\eqref{eq:pathwise-linear-change} along the reduction chain from
$\alpha_0$ to
$\alpha$ gives the coordinate reconstruction map
$$
\mathcal X_\alpha:
(0,T)\times\R^{d_\alpha}\times\R^{r_\alpha}
\longrightarrow
\R^d.
$$
For the original datum,
$
\mathcal X_{\alpha_0}(t,x,\varnothing)=x,
$
because its active coordinate is the original physical coordinate.
Suppose that $\beta$ is a one-step reduction of $\alpha$ obtained using
$
w=A_{\alpha\beta}x+c_{\alpha\beta}t,
$
where $q=d_\alpha$ and $w'=(w_1,\ldots,w_{q-1})$.  Fixing
$w_q=\zeta$ makes $w'$ the active coordinate for $\beta$ and appends
$\zeta$ to the accumulated parameter.  The reconstruction maps then satisfy
\begin{equation}\label{eq:reduction-coordinate-recursion}
	\mathcal X_\beta
	\bigl(
	t,w',(\sigma,\zeta)
	\bigr)
	=
	\mathcal X_\alpha
	\left(
	t,
	A_{\alpha\beta}^{-1}
	\bigl(
	(w',\zeta)-c_{\alpha\beta}t
	\bigr),
	\sigma
	\right).
\end{equation}
Accordingly, before fixing either $\omega$ or $\sigma$, define
\begin{equation}\label{eq:reduction-random-sections}
\begin{aligned}
	u_\alpha(\omega,t,x,\sigma)
	&:=
	u\bigl(
	\omega,t,\mathcal X_\alpha(t,x,\sigma)
	\bigr),\\
	b_{\ell,\alpha}(t,x,\sigma,\lambda)
	&:=
	b_\ell
	\bigl(
	t,\mathcal X_\alpha(t,x,\sigma),\lambda
	\bigr).
\end{aligned}
\end{equation}
Thus $u_\alpha$ is the original solution written in the reduced coordinates,
while $b_{\ell,\alpha}$ is the original noise coefficient evaluated at the
same physical point $\mathcal X_\alpha(t,x,\sigma)$ and the same state value
$\lambda$. Because every reconstruction map is 
deterministic and leaves time unchanged,
Definition~\ref{def:kinetic-solution}\textup{(i)} implies that $u_\alpha$ is
$\mathcal P\otimes\cB(\R^{d_\alpha}\times\R^{r_\alpha})$-measurable.
Consequently, every subsequently reduced solution is jointly predictable in
$(\omega,t)$ and Borel measurable in $(x,\sigma)$, rather than a new
stochastic object introduced after fixing $\omega$.

\subsection{Pathwise martingale processes and rescaling}

Although their notation differs, all the martingales below are constructed
from the same Wiener processes $W_\ell$ and from $b_\ell$ or its reduced
pullbacks $b_{\ell,\alpha}$.  The notation records which variables have been
integrated out and which parameters remain.
The purpose of this subsection is to choose, before fixing $\omega$, continuous
versions of every martingale needed in the pathwise blow-up and in every
prescribed dimension reduction.  The scalar martingales
$M_\phi$ in \eqref{eq:scalar-martingale-pairing} and
$M_\varphi$ in \eqref{eq:time-dependent-scalar-martingale} integrate out
$x$ and use, respectively, a kinetic test function of $(x,\lambda)$ alone
and one that may also depend on $t$.  Their parameterized scalar counterparts
after reduction are
$M_{\alpha,\psi,J}$ and $Z_{\alpha,\phi,J}$, respectively, as defined in
\eqref{eq:reduction-scalar-martingales}.  In contrast, the script notation
$\mathscr M$ denotes an
$L^2_{\loc}$-valued martingale that retains the active spatial variable.
The reduction scheme uses three parameterized forms of these stochastic
integrals:
\begin{enumerate}[label=\textup{(\roman*)}]
	\item the spatial process $\mathscr M^\rho$, defined in
	\eqref{eq:global-spatial-martingale} from the local processes
	\eqref{eq:spatial-martingale-process}, leaves $x$ unintegrated and appears
	in the averaged kinetic equations
	\eqref{eq:pathwise-averaged-identity} and
	\eqref{eq:pathwise-averaged-identity-with-trace};

	\item the reduced spatial process
	$\mathscr M_{\alpha,J,K}^{\rho}$ in
	\eqref{eq:reduction-spatial-martingale} leaves $(\sigma,x)$ unintegrated,
	where $x\in\R^{d_\alpha}$ is the remaining active spatial variable and
	$\sigma\in\R^{r_\alpha}$ collects the transverse spatial parameters fixed
	in earlier reductions.  Its joint construction in $\sigma$ permits
	evaluation at a fixed transverse parameter after a reduction; the
	parameters retained in the pathwise argument are precisely
	$\sigma\in\mathcal A_\alpha(\omega)$, with
	$\mathcal A_\alpha(\omega)$ defined in
	\eqref{eq:recursive-admissible-parameters};

	\item the scalar processes $Z_{\alpha,\phi,J}$ and
	$M_{\alpha,\psi,J}$ in \eqref{eq:reduction-scalar-martingales} integrate
	in $x$.  The former uses a time-dependent test function
	$\phi\in\mathscr T_\alpha$ in the reduced kinetic identity, whereas the
	latter uses the constant-in-time extension
	$\widehat\psi(t,x,\lambda):=\psi(x,\lambda)$ of
	$\psi\in\mathscr D_\alpha$ in the time-integrated identity.
\end{enumerate}
The index $j$ restricts the original spatial martingale
\eqref{eq:spatial-martingale-process} to $B_j$; in the reduced processes
\eqref{eq:reduction-spatial-martingale}--\eqref{eq:reduction-scalar-martingales},
$J\Subset\R^{r_\alpha}$ and $K\Subset\R^{d_\alpha}$ restrict $\sigma$ and
the active variable $x$, respectively.
Equation~\eqref{eq:martingale-restriction} and the compatibility statement
following \eqref{eq:reduction-scalar-martingales} ensure consistency under
restriction.  The $\mathscr M$-processes retain $x$ and therefore enter the
spatial distributional equations and the blow-up argument; $Z$ and $M$
integrate out $x$ and provide the scalar martingale terms for fixed test
functions.

Let
$
\rho\in C_c^1((-L,L)).
$
For $j\geq1$, let
$
B_j:=\{x\in\R^d:\abs{x}<j\}
$
be the open ball of radius $j$ centered at the origin, and define the
$L^2(B_j)$-valued It\^o integral
\begin{equation}\label{eq:spatial-martingale-process}
	\mathscr M^\rho_j(t)
	:=
	\sum_{\ell\geq1}
	\int_0^t
	\rho\bigl(u(s,\cdot)\bigr)
	b_\ell\bigl(s,\cdot,u(s,\cdot)\bigr)
	\,dW_\ell(s).
\end{equation}
The stochastic integral is well defined because
\begin{align*}
	&\sum_{\ell\geq1}
	\E
	\int_0^T
	\norm{
	\rho(u(s,\cdot))
	b_\ell(s,\cdot,u(s,\cdot))
	}_{L^2(B_j)}^2
	\,ds
	\\
	&\qquad\leq
	T
	\abs{B_j}\norm{\rho}_\infty^2
	\sup_{(t,x,\lambda)\in[0,T]\times\overline B_j\times[-M,M]} \sum_{\ell\geq1}
		\abs{b_\ell(t,x,\lambda)}^2
	<\infty,
\end{align*}
{where we have used that $|u|\leq M$ a.e.~and Assumption \ref{ass:coeff}.}
Thus $\mathscr M^\rho_j$ admits a continuous $L^2(B_j)$-valued version
starting from zero.

If $k\geq j$, uniqueness of Hilbert-space-valued It\^o integrals gives
\begin{equation}\label{eq:martingale-restriction}
	\mathscr M^\rho_k(t)\big|_{B_j}
	=
	\mathscr M^\rho_j(t)
	\quad\text{in }L^2(B_j)
\end{equation}
for every $t\in[0,T]$, up to indistinguishability.  Since the pairs $(j,k)$
are countable, the continuous modifications can be fixed so that
\eqref{eq:martingale-restriction} holds simultaneously for every
$j\leq k$ and every $t$.  Define the unique process
\begin{equation}\label{eq:global-spatial-martingale}
\mathscr M^\rho
\in
C\bigl([0,T];L^2_{\loc}(\R^d)\bigr),
\qquad
\mathscr M^\rho(t)\big|_{B_j}
=
\mathscr M^\rho_j(t).
\end{equation}

Fix a countable family
$$
\mathcal R\subset C_c^1((-L,L))
$$
with the following properties:

\begin{enumerate}[label=\textup{(\roman*)}]
	\item on every compact interval $J\Subset(-L,L)$, the rational linear span
		of the functions in $\mathcal R$ supported in $J$ is dense in
		$C_c^1(J)$ in the $C^1$ norm;

	\item for every open interval $I\subset(-L,L)$ and every
	$[c,{e}]\Subset I$, there are a compact interval $I_0\Subset I$ and a
	sequence
	$$
	\rho_j^{c,{e},I}\in\mathcal R
	$$
	such that
	\begin{equation}\label{eq:cutoff-approximation}
		0\leq\rho_j^{c,{e},I}\leq1,
		\qquad
		\supp\rho_j^{c,{e},I}\subset I_0,
		\qquad
		\rho_j^{c,{e},I}
		\longrightarrow
		\1_{(c,{e})}
		\quad\text{in }L^1(\R).
	\end{equation}
\end{enumerate}

Such a family is obtained from countable dense families of smooth functions
on rational compact subintervals of $(-L,L)$, together with standard smooth
approximations of indicator functions.

For each $\rho\in\mathcal R$ and $j\geq1$, choose a jointly measurable
representative of $\mathscr M^\rho_j$ on
$\Omega\times[0,T]\times B_j$.  The sets
$B_j\setminus B_{j-1}$, with $B_0=\varnothing$, are disjoint and cover
$\R^d$.  On $B_j\setminus B_{j-1}$ use the representative chosen on $B_j$.
This countable pasting is jointly measurable.  By
\eqref{eq:martingale-restriction}, its restriction to every $B_k$ agrees
almost everywhere with $\mathscr M_k^\rho$; hence the particular annular
choice is immaterial at the level of $L^2_{\loc}(\R^d)$.  The purpose of the
pasting is to replace the compatible local equivalence classes by one
jointly measurable function on
$\Omega\times[0,T]\times\R^d$ for the later pathwise restrictions.  We have
therefore obtained a jointly measurable representative
\begin{equation}\label{eq:joint-martingale-representative}
	(\omega,t,x)
	\longmapsto
	\mathscr M^\rho(\omega,t,x)
\end{equation}
of the $L^2_{\loc}(\R^d)$-valued process for every
$\rho\in\mathcal R$.

For each reduction datum $\alpha\in\mathfrak R$, we use the martingale
integrand in
\eqref{eq:spatial-martingale-process}, expressed in the reduced coordinates.
At integration time $r$, the original spatial point is
$\mathcal X_\alpha(r,x,\sigma)$; hence
\eqref{eq:reduction-random-sections} replaces
$u(r,\cdot)$ and $b_\ell(r,\cdot,u(r,\cdot))$ by
$u_\alpha(r,x,\sigma)$ and
$b_{\ell,\alpha}(r,x,\sigma,u_\alpha(r,x,\sigma))$, respectively.  The
spatial martingale in \eqref{eq:reduction-spatial-martingale} and the scalar
martingales in \eqref{eq:reduction-scalar-martingales} are constructed
jointly in the full parameter $\sigma\in\R^{r_\alpha}$.  Let
$
\rho\in\mathcal R
$
have support compactly contained in $I_\alpha$.  For compact boxes
$
J\Subset\R^{r_\alpha}
$
and
$
K\Subset\R^{d_\alpha},
$
set, for $(\sigma,x)\in J\times K$,
\begin{align}
	\mathscr M_{\alpha,J,K}^{\rho}(t)(\sigma,x)
	:=
	\sum_{\ell\geq1}
	\int_0^t
	&\rho
	\bigl(
	u_\alpha(r,x,\sigma)
	\bigr)
	%\notag\\ &\times
	b_{\ell,\alpha}
	\bigl(
	r,x,\sigma,u_\alpha(r,x,\sigma)
	\bigr)
	\,dW_\ell(r).
	\label{eq:reduction-spatial-martingale}
\end{align}
Here and below the $\sigma$ variable and its integration are absent when
$r_\alpha=0$.  By \eqref{eq:noise-local-bound} and
\eqref{eq:reduction-random-sections}, the right-hand side of
\eqref{eq:reduction-spatial-martingale} defines a process in
\begin{equation}\label{eq:reduction-spatial-martingale-space}
	C
	\bigl(
	[0,T];L^2(J\times K)
	\bigr).
\end{equation}
Set
$C_{\alpha,J,K}:=\{(t,\mathcal X_\alpha(t,x,\sigma)):
(t,x,\sigma)\in[0,T]\times K\times J\}$.
This set is compact, and the sum of the expected squared Hilbert--Schmidt
norms of the integrands is bounded by
$$
T\abs{J}\abs{K}\norm{\rho}_\infty^2
\sup_{(t,y,\lambda)\in C_{\alpha,J,K}\times[-M,M]} \sum_{\ell\geq1}
\abs{b_\ell(t,y,\lambda)}^2
<\infty
$$
by Assumption~\ref{ass:coeff}, specifically the zeroth-order term in
\eqref{eq:noise-local-bound}, applied to the compact spatial projection of
$C_{\alpha,J,K}$ with $R=M$.

For $\psi\in\mathscr D_\alpha$, define its constant-in-time extension to
space--time by
$\widehat\psi(t,x,\lambda):=\psi(x,\lambda)$ for $t\in[0,T]$, and set
$\widehat{\mathscr D}_\alpha
:=\{\widehat\psi:\psi\in\mathscr D_\alpha\}$.
For
$
\varphi
\in
\mathscr T_\alpha
\cup
\widehat{\mathscr D}_\alpha,
$
define
\begin{equation}\label{eq:reduction-scalar-integrand}
	\mathcal I_{\ell,\alpha,\varphi}(\omega,t,\sigma)
	:=
	\int_{\R^{d_\alpha}}
	b_{\ell,\alpha}
	\bigl(
	t,x,\sigma,u_\alpha(\omega,t,x,\sigma)
	\bigr)
	\varphi
	\bigl(
	t,x,u_\alpha(\omega,t,x,\sigma)
	\bigr)
	\,dx.
\end{equation}
For
$\phi\in\mathscr T_\alpha$
and
$\psi\in\mathscr D_\alpha$,
the same estimate, with the compact spatial supports of $\phi$ and $\psi$,
gives continuous $L^2(J)$-valued martingales
\begin{equation}\label{eq:reduction-scalar-martingales}
	\begin{split}
		Z_{\alpha,\phi,J}(t)(\sigma)
		&:=
		\sum_{\ell\geq1}
		\int_0^t
		\mathcal I_{\ell,\alpha,\phi}(r,\sigma)
		\,dW_\ell(r),
		\\
		M_{\alpha,\psi,J}(t)(\sigma)
		&:=
		\sum_{\ell\geq1}
		\int_0^t
		\mathcal I_{\ell,\alpha,\widehat\psi}(r,\sigma)
		\,dW_\ell(r).
	\end{split}
\end{equation}
The parameterized It\^o construction in the proof of
Lemma~\ref{lem:pathwise-slice-equation} yields jointly measurable
representatives of
\eqref{eq:reduction-spatial-martingale} and
\eqref{eq:reduction-scalar-martingales}.  For 
$(\Prob\otimes\mathcal L^{r_\alpha})$-almost every 
$(\omega,\sigma)$, their $\sigma$-sections have, respectively,
continuous paths in $L^2(K)$ and in $\R$.  For prescribed boxes
$\widetilde J\subset J$ and $\widetilde K\subset K$, uniqueness of the
It\^o integrals allows these versions to be chosen so that, up to
indistinguishability,
\begin{align}
	\left.
	\mathscr M_{\alpha,J,K}^{\rho}
	\right|_{\widetilde J\times\widetilde K}
	&=
	\mathscr M_{\alpha,\widetilde J,\widetilde K}^{\rho},
	\notag\\
	\left.
	Z_{\alpha,\phi,J}
	\right|_{\widetilde J}
	&=
	Z_{\alpha,\phi,\widetilde J},
	\qquad
	\left.
	M_{\alpha,\psi,J}
	\right|_{\widetilde J}
	=
	M_{\alpha,\psi,\widetilde J}.
	\label{eq:reduction-martingale-restrictions}
\end{align}

We make the recursive parameter selection explicit.  For fixed $\omega$, let
$\mathcal C_\alpha(\omega)\subset\R^{r_\alpha}$ be the set of $\sigma$ for
which all of the following prescribed properties hold:
\begin{enumerate}[label=\textup{(\alph*)}]
	\item the representatives in
	\eqref{eq:reduction-spatial-martingale} and
	\eqref{eq:reduction-scalar-martingales} have continuous fixed-$\sigma$
	paths in $L^2(K)$ and $\R$, respectively, and satisfy the restriction
	identities for all localization boxes containing $\sigma$;

	\item for every one-step reduction $\alpha\to\beta$, the fixed-$\sigma$
	form of \eqref{eq:scalar-stochastic-fubini} holds for almost every
	$\zeta$, the resulting outgoing scalar sections, including their
	versions defined using constant-in-time test-function extensions, belong
	to $C([0,T];L^2(J))$, agree under
	restriction of $J_\alpha$ and $J$, and have the fixed-$\sigma$ form of
	\eqref{eq:mollified-stochastic-fubini} in $L^2(J_0)$ whenever
	$J_0\Subset J$;

	\item for every one-step reduction $\alpha\to\beta$, every prescribed
	compact interval $J\Subset\R$, and every prescribed compact box
	$K'\Subset\R^{d_\beta}$, the fixed-$\sigma$ section of
	\eqref{eq:sliced-spatial-martingale} belongs to
	$C([0,T];L^2(J\times K'))$, its fixed-$(\sigma,\zeta)$ section belongs
	to $C([0,T];L^2(K'))$ for almost every $\zeta\in J$, and all prescribed
	restriction identities hold.
\end{enumerate}
Here $J$ localizes the next transverse coordinate $\zeta$, whereas $K'$
localizes the remaining active coordinate $w'\in\R^{d_\beta}$.
Only prescribed accumulated-parameter boxes $J_\alpha$ containing $\sigma$
are relevant in these clauses; \textup{(b)}--\textup{(c)} are vacuous when
$d_\alpha=0$.
Here the outgoing processes in \textup{(b)}--\textup{(c)} are the jointly
measurable processes constructed in the proof of
Lemma~\ref{lem:pathwise-slice-equation}; the references are forward because
those constructions must be completed before $\omega$ is fixed.  The graph
$$
\left\{
(\omega,\sigma):
\sigma\in\mathcal C_\alpha(\omega)
\right\}
$$
is $\cF\otimes\cB(\R^{r_\alpha})$-measurable.  Indeed, continuity and
restriction can be expressed by countably many measurable conditions.
More precisely, let $X(\omega,t,\sigma)$ denote any prescribed jointly
measurable representative with values in a separable Hilbert space $H$.
Its rational-time values admit a continuous $H$-valued extension precisely
when
\begin{equation*}
	\lim_{n\to\infty}
	\sup_{\substack{s,t\in\mathbb Q\cap[0,T]\\ \abs{s-t}<1/n}}
	\norm{X(\omega,t,\sigma)-X(\omega,s,\sigma)}_H
	=
	0.
\end{equation*}
For two such continuous extensions, a prescribed restriction identity is
equivalent to
\begin{equation*}
	\sup_{t\in\mathbb Q\cap[0,T]}
	\norm{X_1(\omega,t,\sigma)-X_2(\omega,t,\sigma)}_H
	=
	0.
\end{equation*}
Finally, if $E(\omega,\sigma,\zeta)$ is a jointly measurable nonnegative
error for one of the assertions in \textup{(b)}--\textup{(c)}, extended by
zero outside its prescribed localization, then that assertion holds for
almost every $\zeta$ precisely when
\begin{equation*}
	\int_{-n}^n
	\bigl(1\wedge E(\omega,\sigma,\zeta)\bigr)
	\,d\zeta
	=
	0
	\qquad\text{for every }n\geq1.
\end{equation*}
The rational times, reduction data, test functions, localizations, and error
conditions form countable families.  The graph of $\mathcal C_\alpha$ is
therefore a countable intersection of measurable sets.

Recall that $\alpha\in\mathfrak R$ is a reduction datum,
$d_\alpha$ is its remaining active spatial dimension, and
$r_\alpha=d-d_\alpha$ is the number of accumulated transverse parameters.
The set $\mathcal C_\alpha(\omega)\subset\R^{r_\alpha}$ defined above
consists of the parameters $\sigma$ for which
\textup{(a)}--\textup{(c)} hold.  Define the \textit{recursively admissible}
sets $\mathcal A_\alpha(\omega)$ by induction in increasing $d_\alpha$:
\begin{equation}\label{eq:recursive-admissible-parameters}
	\mathcal A_\alpha(\omega)
	:=
	\mathcal C_\alpha(\omega)
	\cap
	\bigcap_{\beta:\,\alpha\to\beta}
	\left\{
		\sigma:
		\mathcal L^1
		\bigl(
		\R\setminus
		\{\zeta:(\sigma,\zeta)\in\mathcal A_\beta(\omega)\}
		\bigr)
		=
		0
	\right\},
\end{equation}
where the intersection ranges over all one-step reductions
$\alpha\to\beta$ (the prescribed steps from
Subsection~\ref{subsec:countable-reduction}, each with
$d_\beta=d_\alpha-1$ and one additional transverse coordinate).
If $d_\alpha=0$, there is no such reduction, so the
intersection is omitted and
$\mathcal A_\alpha(\omega)=\mathcal C_\alpha(\omega)\subset\R^d$.
This base case is distinct from the original datum $\alpha_0$, for which
$d_{\alpha_0}=d$ and $r_{\alpha_0}=0$.  If the graph of
$\mathcal A_\beta$ is measurable, then, for every $n\geq1$, the map
$$
(\omega,\sigma)
\longmapsto
\int_{-n}^n
\1_{\{(\sigma,\zeta)\notin\mathcal A_\beta(\omega)\}}
\,d\zeta
$$
is measurable.  Thus the zero-section condition in
\eqref{eq:recursive-admissible-parameters}, and hence the graph of
$\mathcal A_\alpha$, is measurable by induction.  This measurability permits
the Fubini induction below, which proves that every such graph has full
product measure and yields the full-probability event
$\Omega_{\mathrm{rec}}$ in
\eqref{eq:recursive-admissibility-event}.  Moreover, if $\omega$ is fixed
and $\sigma\in\mathcal A_\alpha(\omega)$, then
\eqref{eq:recursive-admissible-parameters} gives
$(\sigma,\zeta)\in\mathcal A_\beta(\omega)$ for almost every $\zeta\in\R$
and every one-step reduction $\alpha\to\beta$.
Lemma~\ref{lem:pathwise-slice-equation} uses this section property to
continue the reduction after $(\omega,\sigma)$ has been fixed, without
removing an additional probability-null set.

The graph of $\mathcal C_\alpha$ has full
$(\Prob\otimes\mathcal L^{r_\alpha})$-measure.  For \textup{(a)}, this follows
from the parameterized constructions
\eqref{eq:reduction-spatial-martingale} and
\eqref{eq:reduction-scalar-martingales}, the path-space property
\eqref{eq:reduction-spatial-martingale-space}, and the restriction identities
\eqref{eq:reduction-martingale-restrictions}.  For \textup{(b)} and
\textup{(c)}, the estimates
\eqref{eq:stochastic-fubini-BDG} and
\eqref{eq:fixed-parameter-spatial-martingale-bound}, followed by ordinary
Fubini, give the fixed-$\sigma$ forms of
\eqref{eq:scalar-stochastic-fubini},
\eqref{eq:mollified-stochastic-fubini}, and
\eqref{eq:sliced-spatial-martingale}.  The uniqueness arguments following
\eqref{eq:fixed-parameter-stochastic-fubini-bound} and
\eqref{eq:fixed-parameter-spatial-martingale-bound} show that these versions
coincide on overlaps when $J_\alpha$, $J$, or $K'$ is replaced by a smaller
prescribed localization; these are the restriction identities required in
\textup{(b)}--\textup{(c)}.  All data are countable.  Starting at
$d_\alpha=0$, Fubini's
theorem and
\eqref{eq:recursive-admissible-parameters} therefore show inductively that
the graph of every $\mathcal A_\alpha$ has full product measure.  Applying
Fubini only to these jointly measurable martingale representatives, and
using the countability of all prescribed data, gives
\begin{equation}\label{eq:recursive-admissibility-event}
	\Omega_{\mathrm{rec}}
	:=
	\left\{
		\omega:
		\mathcal L^{r_\alpha}
		\bigl(
		\R^{r_\alpha}\setminus\mathcal A_\alpha(\omega)
		\bigr)
		=
		0
		\text{ for every }\alpha\in\mathfrak R
	\right\},
	\qquad
	\Prob(\Omega_{\mathrm{rec}})=1.
\end{equation}
Here $\R^0=\{\varnothing\}$, so
\eqref{eq:recursive-admissibility-event} implies, for every
$\omega\in\Omega_{\mathrm{rec}}$,
$\varnothing\in\mathcal A_{\alpha_0}(\omega)$.  No kinetic measure or weak
trace indexed by $\sigma$ is selected in
\eqref{eq:recursive-admissible-parameters}.

For each $\rho\in\mathcal R$, choose a countable family
$
\mathscr E_\rho\subset C_c^1([0,T)\times\R^d)
$
whose rational span is dense in the $C^1$ norm on every fixed compact
space--time cylinder.  Let $\mathscr T_{\mathrm{kin}}$ be the rational span
of the tensor products
$
\eta(t,x)\rho(\lambda)
$
with
$
\rho\in\mathcal R
$
and
$
\eta\in\mathscr E_\rho.
$
This is the countable test family used to define
\begin{equation}\label{eq:countable-kinetic-event}
	\Omega_{\mathrm{kin}}
	:=
	\bigcap_{\varphi\in\mathscr T_{\mathrm{kin}}}
	\Omega_{\mathrm{tr},\varphi}.
\end{equation}
By Lemma~\ref{lem:kinetic-with-trace} and the countability of
$\mathscr T_{\mathrm{kin}}$,
$\Prob(\Omega_{\mathrm{kin}})=1$.  For each fixed
$\rho\in\mathcal R$, continuity of the deterministic terms and of
\eqref{eq:spatial-martingale-process} extends the identity in the
space--time factor.  Consequently, after $\omega$ is fixed, the identity
holds for every
$
\eta(t,x)\rho(\lambda),
\quad
\eta\in C_c^1([0,T)\times\R^d),
\quad
\rho\in\mathcal R,
$
and therefore for all translations, dilations, and fixed linear pullbacks of
these tensors used in $\mathfrak R$.

We shall refer to \(\mathscr T_{\mathrm{kin}}\) and, at a reduced
level \(\alpha\), to \(\mathscr T_\alpha\), together with their extensions
in the space--time factor and the prescribed translations, dilations,
mollifications, and fixed linear pullbacks introduced above, as the
prescribed determining family. Its localization to an open interval
\(I\subset(-L,L)\) consists of those members whose
\(\lambda\)-support is compactly contained in \(I\). Whenever a pathwise
kinetic identity is said below to hold on \(I\), this means that it holds
for this localized determining family on the already fixed event; no new
exceptional set is removed.

We now separate the probabilistic choice of martingale representatives from
the subsequent pathwise argument.  Each continuous It\^o representative and
each restriction or stochastic Fubini identity is initially available only
outside a null set that may depend on the reduction datum, localization, or
test function.  The blow-up argument later fixes one $\omega$ and uses all
prescribed martingales simultaneously, so neither new representatives nor
new probability-null sets may be selected at that stage.  We therefore make
all choices now and collect their properties on one full-probability event.

The final two items below refer forward to constructions in
Lemma~\ref{lem:pathwise-slice-equation}, because those later constructions
must already be included in this event.  This is not circular: the
probabilistic construction of those processes uses only the predictable
solution, the deterministic coordinate maps, and
\eqref{eq:noise-local-bound}; membership in $\Omega_*$ is used only after
the representatives have been constructed.  The parameterized It\^o
identities provide jointly measurable stochastic
integrals and justify mollification in the next transverse coordinate
$\zeta$; the sliced processes are the resulting scalar and
$L^2(K')$-valued martingales after $\zeta$ is fixed.
Introduce three events:
\begin{enumerate}[label=\textup{(\alph*)}]
	\item $\Omega_{\mathrm{rep}}$ is the event on which the continuous
	representatives of \eqref{eq:spatial-martingale-process} and
	\eqref{eq:reduction-spatial-martingale}--\eqref{eq:reduction-scalar-martingales},
	together with their stated restriction identities, are fixed for all
	prescribed data;

	\item $\Omega_{\mathrm{Fub}}$ is the event on which the parameterized
	It\^o identities \eqref{eq:scalar-stochastic-fubini} and
	\eqref{eq:mollified-stochastic-fubini} hold in their stated product
	spaces;

	\item $\Omega_{\mathrm{sl}}$ is the event on which the sliced processes
	in \eqref{eq:sliced-time-martingale} and
	\eqref{eq:sliced-spatial-martingale} have their stated continuity and
	restriction properties.
\end{enumerate}
The product-space events $\Omega_{\mathrm{Fub}}$ and
$\Omega_{\mathrm{sl}}$ alone give fixed-$\sigma$ assertions only for almost
every $\sigma$.  The definition of $\mathcal C_\alpha$ and the recursion
\eqref{eq:recursive-admissible-parameters} are what retain those assertions
at every parameter subsequently fixed by the pathwise induction.
For every fixed reduction datum, localization box, test function, and
mollifier index, the Hilbert-space It\^o construction and the parameterized
construction in Lemma~\ref{lem:pathwise-slice-equation} fail only on a null
set.  These choices form a countable family, so the union of their
exceptional sets is null and each event above has probability one.  Hence
\begin{equation}\label{eq:martingale-common-event}
	\Omega_{\mathrm{mar}}
	:=
	\Omega_{\mathrm{rec}}
	\cap
	\Omega_{\mathrm{rep}}
	\cap
	\Omega_{\mathrm{Fub}}
	\cap
	\Omega_{\mathrm{sl}},
	\qquad
	\Prob(\Omega_{\mathrm{mar}})=1.
\end{equation}

Combining this event with the local-mass, weak-trace, and kinetic-identity
events gives
\begin{equation}\label{eq:common-pathwise-event}
	\Omega_*
	:=
	\Omega_m
	\cap
	\Omega_{\mathrm w}
	\cap
	\Omega_{\mathrm{kin}}
	\cap
	\Omega_{\mathrm{mar}}.
\end{equation}
Here $\Omega_m$, defined in \eqref{eq:local-mass-event}, gives simultaneous
local finiteness and the support property from
Definition~\ref{def:kinetic-measure}; $\Omega_{\mathrm w}$, defined in
\eqref{eq:weak-trace-event}, gives the weak trace
\eqref{eq:pathwise-weak-trace}; and $\Omega_{\mathrm{kin}}$, defined in
\eqref{eq:countable-kinetic-event}, fixes
\eqref{eq:kinetic-with-trace} on the determining test function family.  The
event $\Omega_{\mathrm{mar}}$ in \eqref{eq:martingale-common-event} combines
recursive admissibility \eqref{eq:recursive-admissibility-event}, including
the fixed-$\sigma$ properties defining $\mathcal C_\alpha$, the
martingale restriction identity \eqref{eq:martingale-restriction}, the
stochastic Fubini identities \eqref{eq:scalar-stochastic-fubini} and
\eqref{eq:mollified-stochastic-fubini}, and the sliced processes
\eqref{eq:sliced-time-martingale} and
\eqref{eq:sliced-spatial-martingale}.  Their probability-one assertions
follow, respectively, from \eqref{eq:expected-measure-bound} and a countable
exhaustion, Lemma~\ref{lem:weak-trace}, the countability in
\eqref{eq:countable-kinetic-event}, and the countable martingale construction
preceding \eqref{eq:martingale-common-event}.  Their finite intersection
therefore satisfies 
$$
\Prob(\Omega_*)=1.
$$

For $\omega\in\Omega_*$ and $\rho\in\mathcal R$, define
\begin{equation*}
	F^\rho(t,x)
	:=
	\int_{-L}^L
	h(\omega,t,x,\lambda)\rho(\lambda)
	(1,a(\lambda))
	\,d\lambda
\end{equation*}
and, for Borel sets $E\subset(0,T)\times\R^d$,
\begin{equation*}
	\left\langle m^\omega,\rho'\right\rangle_\lambda(E)
	:=
	\int_{E\times(-L,L)}
	\rho'(\lambda)
	\,dm(\omega;t,x,\lambda).
\end{equation*}
Here the subscript $\lambda$ denotes integration only in the kinetic variable;
thus
$
\left\langle m^\omega,\rho'\right\rangle_\lambda
$
is a signed Radon measure on $(0,T)\times\R^d$.  For every
$\omega\in\Omega_*$ and $\rho\in\mathcal R$,
the kinetic identity with the weak trace
\eqref{eq:kinetic-with-trace}, restricted to
test functions supported in $(0,T)\times\R^d\times\R$, first on the fixed
countable determining family and then by continuity of every distributional
term, yields
\begin{equation}\label{eq:pathwise-averaged-identity}
	\diver_{(t,x)}F^\rho
	=
	\partial_t\mathscr M^\rho
	+
	\frac12
	\rho'(u)G^2(t,x,u)
	-
	\left\langle m^\omega,\rho'\right\rangle_\lambda
\end{equation}
in $\mathcal D'((0,T)\times\R^d)$.  For the martingale term, this extension
uses
\begin{equation}\label{eq:pathwise-martingale-pairing}
	\int_0^T
	\left\langle\eta(t),d\mathscr M^\rho(t)\right\rangle
	=
	-
	\int_0^T
	\left\langle
	\partial_t\eta(t),\mathscr M^\rho(t)
	\right\rangle
	\,dt
\end{equation}
for $\eta\in C_c^1((0,T)\times\R^d)$.  Define
$$
H_0^\rho(x)
:=
\int_{-L}^L
h_0^\omega(x,\lambda)\rho(\lambda)
\,d\lambda.
$$
For every $\omega\in\Omega_*$ and $\rho\in\mathcal R$, repeating the
derivation with \eqref{eq:kinetic-with-trace} gives, for every
$\eta\in C_c^1([0,T)\times\R^d)$,
\begin{align}
	&\int_0^T\int_{\R^d}
	F^\rho(t,x)\cdot\nabla_{(t,x)}\eta(t,x)
	\,dx\,dt
	+
	\int_{\R^d}
	H_0^\rho(x)\eta(0,x)
	\,dx
	\notag\\
	&\quad=
	\int_0^T\int_{\R^d}
	\mathscr M^\rho(t,x)\partial_t\eta(t,x)
	\,dx\,dt
	-
	\frac12
	\int_0^T\int_{\R^d}
	\eta(t,x)\rho'(u)G^2(t,x,u)
	\,dx\,dt
	\notag\\
	&\qquad+
	\int_{(0,T)\times\R^d\times(-L,L)}
	\eta(t,x)\rho'(\lambda)
	\,dm(\omega;t,x,\lambda).
	\label{eq:pathwise-averaged-identity-with-trace}
\end{align}
Thus \eqref{eq:pathwise-averaged-identity} and
\eqref{eq:pathwise-averaged-identity-with-trace} may be pulled back by every
translation and dilation associated with every reduction datum
$\alpha\in\mathfrak R$, without removing another probability-null set.

From this point through Section~\ref{sec:pathwise-identification}, fix a
realization $\omega$ in the full-probability event $\Omega_*$ defined by
\eqref{eq:common-pathwise-event}, and suppress it from the notation.  No
further probability-null set is removed.  Subsequences, Lebesgue exceptional
sets, admissible transverse coordinates, weak traces, and the Radon measures
$\widetilde m_\zeta^\omega$ may
depend on this fixed realization.

For $y\in\R^d$ and $\varepsilon>0$, define
\begin{equation*}
	\begin{aligned}
		u^{\varepsilon,y}(\tau,z)
		&:=
		u(\omega,\varepsilon\tau,y+\varepsilon z),
		\\
		h^{\varepsilon,y}(\tau,z,\lambda)
		&:=
		h(\omega,\varepsilon\tau,y+\varepsilon z,\lambda).
	\end{aligned}
\end{equation*}
Then
$h^{\varepsilon,y}(\tau,z,\lambda)
=\1_{\{\lambda<u^{\varepsilon,y}(\tau,z)\}}$.
These functions are defined for $0<\tau<T/\varepsilon$.
All arguments are local in $\tau$, so on every fixed bounded cylinder they
are defined for all sufficiently small $\varepsilon$.

For the fixed realization, write
$
m^\omega:=m(\omega;\cdot)
$
and define the scaling map
$
S_{\varepsilon,y}(\tau,z,\lambda)
:=
\bigl(
\varepsilon\tau,
y+\varepsilon z,
\lambda
\bigr).
$
The rescaled kinetic measure is
$$
\nu^{\varepsilon,y}
=
\nu^{\varepsilon,y}(\omega)
:=
\frac{1}{\varepsilon^d}
\bigl(
S_{\varepsilon,y}^{-1}
\bigr)_\#
m^\omega.
$$
Equivalently, it is characterized by
\begin{equation}\label{eq:pathwise-scaled-measure}
	\int
	\psi(\tau,z,\lambda)
	\,d\nu^{\varepsilon,y}(\tau,z,\lambda)
	:=
	\frac{1}{\varepsilon^d}
	\int
	\psi\left(
	\frac{t}{\varepsilon},
	\frac{x-y}{\varepsilon},
	\lambda
	\right)
	\,dm(\omega;t,x,\lambda)
\end{equation}
for
$\psi\in C_c((0,\infty)\times\R^d\times\R)$.
For $\rho\in\mathcal R$, set
\begin{equation*}
	\cM_{\varepsilon,y}^\rho(\tau,z)
	:=
	\mathscr M^\rho(\varepsilon\tau,y+\varepsilon z),
\end{equation*}
where $\mathscr M^\rho$ is the jointly measurable representative in
\eqref{eq:joint-martingale-representative}.

The next lemma extracts one scale sequence on which the rescaled measure,
martingale processes, and weak trace have simultaneous limits.

\begin{lemma}[Pathwise blow-up subsequences]
\label{lem:pathwise-blowup-subsequence}
Fix a realization $\omega\in\Omega_*$, where $\Omega_*$ is defined in
\eqref{eq:common-pathwise-event}, and let
$
\varepsilon_n\downarrow0.
$
There exist a subsequence, not relabeled, and a set
$$
Y_\omega\subset\R^d,
\qquad
\mathcal L^d(\R^d\setminus Y_\omega)=0,
$$
such that, for every $y\in Y_\omega$, every $R>0$, and every
$\rho\in\mathcal R$, the following convergences hold as $n\to\infty$:
\begin{align}
	\nu^{\varepsilon_n,y}
	\bigl(
	(0,R)\times B_R\times(-L,L)
	\bigr)
	&\longrightarrow0,
	\label{eq:pathwise-defect-vanish}
	\\
	\cM_{\varepsilon_n,y}^\rho
	&\longrightarrow0
	\quad\text{in }
	L^2\bigl((0,R)\times B_R\bigr),
	\label{eq:pathwise-martingale-vanish}
	\\
	h_0^\omega(y+\varepsilon_nz,\lambda)
	&\longrightarrow
	h_0^\omega(y,\lambda)
	\quad\text{in }
	L^1\bigl(B_R\times(-L,L)\bigr).
	\label{eq:pathwise-trace-lebesgue}
\end{align}
The subsequence and the set $Y_\omega$ may depend on $\omega$ and on the
original sequence $(\varepsilon_n)$.
\end{lemma}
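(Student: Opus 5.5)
The plan is to prove the three convergences separately, each for a fixed $R$ and a fixed $\rho$. For each one I show an $L^1_{\loc}$-in-$y$ statement: a quantity $\Phi_n(y)$, nonnegative, with $\int_{B_k}\Phi_n(y)\,dy\to0$ for every ball $B_k$. Such a statement yields almost-everywhere convergence along a subsequence. A diagonal extraction then handles all countably many indices at once: $R\in\N$ (monotone in $R$, so integers suffice), $\rho\in\mathcal R$, and the balls $B_k$. The resulting set $Y_\omega$ is the intersection of countably many full-measure sets.

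\emph{Kinetic measure.} By \eqref{eq:pathwise-scaled-measure},
$$
\Phi_n(y):=\nu^{\varepsilon_n,y}\bigl((0,R)\times B_R\times(-L,L)\bigr)
=\varepsilon_n^{-d}\,m^\omega\bigl(\{(t,x,\lambda):t<\varepsilon_nR,\ \abs{x-y}<\varepsilon_nR\}\bigr).
$$
Integrate over $y\in B_k$ and apply Tonelli. This gives
$$
\int_{B_k}\Phi_n\,dy\leq \abs{B_R}\,m^\omega\bigl((0,\varepsilon_nR)\times B_{k+R}\times[-M,M]\bigr).
$$
Since $\omega\in\Omega_m$, the measure $m^\omega$ is finite on $(0,T)\times B_{k+R}\times[-M,M]$, and the sets $(0,\varepsilon_nR)$ decrease to the empty set. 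Continuity from above therefore gives \eqref{eq:pathwise-defect-vanish} in the $L^1_{\loc}(dy)$ sense. The support condition lets us replace $(-L,L)$ by $[-M,M]$.

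\emph{Martingale.} Change variables in the same way to get
$$
\int_{B_k}\norm{\cM^\rho_{\varepsilon_n,y}}^2_{L^2((0,R)\times B_R)}dy
=\varepsilon_n^{-d-1}\int_0^{\varepsilon_nR}\!\!\int_{B_k}\int_{B_{\varepsilon_nR}(y)}\abs{\mathscr M^\rho(t,x)}^2\,dx\,dy\,dt
\leq \abs{B_R}R\sup_{0\le t\le\varepsilon_nR}\norm{\mathscr M^\rho(t)}^2_{L^2(B_{k+R})}.
$$
On $\Omega_{\mathrm{rep}}$ the path $t\mapsto\mathscr M^\rho(t)$ is continuous into $L^2_{\loc}(\R^d)$ and starts from zero, so the supremum tends to zero. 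Here one must use the jointly measurable representative \eqref{eq:joint-martingale-representative} and check that its $t$-sections agree with the continuous $L^2(B_j)$-valued version for every $t$, or at least for almost every $t$, which suffices inside the time integral. I expect this identification between the pasted joint representative and the continuous version, for the fixed $\omega$, to be the only delicate point. If it can only be guaranteed for a.e.\ $t$, the integral bound is unaffected.

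\emph{Trace.} For the third convergence, \eqref{eq:pathwise-trace-lebesgue}, define
$$
\Phi_n(y):=\int_{B_R}\int_{-L}^L\abs{h_0^\omega(y+\varepsilon_nz,\lambda)-h_0^\omega(y,\lambda)}\,d\lambda\,dz
=\abs{B_R}\int_{-L}^L\fint_{B_{\varepsilon_nR}(y)}\abs{h_0^\omega(x,\lambda)-h_0^\omega(y,\lambda)}\,dx\,d\lambda .
$$
Integrating over $y\in B_k$ gives at most
$$
\int_{B_R}\bigl\|h_0^\omega(\cdot+\varepsilon_nz,\cdot)-h_0^\omega\bigr\|_{L^1(B_k\times(-L,L))}dz .
$$
This tends to zero by continuity of translations in $L^1(B_{k+R}\times(-L,L))$, since $0\le h_0^\omega\le1$ and dominated convergence applies in $z$. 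No Lebesgue-point theorem for vector-valued functions is needed; the $L^1_{\loc}$ argument already yields the a.e.\ subsequence.

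Finally, take a diagonal subsequence over the countable index set $\{(R,\rho,k)\}$, and intersect the resulting full-measure sets of $y$. This gives $Y_\omega$ and proves the lemma. Everything depends on $\omega$ only through the fixed realization, which matches the final remark of the statement.
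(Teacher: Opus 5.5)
Your proof is correct. For the kinetic-measure and martingale convergences you follow the paper's argument essentially verbatim. You average over $y$ in a compact set, apply Tonelli to get a bound in terms of $m^\omega$ on $(0,\varepsilon_nR)\times K'\times[-M,M]$ (respectively in terms of $\norm{\mathscr M^\rho(t)}_{L^2(K')}$ for $t\le\varepsilon_nR$), use continuity from above and continuity at $t=0$, and extract a summable subsequence. The genuine difference is in the trace convergence \eqref{eq:pathwise-trace-lebesgue}. The paper regards $x\mapsto h_0^\omega(x,\cdot)$ as a locally Bochner integrable map into $L^1(-L,L)$ and uses its Lebesgue points. This gives convergence at every such $y$ along the whole sequence, for all $R$ at once, with an exceptional set independent of $(\varepsilon_n)$ and no extraction. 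You instead average in $y$ and use only scalar continuity of translations in $L^1(B_{k+R}\times(-L,L))$. That is more elementary, since no vector-valued differentiation theorem is needed, and it puts all three items into one format: $L^1_{\loc}$ in $y$ followed by diagonal extraction. The cost is an extra subsequence and an exceptional set depending on the sequence. Both are immaterial here, because the lemma already extracts subsequences and allows $Y_\omega$ to depend on $(\varepsilon_n)$. Your caveat about the martingale is well placed. The paper uses without comment that the sections of the jointly measurable representative \eqref{eq:joint-martingale-representative} at the fixed $\omega$ coincide with the continuous $L^2_{\loc}$-valued path. As you observe, agreement for almost every $t$ suffices inside the time integral. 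That agreement holds for almost every $\omega$ by Fubini, so it is absorbed into the event $\Omega_{\mathrm{rep}}$.
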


\begin{proof}
Fix compact sets $K\Subset K'\Subset\R^d$ and $R>0$. For all sufficiently
small $\varepsilon$,
Fubini's theorem and \eqref{eq:pathwise-scaled-measure} give
\begin{equation}\label{eq:pathwise-defect-averaged-estimate}
\begin{aligned}
	&\int_K
	\nu^{\varepsilon,y}
	\bigl(
	(0,R)\times B_R\times(-L,L)
	\bigr)
	\,dy\\
	&\quad=
	\frac{1}{\varepsilon^d}
	\int_{(0,\varepsilon R)\times\R^d\times[-L,L]}
	\mathcal L^d
	\bigl(
	K\cap B_{\varepsilon R}(x)
	\bigr)
	\,dm(\omega;t,x,\lambda)\\
	&\quad\leq
	\abs{B_R}\,
	m\bigl(
	\omega;
	(0,\varepsilon R)\times K'\times[-L,L]
	\bigr).
\end{aligned}
\end{equation}
The measure on the right-hand side of
\eqref{eq:pathwise-defect-averaged-estimate} tends to zero by continuity
from above and the bound
$
m(\omega;(0,T)\times K'\times[-L,L])<\infty,
$
which follows from \eqref{eq:local-mass-event}.
Extract a subsequence for which the left-hand side of
\eqref{eq:pathwise-defect-averaged-estimate} is summable.  Fubini's theorem
then gives
\eqref{eq:pathwise-defect-vanish} for almost every $y\in K$.

Applying Fubini's theorem to
$\mathscr M^\rho(\varepsilon\tau,y+\varepsilon z)$ gives
\begin{align*}
	&\int_K
	\norm{
	\cM_{\varepsilon,y}^\rho
	}_{L^2((0,R)\times B_R)}^2
	\,dy
	\\
	&\quad=
	\int_0^R\int_{B_R}\int_K
	\abs{
	\mathscr M^\rho
	\bigl(
	\varepsilon\tau,
	y+\varepsilon z
	\bigr)
	}^2
	\,dy\,dz\,d\tau
	\\
	&\quad\leq
	\abs{B_R}
	\int_0^R
	\norm{
	\mathscr M^\rho(\varepsilon\tau)
	}_{L^2(K')}^2
	\,d\tau.
\end{align*}
For the fixed realization, the right-hand side tends to zero because
$\mathscr M^\rho\in C([0,T];L^2(K'))$ and
$\mathscr M^\rho(0)=0$.
A further summable extraction gives
\eqref{eq:pathwise-martingale-vanish} for almost every $y\in K$.

Finally, regard $x\mapsto h_0^\omega(x,\cdot)$ as a locally integrable
map with values in the separable Banach space
$L^1(-L,L)$.  At every Bochner--Lebesgue point $y$ of this map,
\begin{align*}
	&\int_{B_R}\int_{-L}^L
	\abs{
	h_0^\omega(y+\varepsilon z,\lambda)
	-
	h_0^\omega(y,\lambda)
	}
	\,d\lambda\,dz
	\\
	&\quad=
	\frac{1}{\varepsilon^d}
	\int_{B_{\varepsilon R}(y)}
	\norm{
	h_0^\omega(x,\cdot)
	-
	h_0^\omega(y,\cdot)
	}_{L^1(-L,L)}
	\,dx
	\longrightarrow0.
\end{align*}
This proves \eqref{eq:pathwise-trace-lebesgue} for almost every $y$ without
extraction.

A diagonal summable extraction over a countable exhaustion of space, positive
rational values of $R$, and $\rho\in\mathcal R$ gives one subsequence for
which \eqref{eq:pathwise-defect-vanish},
\eqref{eq:pathwise-martingale-vanish}, and
\eqref{eq:pathwise-trace-lebesgue} hold simultaneously.  Each assertion for
an arbitrary $R>0$ follows by comparison with a larger rational radius.
\end{proof}

\subsection{Panov's condition for the rescaled truncations}

Set $\Phi(r):=(r,f(r))\in\R^{d+1}$, so that
$\Phi'(r)=(1,a(r))$, where $a=f'$.

We verify Panov's condition \eqref{eq:panov-C} for every truncated blow-up by
using the averaged distributional identity
\eqref{eq:pathwise-averaged-identity}.

\begin{proposition}[Panov's condition for a truncated blow-up]
\label{prop:pathwise-condition-C}
Fix a realization $\omega\in\Omega_*$, where $\Omega_*$ is defined in
\eqref{eq:common-pathwise-event}.  Let $I\subset(-L,L)$ be open and let
$[a,b]\Subset I$.
Assume that \eqref{eq:kinetic-with-trace} holds on \(I\) for the
prescribed determining family.  Let
$(\varepsilon_n)$ and $Y_\omega$ be as in
Lemma~\ref{lem:pathwise-blowup-subsequence}, fix $y\in Y_\omega$, and set
$v_n(\tau,z):=T_{a,b}(u^{\varepsilon_n,y}(\tau,z))$.
Then, for every $k\in\R$,
\begin{equation}\label{eq:pathwise-panov-production}
	\diver_{(\tau,z)}
	\left[
	\1_{\{v_n>k\}}
	\bigl(
	\Phi(v_n)-\Phi(k)
	\bigr)
	\right]
\end{equation}
is precompact in $H^{-1}_{\loc}((0,\infty)\times\R^d)$.
Indeed, the distributions in
\eqref{eq:pathwise-panov-production} converge to zero locally in
$H^{-1}$.
\end{proposition}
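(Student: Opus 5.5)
We reduce the one-sided entropy production of $v_n$ to the averaged kinetic identity \eqref{eq:pathwise-averaged-identity} by an integration in $\lambda$, rescale, and then show that every term on the right-hand side tends to zero, using Lemma~\ref{lem:murat-interpolation} to upgrade the convergence to $H^{-1}_{\loc}$.

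\emph{Step 1: kinetic representation.} With $h^{\varepsilon,y}=\1_{\{\lambda<u^{\varepsilon,y}\}}$ we have, pointwise,
\[
\1_{\{v_n>k\}}\bigl(\Phi(v_n)-\Phi(k)\bigr)
=\int_{\R}h^{\varepsilon_n,y}(\tau,z,\lambda)\,\1_{(k\vee a,\,b)}(\lambda)\,\Phi'(\lambda)\,d\lambda ,
\]
since $\Phi(T_{a,b}(u))-\Phi(T_{a,b}(k))$ is the integral of $\Phi'$ over $(T_{a,b}k,T_{a,b}u)$ when $u>k$. For $k\ge b$ this vanishes. For $k<a$ we replace $k$ by $a$; the constant term $\Phi(a)-\Phi(k)$ then contributes zero divergence on the set where it matters, because $\1_{\{v_n>k\}}=1$ there. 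If $a\le k<b$ the weight is $\1_{(k,b)}$. So it suffices to treat $\rho=\1_{(c,e)}$ with $[c,e]\subset[a,b]\Subset I$.

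Approximate $\1_{(c,e)}$ by $\rho_j=\rho_j^{c,e,I}\in\mathcal R$ from \eqref{eq:cutoff-approximation}. The fluxes $F^{\rho_j}$ differ from the target flux by at most $C\norm{\rho_j-\1_{(c,e)}}_{L^1}$ in $L^\infty$, uniformly in $n$. Their divergences therefore differ by $o(1)$ in $H^{-1}_{\loc}$ as $j\to\infty$, uniformly in $n$. It is thus enough to prove that, for each fixed $\rho\in\mathcal R$ with $\supp\rho\Subset I$, the rescaled divergence $\diver_{(\tau,z)}F^\rho_n$ tends to $0$ in $H^{-1}_{\loc}$, where $F^\rho_n(\tau,z)=F^\rho(\varepsilon_n\tau,y+\varepsilon_nz)$. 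A diagonal argument then closes the reduction.

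\emph{Step 2: rescaled identity.} Pulling \eqref{eq:pathwise-averaged-identity} back by the scaling $(t,x)=(\varepsilon\tau,y+\varepsilon z)$ is allowed, since it holds on $I$ for the determining family. Both sides scale by the same factor $\varepsilon$, so on $(0,R)\times B_R$ we get
\[
\diver_{(\tau,z)}F^\rho_n=\partial_\tau\cM^\rho_{\varepsilon_n,y}
+\tfrac{\varepsilon_n}{2}\rho'(u^{\varepsilon_n,y})G^2(\varepsilon_n\tau,y+\varepsilon_nz,u^{\varepsilon_n,y})
-\langle\nu^{\varepsilon_n,y},\rho'\rangle_\lambda .
\]
The exact powers of $\varepsilon$ are to be checked: the martingale enters as $\mathscr M^\rho(\varepsilon\tau,\cdot)$ and its $\tau$-derivative absorbs one factor $\varepsilon$, while the measure is normalized by $\varepsilon^{-d}$ as in \eqref{eq:pathwise-scaled-measure}.

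\emph{Step 3: each term vanishes.} The martingale term is the $\tau$-derivative of a sequence tending to $0$ in $L^2_{\loc}$ by \eqref{eq:pathwise-martingale-vanish}, so it tends to $0$ in $H^{-1}_{\loc}$. The Itô term is $O(\varepsilon_n)$ in $L^\infty_{\loc}$ by \eqref{eq:noise-local-bound}, hence tends to $0$ in $H^{-1}_{\loc}$ by compact embedding. The measure term has total variation at most $\norm{\rho'}_\infty\,\nu^{\varepsilon_n,y}((0,R)\times B_R\times(-L,L))\to0$ by \eqref{eq:pathwise-defect-vanish}. It therefore tends to $0$ in $\mathcal D'$, and in fact in total variation. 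The left-hand side is bounded in $W^{-1,\infty}_{\loc}$ because $F^\rho_n$ is uniformly bounded. Lemma~\ref{lem:murat-interpolation}, applied on balls in $(0,\infty)\times\R^d$, gives $\diver F^\rho_n\to0$ in $H^{-1}_{\loc}$.

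\emph{Main obstacle.} The delicate point is Step 1 together with the uniform-in-$n$ control of the $\rho_j$ approximation. One must check that the error $\diver(F^{\rho_j}_n-F^{\1_{(c,e)}}_n)$ is small in $H^{-1}_{\loc}$ uniformly in $n$; this follows from the uniform $L^\infty$ bound on the flux difference, since $L^2_{\loc}$-small fluxes give $H^{-1}_{\loc}$-small divergences. One must also verify the case distinctions in $k$ relative to $[a,b]$ and confirm that the scaling powers in Step 2 match.
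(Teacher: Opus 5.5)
Your proposal is correct and follows essentially the same route as the paper's proof. The steps match: the one-sided entropy flux is written as $\int h_n\,\1_{(c,e)}\Phi'\,d\lambda$; $\1_{(c,e)}$ is approximated by $\rho_j^{c,e,I}$ uniformly in $n$ using $\norm{\diver G}_{H^{-1}}\leq\norm{G}_{L^2}$; the averaged identity is rescaled; and Murat's interpolation lemma is applied, with the martingale and It\^o terms tending to zero in $H^{-1}$ and the measure term in total variation.

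There are two minor slips. Your displayed identity in Step~1 is exact only for $k\geq a$; for $k<a$ it is off by the constant vector $\Phi(a)-\Phi(k)$, which has zero divergence, as you note. The It\^o term needs only the continuous embedding $L^2\subset H^{-1}$, not a compact one.
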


\begin{proof}
Write $h_n:=h^{\varepsilon_n,y}$.
For $k\in[a,b]$, one has
\begin{equation}\label{eq:pathwise-truncated-flux-identity}
	\1_{\{v_n>k\}}
	\bigl(
	\Phi(v_n)-\Phi(k)
	\bigr)
	=
	\int_k^b
	h_n(\tau,z,\lambda)
	\Phi'(\lambda)
	\,d\lambda.
\end{equation}
For $k<a$, the vector-valued expression in
\eqref{eq:pathwise-panov-production} differs by a constant vector from
\begin{equation}\label{eq:pathwise-Phi-vn}
\Phi(v_n)
=
\Phi(a)
+
\int_a^b
h_n(\tau,z,\lambda)
\Phi'(\lambda)
\,d\lambda,
\end{equation}
whereas it vanishes for $k\geq b$.  It is therefore enough to treat
velocity averages over compact subintervals of $I$.

Let $\rho\in\mathcal R$ satisfy $\supp\rho\Subset I$, and define
$$
F_n^\rho(\tau,z)
:=\int_I
h_n(\tau,z,\lambda)
\rho(\lambda)
\Phi'(\lambda)
\,d\lambda.
$$
For a Borel set $E$ in the $(\tau,z)$-variables, write
$$
\left\langle
\nu^{\varepsilon_n,y},
\rho'
\right\rangle_\lambda(E)
:=
\int_{E\times I}
\rho'(\lambda)
\,d\nu^{\varepsilon_n,y}(\tau,z,\lambda).
$$

Pulling back \eqref{eq:pathwise-averaged-identity} by
$(t,x)=(\varepsilon_n\tau,y+\varepsilon_nz)$
(i.e., testing it with
$
\eta(t/\varepsilon_n,(x-y)/\varepsilon_n)
$,
changing variables to $(\tau,z)$, and 
dividing by $\varepsilon_n^d$) gives
\begin{align}
	\diver_{(\tau,z)}F_n^\rho
	&=
	\partial_\tau
	\cM_{\varepsilon_n,y}^\rho
	+
	\frac{\varepsilon_n}{2}
	\rho'
	\bigl(
	u^{\varepsilon_n,y}
	\bigr)
	G^2
	\bigl(
	\varepsilon_n\tau,
	y+\varepsilon_nz,
	u^{\varepsilon_n,y}
	\bigr)
	-
	\left\langle
	\nu^{\varepsilon_n,y},
	\rho'
	\right\rangle_\lambda
	\label{eq:rescaled-averaged-identity}
\end{align}
in the sense of distributions.

Fix a bounded cylinder $Q\Subset(0,\infty)\times\R^d$.
Define on $Q$
\begin{align*}
	A_n
	&:=
	\partial_\tau
	\cM_{\varepsilon_n,y}^\rho
	+
	\frac{\varepsilon_n}{2}
	\rho'\bigl(u^{\varepsilon_n,y}\bigr)
	G^2\bigl(
	\varepsilon_n\tau,
	y+\varepsilon_nz,
	u^{\varepsilon_n,y}
	\bigr),
	\\
	B_n
	&:=
	-
	\left\langle
	\nu^{\varepsilon_n,y},
	\rho'
	\right\rangle_\lambda,
	\\
	D_n
	&:=
	\diver_{(\tau,z)}F_n^\rho
	=
	A_n+B_n.
\end{align*}
By \eqref{eq:pathwise-martingale-vanish},
$$
\partial_\tau
\cM_{\varepsilon_n,y}^\rho
\longrightarrow0
\quad\text{in }H^{-1}(Q).
$$
The second summand in $A_n$ tends to zero in $L^2(Q)$ by the factor
$\varepsilon_n$ and \eqref{eq:noise-local-bound}.  Hence
$
A_n\to0
$
in $H^{-1}(Q)$.  Moreover,
$$
\norm{B_n}_{\cM(Q)}
\leq
\norm{\rho'}_\infty
\nu^{\varepsilon_n,y}\bigl(Q\times(-L,L)\bigr)
\longrightarrow0
$$
by \eqref{eq:pathwise-defect-vanish}.

The preceding two convergences verify
\eqref{eq:murat-zero-assumptions} with $F_n=D_n$: indeed,
$A_n\to0$ in $H^{-1}(Q)$ and $B_n\to0$ even in the measure norm.
Since $F_n^\rho$ is uniformly bounded, the sequence
$D_n=\diver_{(\tau,z)}F_n^\rho$ is bounded in $W^{-1,\infty}(Q)$.
Lemma~\ref{lem:murat-interpolation} therefore gives
$$
\diver_{(\tau,z)}F_n^\rho
\longrightarrow0
\quad\text{in }H^{-1}(Q).
$$

Now let
$
[c,{e}]\Subset I
$
and choose the approximations
$\rho_j^{c,{e},I}$ from
\eqref{eq:cutoff-approximation}.  Set
$$
F_n^{c,{e}}
:=
\int_c^{{e}}
h_n(\lambda)\Phi'(\lambda)
\,d\lambda.
$$
Uniformly in $n$,
\begin{align*}
	\norm{
	F_n^{\rho_j^{c,{e},I}}
	-
	F_n^{c,{e}}
	}_{L^\infty(Q)}
	&\leq
	\norm{\Phi'}_{L^\infty(I_0)}
	\norm{
	\rho_j^{c,{e},I}
	-
	\1_{(c,{e})}
	}_{L^1(\R)}
	\longrightarrow0.
\end{align*}
Consequently,
\begin{align*}
	\norm{
	\diver
	\left(
	F_n^{\rho_j^{c,{e},I}}
	-
	F_n^{c,{e}}
	\right)
	}_{H^{-1}(Q)}
	&\leq
	\norm{
	F_n^{\rho_j^{c,{e},I}}
	-
	F_n^{c,{e}}
	}_{L^2(Q)}
	\longrightarrow0
\end{align*}
as $j\to\infty$, uniformly in $n$.  Since, for every fixed $j$,
$\diver F_n^{\rho_j^{c,{e},I}}\to0$ in $H^{-1}(Q)$, we conclude that
$\diver F_n^{c,{e}}\to0$ in $H^{-1}(Q)$.

For $k\in[a,b)$, the convergence
$
\diver F_n^{k,b}\to0
$
in $H^{-1}(Q)$ and \eqref{eq:pathwise-truncated-flux-identity} give
\eqref{eq:pathwise-panov-production}.  The case $k=b$ is trivial.
For $k<a$, use \eqref{eq:pathwise-Phi-vn}; for $k>b$, the expression in
\eqref{eq:pathwise-panov-production} vanishes.  Since $Q$ was arbitrary,
the convergence asserted in
Proposition~\ref{prop:pathwise-condition-C} follows.
\end{proof}

\subsection{The weak limit of a blow-up}

The following proposition identifies every weak-$*$ blow-up limit directly
from the kinetic identity with the weak trace
\eqref{eq:kinetic-with-trace}.

\begin{proposition}[Pathwise weak limit of a blow-up]
\label{prop:pathwise-weak-blowup-limit}
Fix a realization \(\omega\in\Omega_*\), where \(\Omega_*\) is defined in
\eqref{eq:common-pathwise-event}, and let
\[
I\subseteq(-L,L)
\]
be an open interval. Assume that \eqref{eq:kinetic-with-trace} holds on
\(I\) for the prescribed determining family.

Let \((\varepsilon_n)\) and \(Y_\omega\) be as in
Lemma~\ref{lem:pathwise-blowup-subsequence}, fix \(y\in Y_\omega\), and
suppose that a subsequence of \(h^{\varepsilon_n,y}\) converges weak-\(*\)
in
\[
L^\infty_{\loc}
\bigl(
(0,\infty)\times\R^d\times I
\bigr)
\]
to a function \(\overline h\). Then
\begin{equation}\label{eq:pathwise-weak-blowup-constant}
	\overline h(\tau,z,\lambda)
	=
	h_0^\omega(y,\lambda)
\end{equation}
for almost every
\[
(\tau,z,\lambda)
\in
(0,\infty)\times\R^d\times I.
\]
In particular, taking \(I=(-L,L)\) gives the conclusion on the entire
kinetic interval.
\end{proposition}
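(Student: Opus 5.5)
We rescale the kinetic identity with the weak trace, \eqref{eq:kinetic-with-trace}, and pass to the limit along the blow-up subsequence. The limit $\overline h$ will satisfy a deterministic, source-free transport equation on the half-space $\tau>0$ with initial datum $h_0^\omega(y,\lambda)$. Since that datum is constant in $z$, the limit is forced to be the datum itself.

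First, test \eqref{eq:kinetic-with-trace} with
\[
\varphi(t,x,\lambda)=\eta\bigl(t/\varepsilon_n,(x-y)/\varepsilon_n\bigr)\rho(\lambda),
\]
where $\eta\in C_c^1([0,\infty)\times\R^d)$ and $\rho\in\mathcal R$ with $\supp\rho\Subset I$. This is the pulled-back version of \eqref{eq:pathwise-averaged-identity-with-trace}, which is available on $\Omega_*$ without removing a new null set. After the change of variables $(t,x)=(\varepsilon_n\tau,y+\varepsilon_n z)$ and division by $\varepsilon_n^d$, we obtain
\begin{align*}
&\int_0^\infty\!\!\int_{\R^d}\!\int_I h_n\rho\,(\partial_\tau\eta+a(\lambda)\cdot\nabla_z\eta)\,d\lambda\,dz\,d\tau
+\int_{\R^d}\!\int_I h_0^\omega(y+\varepsilon_n z,\lambda)\rho(\lambda)\eta(0,z)\,d\lambda\,dz\\
&\quad=\int\!\!\int \cM^\rho_{\varepsilon_n,y}\,\partial_\tau\eta
-\frac{\varepsilon_n}{2}\int\!\!\int\eta\,\rho'(u^{\varepsilon_n,y})G^2
+\int\eta\rho'\,d\nu^{\varepsilon_n,y}.
\end{align*}
The three right-hand terms tend to zero, by \eqref{eq:pathwise-martingale-vanish}, by the factor $\varepsilon_n$ together with \eqref{eq:noise-local-bound}, and by \eqref{eq:pathwise-defect-vanish}, respectively. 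The initial term converges to $\int\int h_0^\omega(y,\lambda)\rho(\lambda)\eta(0,z)$ by \eqref{eq:pathwise-trace-lebesgue}. The first term converges by the weak-$*$ convergence $h_n\to\overline h$, since $\rho(\partial_\tau\eta+a\cdot\nabla_z\eta)$ is an $L^1$ function with compact support. One point needs care: weak-$*$ convergence in $L^\infty_{\loc}((0,\infty)\times\R^d\times I)$ does not control a neighbourhood of $\tau=0$. Since $0\le h_n\le1$ uniformly, the contribution of $(0,\delta)$ is $O(\delta)$, so letting $\delta\to 0$ after $n\to\infty$ handles it.

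Next, use the density of $\mathcal R$ in $C_c^1$ and linearity to obtain the limit identity for all tensors $\eta(\tau,z)\rho(\lambda)$, and hence for all test functions $\varphi\in C_c^1([0,\infty)\times\R^d\times I)$. Set $g:=\overline h-h_0^\omega(y,\lambda)$. Because $h_0^\omega(y,\lambda)$ is independent of $(\tau,z)$, the constant satisfies the transport equation with that same datum. Subtracting, $g$ satisfies, for every $\varphi$,
\[
\int_0^\infty\!\!\int\!\!\int g\,(\partial_\tau\varphi+a(\lambda)\cdot\nabla_z\varphi)=0,
\]
with no boundary term. This is a weak solution of linear free transport $\partial_\tau g+a(\lambda)\cdot\nabla_z g=0$ with zero initial datum, taken for each fixed $\lambda$.

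Finally, we show $g=0$, which is the main step. Since $a$ is merely continuous, the plan is to avoid the $\lambda$-regularity of $a$ and argue by duality on each compact subinterval. Given $\chi\in C_c^1((0,\infty)\times\R^d\times I)$, solve the backward problem $\partial_\tau\varphi+a(\lambda)\cdot\nabla_z\varphi=\chi$, $\varphi(S)=0$, explicitly by
\[
\varphi(\tau,z,\lambda)=-\int_\tau^S\chi\bigl(s,z+(s-\tau)a(\lambda),\lambda\bigr)\,ds .
\]
This $\varphi$ is compactly supported and $C^1$ in $(\tau,z)$, but only continuous in $\lambda$. That is enough: $\lambda$ enters only as a parameter, and we can mollify in $\lambda$ or approximate $a$ uniformly by smooth $a_k$ with an $O(\|a-a_k\|_\infty)$ error, using $|g|\le1$. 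This gives $\int g\chi=0$ for all $\chi$, hence $g=0$ almost everywhere, which is \eqref{eq:pathwise-weak-blowup-constant}. Taking $I=(-L,L)$ then gives the conclusion on the entire kinetic interval.
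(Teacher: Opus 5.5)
Your proposal is correct and follows the paper's proof. You use the same rescaled tensor test functions, kill the martingale, It\^o-correction and kinetic-measure terms with \eqref{eq:pathwise-martingale-vanish}, the factor $\varepsilon_n$ and \eqref{eq:pathwise-defect-vanish}, and handle the boundary term with \eqref{eq:pathwise-trace-lebesgue}, arriving at free transport with the $(\tau,z)$-independent datum $h_0^\omega(y,\lambda)$. Your $O(\delta)$ remark near $\tau=0$ only makes explicit a step the paper leaves tacit.

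The one difference is the final uniqueness step. The paper straightens characteristics via $q(\tau,\zeta,\lambda)=\overline h(\tau,\zeta+a(\lambda)\tau,\lambda)$ for almost every $\lambda$, whereas you test against the explicit backward solution, mollified in $\lambda$ because $a$ is only continuous; both arguments are valid.
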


\begin{proof}

Let \(I\) be the interval from the statement. First take a tensor-product
test function
$$
\psi(\tau,z,\lambda)
=
\eta(\tau,z)\rho(\lambda),
\qquad
\rho\in\mathcal R,
\qquad
\supp\rho\Subset I,
$$
where $\eta\in C_c^1([0,\infty)\times\R^d)$.  Pull back
\eqref{eq:pathwise-averaged-identity-with-trace}, or equivalently apply
\eqref{eq:kinetic-with-trace} with the test function
$$
\varphi_n(t,x,\lambda)
:=
\psi\left(
\frac{t}{\varepsilon_n},
\frac{x-y}{\varepsilon_n},
\lambda
\right).
$$
For all sufficiently large $n$, $\varphi_n$ is admissible in
\eqref{eq:kinetic-with-trace}.  Divide that equation by
$\varepsilon_n^d$ and perform the change of variables
$$
t=\varepsilon_n\tau,
\qquad
x=y+\varepsilon_nz.
$$

The boundary term becomes
$$
\int_{\R^d}\int_I
h_0^\omega(y+\varepsilon_nz,\lambda)
\psi(0,z,\lambda)
\,d\lambda\,dz
$$
and converges, by
\eqref{eq:pathwise-trace-lebesgue}, to
$$
\int_{\R^d}\int_I
h_0^\omega(y,\lambda)
\psi(0,z,\lambda)
\,d\lambda\,dz.
$$
The It\^o correction contains the factor $\varepsilon_n$ and therefore
tends to zero.  The kinetic-measure term tends to zero by
\eqref{eq:pathwise-defect-vanish}.

For the stochastic term, on a bounded spatial
set containing the support of $\eta$, the rescaled stochastic term can be
written as
$$
\int_0^\infty
\left\langle
\eta(\tau,\cdot),
d_\tau
\cM_{\varepsilon_n,y}^\rho(\tau,\cdot)
\right\rangle_{L^2_z}.
$$
Since $\eta$ is compactly supported in $\tau$ and
$\cM_{\varepsilon_n,y}^\rho(0,\cdot)=0$,
Hilbert-space integration by parts gives
$$
\int_0^\infty
\left\langle
\eta(\tau),
d_\tau
\cM_{\varepsilon_n,y}^\rho(\tau)
\right\rangle
=
-
\int_0^\infty
\left\langle
\partial_\tau\eta(\tau),
\cM_{\varepsilon_n,y}^\rho(\tau)
\right\rangle
\,d\tau.
$$
The right-hand side tends to zero by
\eqref{eq:pathwise-martingale-vanish}.

Passing to the weak-$*$ limit gives
\begin{align}
	&\int_0^\infty
	\int_{\R^d}
	\int_I
	\overline h(\tau,z,\lambda)
	\bigl(
	\partial_\tau\psi(\tau,z,\lambda)
	+
	a(\lambda)\cdot\nabla_z\psi(\tau,z,\lambda)
	\bigr)
	\,d\lambda\,dz\,d\tau
	\\
	&\qquad+
	\int_{\R^d}
	\int_I
	h_0^\omega(y,\lambda)
	\psi(0,z,\lambda)
	\,d\lambda\,dz
	=
	0
\label{eq:homogeneous-blowup-limit}
\end{align}
for the tensor-product determining family.  Density extends
\eqref{eq:homogeneous-blowup-limit} to
finite linear combinations and then to every test function in
$C_c^1\bigl(
[0,\infty)\times\R^d\times I
\bigr)$. 
Equation~\eqref{eq:homogeneous-blowup-limit} 
is the weak formulation, for 
almost every $\lambda\in I$, of
$$
\partial_\tau\overline h
+
a(\lambda)\cdot\nabla_z\overline h
=
0
$$
on $(0,\infty)\times\R^d$, with initial value
$\overline h(0,z,\lambda)=h_0^\omega(y,\lambda)$.
Set
$$
q(\tau,\zeta,\lambda)
:=
\overline h
\bigl(
\tau,
\zeta+a(\lambda)\tau,
\lambda
\bigr).
$$
Then $\partial_\tau q=0$ in the sense of distributions, and its boundary
value is the constant $h_0^\omega(y,\lambda)$. Hence
$q(\tau,\zeta,\lambda)=h_0^\omega(y,\lambda)$ almost everywhere, which proves
\eqref{eq:pathwise-weak-blowup-constant}.
\end{proof}

We shall also need the following deterministic consequence of a known strong
trace when the same scale sequence is viewed from almost every spatial
center.

\begin{lemma}[A pathwise strong trace and its blow-ups]
\label{lem:pathwise-trace-to-blowup}
Let $v\in L^\infty((0,T)\times\R^d)$ and
$v_0\in L^1_{\loc}(\R^d)$, and suppose that, for every compact
$K\Subset\R^d$,
$$
\esslim_{t\downarrow0}
\int_K
\abs{v(t,x)-v_0(x)}
\,dx
=
0.
$$
Given any sequence $\varepsilon_n\downarrow0$, one can extract a
subsequence such that, for almost every
$y\in\R^d$,
\begin{equation}\label{eq:pathwise-strong-trace-blowup}
	v(\varepsilon_n\tau,y+\varepsilon_nz)
	\longrightarrow
	v_0(y)
	\quad\text{in }
	L^1_{\loc}
	\bigl(
	(0,\infty)\times\R^d
	\bigr).
\end{equation}
The extraction can be made simultaneously for a countable family of pairs
$(v,v_0)$ satisfying these hypotheses.  This lemma is entirely deterministic.
\end{lemma}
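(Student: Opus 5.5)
The plan is to reduce \eqref{eq:pathwise-strong-trace-blowup} to an averaged estimate in the center $y$, exactly as in the proof of Lemma~\ref{lem:pathwise-blowup-subsequence}, and then use a summable extraction. Fix a compact $K\Subset\R^d$, radius $R>0$, and set $K':=\{x:\operatorname{dist}(x,K)\le R\}$. For $\varepsilon<T/R$ I will estimate
\begin{align*}
&\int_K\int_0^R\int_{B_R}\abs{v(\varepsilon\tau,y+\varepsilon z)-v_0(y)}\,dz\,d\tau\,dy\\
&\quad\le\int_K\int_0^R\int_{B_R}\abs{v(\varepsilon\tau,y+\varepsilon z)-v_0(y+\varepsilon z)}\,dz\,d\tau\,dy
+\int_K\int_0^R\int_{B_R}\abs{v_0(y+\varepsilon z)-v_0(y)}\,dz\,d\tau\,dy .
\end{align*}

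For the first term I will use Fubini and the substitution $x=y+\varepsilon z$ in $y$ for fixed $(\tau,z)$. This bounds it by
\[
\abs{B_R}\int_0^R\int_{K'}\abs{v(\varepsilon\tau,x)-v_0(x)}\,dx\,d\tau
\le R\abs{B_R}\operatorname*{ess\,sup}_{0<t<\varepsilon R}\int_{K'}\abs{v(t,x)-v_0(x)}\,dx .
\]
The measurability needed for Fubini comes from joint measurability of $v$. The inequality holds because, for almost every $\tau\in(0,R)$, the time $\varepsilon\tau$ avoids the exceptional null set, so the inner integral is bounded by the essential supremum. By the hypothesis, the right-hand side tends to zero as $\varepsilon\downarrow0$.

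For the second term I will note that it equals $R\int_{B_R}\norm{v_0(\cdot+\varepsilon z)-v_0}_{L^1(K)}\,dz$. This tends to zero by continuity of translations in $L^1(K')$, since $v_0\in L^1_{\loc}$, with dominated convergence in $z$.

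Hence the $L^1(K)$-norm in $y$ of
\[
E_\varepsilon(y):=\norm{v(\varepsilon\,\cdot,y+\varepsilon\,\cdot)-v_0(y)}_{L^1((0,R)\times B_R)}
\]
tends to zero along any sequence $\varepsilon_n\downarrow0$. Passing to a subsequence with $\sum_n\norm{E_{\varepsilon_n}}_{L^1(K)}<\infty$ gives $\sum_nE_{\varepsilon_n}(y)<\infty$ for a.e.\ $y\in K$, and therefore $E_{\varepsilon_n}(y)\to0$ for a.e.\ $y\in K$.

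The last step is a diagonal extraction. I will take $K=K_j$, $R\in\N$, and the countable family of pairs $(v,v_0)$, and pass to one subsequence along which all these summabilities hold; a further subsequence of a summable sequence stays summable. The union of the countably many exceptional $y$-sets is null. Every compact subset of $(0,\infty)\times\R^d$ lies in some $(0,R)\times B_R$, so this yields $L^1_{\loc}$ convergence, and boundedness of $v$ is not even needed.

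The only delicate point is the essential-supremum bound in the first term, because the convergence hypothesis holds only outside a null set of times. That bound is harmless precisely because we integrate in $\tau$.
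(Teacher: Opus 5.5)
Your proposal is correct and follows the paper's proof essentially step by step. It splits through $v_0(y+\varepsilon z)$, changes variables $x=y+\varepsilon z$ to bound the first term by $R\abs{B_R}\operatorname*{ess\,sup}_{0<t<\varepsilon R}\int_{K'}\abs{v(t,x)-v_0(x)}\,dx$, and uses translation continuity of $v_0$ in $L^1_{\loc}$ for the second term, followed by a summable extraction with Fubini and a diagonal extraction over compact sets, radii, and the countable family of pairs (the inclusion $K+\varepsilon z\subset K'$ also needs $\varepsilon\le 1$, which is automatic along $\varepsilon_n\downarrow0$).
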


\begin{proof}
Fix $K\Subset K'\Subset\R^d$ and $R>0$. Splitting the difference through
$v_0(y+\varepsilon_nz)$ and changing variables gives
\begin{align}
	&\int_K\int_0^R\int_{B_R}
	\abs{v(\varepsilon_n\tau,y+\varepsilon_nz)
	-v_0(y)}
	\,dz\,d\tau\,dy
	\notag\\
	&\quad\leq
	\abs{B_R}
	\int_0^R\int_{K'}
	\abs{v(\varepsilon_n\tau,x)-v_0(x)}
	\,dx\,d\tau
	\notag\\
	&\qquad+
	R
	\int_{B_R}\int_K
	\abs{
	v_0(y+\varepsilon_nz)
	-v_0(y)}
	\,dy\,dz.
	\label{eq:trace-blowup-averaged-estimate}
\end{align}
The first term is bounded by
$$
R\abs{B_R}
\operatorname*{ess\,sup}_{0<t<\varepsilon_nR}
\int_{K'}
\abs{v(t,x)-v_0(x)}
\,dx
$$
and therefore tends to zero.  The second term tends to zero by translation
continuity in $L^1_{\loc}(\R^d)$.

Choose a subsequence for which the left-hand side of
\eqref{eq:trace-blowup-averaged-estimate} is summable.  Fubini's theorem then
gives \eqref{eq:pathwise-strong-trace-blowup} for almost every $y\in K$.  A
diagonal extraction over a countable exhaustion by compact cylinders gives
\eqref{eq:pathwise-strong-trace-blowup} on all such cylinders; diagonalizing
once more over the prescribed countable family proves the final assertion of
Lemma~\ref{lem:pathwise-trace-to-blowup}.  See
\cite[Theorem~2]{Panov:2005lr} for the same extraction argument.
\end{proof}

%%%%%%%%%%%%%%%%%%%%%%%%%
%%%%%%%%%%%%%%%%%%%%%%%%%
\section{Pathwise dimension reduction on a degenerate interval}
\label{sec:pathwise-dimension-reduction}

Fix $\omega\in\Omega_*$ throughout this section, where $\Omega_*$ is the
full-probability event defined in \eqref{eq:common-pathwise-event}; no further
probability-null set is removed.  We use $\omega'\in\Omega$ only as the free
probability variable in the parameterized stochastic constructions that
must be completed before evaluation at the fixed $\omega$.

For an open interval \(I\subset(-L,L)\), we use the convention introduced
in Subsection~\ref{subsec:countable-reduction}: we say that
\eqref{eq:kinetic-with-trace} holds on \(I\) if it is valid for the
prescribed determining family localized to \(I\). The corresponding weak
trace is the restriction
\(
h_0^\omega\big|_{\R^d\times I}.
\)

\subsection{Linear reduction}

The next lemma converts a constant direction of the space--time flux into a
spatial direction with zero transformed transport velocity. To proceed, recall that
\[
a(\lambda):=f'(\lambda).
\]

\begin{lemma}[Linear reduction of a constant flux direction]
\label{lem:pathwise-linear-change}
Let $I\subset(-L,L)$ be an open interval. 
Suppose that
\(
(\xi_0,\xi)\in\R^{d+1}\setminus\{0\}
\)
and
\begin{equation}\label{eq:pathwise-directional-degeneracy}
	\xi_0+\xi\cdot a(\lambda)=0
	\qquad
	\text{for every }\lambda\in I.
\end{equation}
Then $\xi\neq0$. Moreover, there exist an invertible matrix
$A\in\R^{d\times d}$ and a vector $c\in\R^d$ such that, under the change
of variables
\begin{equation}\label{eq:pathwise-linear-change}
	w=Ax+ct,
\end{equation}
the transformed transport velocity
\[
\widetilde a(\lambda):=Aa(\lambda)+c
\]
satisfies
\[
\widetilde a_d(\lambda)=0
\qquad
\text{for every }\lambda\in I.
\] Moreover, the change of variables preserves Assumption~\ref{ass:coeff},
the pathwise kinetic identity on $I$, and the weak-trace property, with
the transformed coefficients, kinetic measure, kinetic function, and weak
trace defined in the proof.
\end{lemma}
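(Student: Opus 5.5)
The proof has two parts. The algebraic part shows that $\xi\neq0$ and constructs $(A,c)$. The analytic part transports every ingredient of the pathwise framework through the linear map $(t,x)\mapsto(t,Ax+ct)$.

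For the algebra: if $\xi=0$, then \eqref{eq:pathwise-directional-degeneracy} gives $\xi_0=0$, which contradicts $(\xi_0,\xi)\neq0$. Hence $\xi\neq0$. Complete $\xi/\abs{\xi}$ to an orthonormal basis and let $A$ be the orthogonal matrix whose last row is $\xi/\abs{\xi}$ and whose other rows span $\xi^\perp$. Set $c:=(\xi_0/\abs{\xi})e_d$. Then
\[
\widetilde a_d(\lambda)=\frac{\xi\cdot a(\lambda)+\xi_0}{\abs{\xi}}=0
\qquad\text{for every }\lambda\in I,
\]
and $A$ is invertible.

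For the transported objects, write $x=A^{-1}(w-ct)$ and define
\[
\widetilde u(t,w):=u\bigl(t,A^{-1}(w-ct)\bigr),\qquad
\widetilde b_\ell(t,w,\lambda):=b_\ell\bigl(t,A^{-1}(w-ct),\lambda\bigr),\qquad
\widetilde f(\lambda):=Af(\lambda)+c\lambda .
\]
The kinetic function is $\widetilde h=\1_{\{\lambda<\widetilde u\}}$. The new kinetic measure $\widetilde m$ is the push-forward of $m$ under $(t,x,\lambda)\mapsto(t,Ax+ct,\lambda)$, multiplied by $\abs{\det A}^{-1}$, and the new weak trace is $\widetilde h_0^\omega(w,\lambda):=h_0^\omega(A^{-1}w,\lambda)$.

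Checking Assumption~\ref{ass:coeff} is routine. Joint Borel measurability is preserved under composition with an affine map. The $C^1$ regularity in $(w,\lambda)$ for fixed $t$ is preserved, with $\nabla_w\widetilde b_\ell=A^{-T}\nabla_xb_\ell$. The local bound \eqref{eq:noise-local-bound} holds on $[0,T]\times K\times[-R,R]$ because the preimage set $\{A^{-1}(w-ct):t\in[0,T],\,w\in K\}$ is compact. Predictability of $\widetilde u$ follows because the map is deterministic and leaves $t$ unchanged. For $\widetilde m$, the conditions of Definition~\ref{def:kinetic-measure} (measurability, local expected mass, predictability of the cumulative process) follow by composing test functions with the inverse map. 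Compact sets map to compact sets, and $\widetilde\psi(t,x,\lambda)=\psi(t,Ax+ct,\lambda)$ lies in $C_c$ on the bounded time interval. The support condition in $\lambda$ is unchanged.

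For the kinetic identity, let $\widetilde\varphi$ be a test function in the new variables and set $\varphi(t,x,\lambda):=\widetilde\varphi(t,Ax+ct,\lambda)$. The chain rule gives
\[
\partial_t\varphi+a\cdot\nabla_x\varphi=\partial_t\widetilde\varphi+\bigl(Aa(\lambda)+c\bigr)\cdot\nabla_w\widetilde\varphi ,
\]
and changing variables $w=Ax+ct$ with Jacobian $\abs{\det A}$ in every term of \eqref{eq:kinetic-with-trace} yields the transformed identity. The trace term at $t=0$ involves $w=Ax$, which is exactly why $\widetilde h_0^\omega$ is defined with $A^{-1}w$. The It\^o correction transforms pointwise. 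For the stochastic term, the pairing $\int b_\ell\varphi\,dx$ equals $\abs{\det A}^{-1}\int\widetilde b_\ell\widetilde\varphi\,dw$ for every $t$ and $\omega$, so the It\^o integrals coincide as processes.

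The weak-trace property \eqref{eq:pathwise-weak-trace} for $\widetilde h$ and $\widetilde h_0^\omega$ needs one extra estimate. Write $\widetilde h(t,w)=h(t,A^{-1}(w-ct))$ and compare it with $h(t,A^{-1}w)$. Their pairings against a fixed $\varphi$ differ by at most the $L^1$ norm of $\varphi(\cdot+A^{-1}ct)-\varphi$, which tends to $0$ as $t\to0$ because $0\le h\le1$. The remaining pairing, against $h(t,A^{-1}w)$, converges by Lemma~\ref{lem:weak-trace} after the change of variables.

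The main obstacle is null sets. Everything above must hold on the already fixed $\omega\in\Omega_*$, with no new null set removed. The plan is to invoke the prescribed determining family. The pullbacks $\varphi=\widetilde\varphi\circ(\text{affine map})$ of the members of $\mathscr T_\alpha$, $\mathscr T_{\mathrm{kin}}$ were included in advance as "fixed linear pullbacks". For a general test function, one approximates in the $C^1$ norm on compact cylinders and passes to the limit in each term. The deterministic terms are continuous in $\varphi$. The martingale term is handled via \eqref{eq:pathwise-martingale-pairing} applied to the continuous representative $\mathscr M^\rho$: it is rewritten as $-\int\langle\partial_t\eta,\mathscr M^\rho\rangle\,dt$, which is continuous in $\eta$ for the fixed path. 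This tensor-by-tensor continuity extension, one kinetic factor $\rho\in\mathcal R$ at a time, is the delicate step, and it is exactly what the extension statement after \eqref{eq:countable-kinetic-event} supplies.
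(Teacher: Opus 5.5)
Your proposal is correct and follows the paper's proof essentially step by step. It uses the same contradiction to get $\xi\neq0$, the same pullback $\varphi(t,x,\lambda)=\psi(t,Ax+ct,\lambda)$ with the chain rule and the Jacobian $\abs{\det A}$, the same device of prescribed linear pullbacks in the determining family so that no new null set is removed, and the same add-and-subtract argument with translation continuity for the weak trace.

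There are two small slips. For a general invertible $A$, the transformed kinetic measure must be $\abs{\det A}$ times the push-forward of $m^\omega$ under $(t,x,\lambda)\mapsto(t,Ax+ct,\lambda)$, as in the paper, not $\abs{\det A}^{-1}$ times it. Also, the shift in your weak-trace estimate is by $ct$ in the $w$-variables, not $A^{-1}ct$. Both slips are harmless here: your $A$ is orthogonal, so $\abs{\det A}=1$, and any shift that tends to zero suffices.
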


\begin{proof}
The deterministic version of the reduction below is used in
\cite[Section~7, p.~904]{Panov:2005lr}; here we additionally verify its
compatibility with the stochastic kinetic formulation and the weak trace.

If $\xi=0$, then \eqref{eq:pathwise-directional-degeneracy} gives
$\xi_0=0$, contradicting $(\xi_0,\xi)\neq0$. Hence $\xi\neq0$.
Complete $\xi^\top$ to a basis of row vectors of $\R^d$, let
$A\in\R^{d\times d}$ be the corresponding invertible matrix with last
row $\xi^\top$, and set
\[
c:=\xi_0e_d.
\]
Since $a=f'$, the last component of
\[
\widetilde a(\lambda):=Aa(\lambda)+c
\]
is
\[
\widetilde a_d(\lambda)
=
\xi\cdot a(\lambda)+\xi_0
=
0
\qquad
\text{for every }\lambda\in I.
\]

Define
\begin{equation}\label{eq:transformed-solution}
\widetilde u(t,w)
:=
u\bigl(t,A^{-1}(w-ct)\bigr).
\end{equation}
The transformed flux may be taken as
\[
\widetilde f(r):=Af(r)+cr,
\]
so that
\[
\widetilde f'(r)=Af'(r)+c=Aa(r)+c=\widetilde a(r).
\]
For every $\ell\geq1$, define
\[
\widetilde b_\ell(t,w,\lambda)
:=
b_\ell\bigl(t,A^{-1}(w-ct),\lambda\bigr).
\]

We verify that the transformed coefficients satisfy
\eqref{eq:noise-local-bound}. Let $\widetilde K\Subset\R^d$ and set
\[
K
:=
\left\{
A^{-1}(w-ct):
w\in\widetilde K,\ t\in[0,T]
\right\}.
\]
Then $K\Subset\R^d$. Moreover,
\[
\partial_\lambda\widetilde b_\ell(t,w,\lambda)
=
\partial_\lambda b_\ell
\bigl(t,A^{-1}(w-ct),\lambda\bigr),
\]
and the chain rule gives
\[
\nabla_w\widetilde b_\ell(t,w,\lambda)
=
A^{-\top}
\nabla_x b_\ell
\bigl(t,A^{-1}(w-ct),\lambda\bigr).
\]
Consequently, with
\[
C_A:=\max\left\{1,\norm{A^{-\top}}^2\right\},
\]
we have, for every $R>0$,

\begin{align*}
&\sup_{(t,w,\lambda)\in[0,T]\times\widetilde K\times[-R,R]}
\sum_{\ell\geq1}
\left(
\abs{\widetilde b_\ell(t,w,\lambda)}^2
+
\abs{\partial_\lambda\widetilde b_\ell(t,w,\lambda)}^2
+
\abs{\nabla_w\widetilde b_\ell(t,w,\lambda)}^2
\right)
\\
&\qquad\leq
C_A
\sup_{(t,x,\lambda)\in[0,T]\times K\times[-R,R]}
\sum_{\ell\geq1}
\left(
\abs{b_\ell(t,x,\lambda)}^2
+
\abs{\partial_\lambda b_\ell(t,x,\lambda)}^2
+
\abs{\nabla_x b_\ell(t,x,\lambda)}^2
\right)
<\infty.
\end{align*}
Thus the transformed coefficients satisfy
\eqref{eq:noise-local-bound}.

Let
\[
\Theta(t,x,\lambda):=(t,Ax+ct,\lambda).
\]
Define the transformed kinetic measure by
\[
\widetilde m^\omega
:=
\abs{\det A}\,\Theta_\#m^\omega;
\]
equivalently,
\[
\int
\psi(t,w,\lambda)
\,d\widetilde m^\omega(t,w,\lambda)
=
\abs{\det A}
\int
\psi(t,Ax+ct,\lambda)
\,dm(\omega;t,x,\lambda)
\]
for every
\[
\psi
\in
C_c\bigl((0,T)\times\R^d\times\R\bigr).
\]
Since $\Theta$ is a homeomorphism preserving the time and kinetic
variables, $\widetilde m^\omega$ is again a nonnegative Radon measure
with the corresponding local finiteness and kinetic-support properties.

Define
\[
\widetilde h(t,w,\lambda)
:=
\1_{\{\lambda<\widetilde u(t,w)\}}
=
h\bigl(t,A^{-1}(w-ct),\lambda\bigr)
\]
and
\[
\widetilde G^2(t,w,\lambda)
:=
\sum_{\ell\geq1}
\abs{\widetilde b_\ell(t,w,\lambda)}^2.
\]
We also set
\begin{equation}\label{eq:transformed-weak-trace}
\widetilde h_0^\omega(w,\lambda)
:=
h_0^\omega(A^{-1}w,\lambda).
\end{equation}

To verify the transformed kinetic identity on \(I\), take a test function
\[
\psi\in C_c^1([0,T)\times\R^d\times I)
\]
from the prescribed determining family for the transformed data. By the
countable preparation in Subsection~\ref{subsec:countable-reduction}, its
fixed linear pullback
\[
\varphi(t,x,\lambda)
:=
\psi(t,Ax+ct,\lambda)
\]
belongs to the prescribed determining family for the original data.
Therefore, \eqref{eq:kinetic-with-trace} may be applied to \(\varphi\) on
the same fixed event.
 The chain rule gives
\begin{align*}
&\partial_t\varphi(t,x,\lambda)
+
a(\lambda)\cdot\nabla_x\varphi(t,x,\lambda)
\\
&\qquad=
\left(
\partial_t\psi
+
\bigl(Aa(\lambda)+c\bigr)\cdot\nabla_w\psi
\right)
(t,Ax+ct,\lambda)
\\
&\qquad=
\left(
\partial_t\psi
+
\widetilde a(\lambda)\cdot\nabla_w\psi
\right)
(t,Ax+ct,\lambda),
\end{align*}
while
\[
\partial_\lambda\varphi(t,x,\lambda)
=
\partial_\lambda\psi(t,Ax+ct,\lambda).
\]
Multiplying \eqref{eq:kinetic-with-trace} by
$\abs{\det A}$ and using
\[
dw=\abs{\det A}\,dx
\]
transforms all Lebesgue, stochastic, and boundary terms into the
corresponding terms involving
\[
(w,\widetilde u,\widetilde h,\widetilde a,
\widetilde b_\ell,\widetilde G,\widetilde h_0^\omega).
\]
The definition of $\widetilde m^\omega$ gives the corresponding
transformation of the kinetic-measure term. Therefore,
\eqref{eq:kinetic-with-trace} holds on $I$ for the transformed data.

It remains to verify that \eqref{eq:transformed-weak-trace} is indeed the
weak trace of $\widetilde h$. Let
\[
\psi\in C_c^1(\R^d\times\R).
\]
Changing variables and adding and subtracting the same term gives
\begin{align*}
&\int_{\R^d}\int_{\R}
\bigl(
\widetilde h(t,w,\lambda)
-
\widetilde h_0^\omega(w,\lambda)
\bigr)
\psi(w,\lambda)
\,d\lambda\,dw
\\
&\quad=
\abs{\det A}
\int_{\R^d}\int_{\R}
\bigl(
h(t,x,\lambda)-h_0^\omega(x,\lambda)
\bigr)
\psi(Ax,\lambda)
\,d\lambda\,dx
\\
&\qquad+
\abs{\det A}
\int_{\R^d}\int_{\R}
h(t,x,\lambda)
\bigl(
\psi(Ax+ct,\lambda)-\psi(Ax,\lambda)
\bigr)
\,d\lambda\,dx.
\end{align*}
The essential limit of the first term is zero by
\eqref{eq:pathwise-weak-trace}, applied to
$(x,\lambda)\mapsto\psi(Ax,\lambda)$. Since $0\leq h\leq1$, the
absolute value of the second term is bounded by
\[
\int_{\R^d}\int_{\R}
\abs{
\psi(w+ct,\lambda)-\psi(w,\lambda)
}
\,d\lambda\,dw,
\]
which tends to zero as $t\downarrow0$ by translation continuity in
$L^1(\R^d\times\R)$. This proves
\eqref{eq:transformed-weak-trace}.

Joint Borel measurability of the transformed coefficients and their
derivatives, as well as their $C^1$-regularity in $(w,\lambda)$ for each
fixed $t$, follow immediately from Assumption~\ref{ass:coeff} and the
affine change of variables.
\end{proof}
\subsection{Deterministic slicing of the kinetic measure}

We use the following deterministic form of Panov's measure-slicing lemma
\cite[Lemma~4.2]{Panov:2005lr}.  It concerns one fixed Radon measure and
requires no measurable dependence on an external parameter.

\begin{lemma}[Slicing a fixed Radon measure]
\label{lem:pathwise-measure-slicing}

Let $Q\subset\R^N$ be open, and let $\gamma$ be a fixed
nonnegative Radon measure on
$
Q\times\R_\zeta.
$
Here $\R_\zeta$ labels the transverse spatial coordinate {(denoted $w_q$ in the
application in the following subsection)}; 
$r$ is the integration variable and $\zeta$ its fixed value.
Let $\theta_j(r):=j\theta(jr)$, where
$$
\theta\in C_c^\infty((-1,1)),
\qquad
\theta\geq0,
\qquad
\theta(r)=\theta(-r),
\qquad
\int_{\R}\theta(r)\,dr=1.
$$
Then there is a set
$$
S_\gamma\subset\R,
\qquad
\mathcal L^1(\R\setminus S_\gamma)=0,
$$
such that, for every $\zeta\in S_\gamma$, there is a locally finite nonnegative
Radon measure $\gamma_\zeta$ on $Q$ satisfying
\begin{equation*}
	\lim_{j\to\infty}
	\int_{Q\times\R}
	\psi(q)\theta_j(r-\zeta)
	\,d\gamma(q,r)
	=
	\int_Q
	\psi(q)
	\,d\gamma_\zeta(q)
\end{equation*}
for every $\psi\in C_c(Q)$.

More generally, let $(E_n)$ be a countable family of open subsets of $Q$
such that $\gamma(E_n\times J)<\infty$ for every $n$ and every compact
interval $J\Subset\R$. The set $S_\gamma$ may be chosen so that
$\gamma_\zeta(E_n)<\infty$ for every $n$ and every $\zeta\in S_\gamma$. In
particular, finite mass on
local cylinders reaching a boundary such as $t=0$ is inherited by almost
every transverse section.
\end{lemma}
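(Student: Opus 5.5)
The plan is to disintegrate $\gamma$ with respect to the transverse projection $\pi(q,r)=r$. Write $\bar\gamma:=\pi_\#(\gamma|_{E\times\R})$ for a fixed relatively compact open $E\Subset Q$. This is a locally finite Radon measure on $\R_\zeta$. Decompose it by Lebesgue--Radon--Nikodym as $\bar\gamma=g\,d\zeta+\bar\gamma^s$, with $\bar\gamma^s\perp\mathcal L^1$.

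By the disintegration theorem there is a measurable family of probability measures $(\kappa_r)_{r}$ on $Q$ with $d\gamma(q,r)=d\kappa_r(q)\,d\bar\gamma(r)$ on $E\times\R$. Define the candidate section by $\gamma_\zeta:=g(\zeta)\kappa_\zeta$. Then
\[
\int\psi(q)\theta_j(r-\zeta)\,d\gamma
=\int\theta_j(r-\zeta)\Bigl(\int\psi\,d\kappa_r\Bigr)g(r)\,dr
+\int\theta_j(r-\zeta)\Bigl(\int\psi\,d\kappa_r\Bigr)d\bar\gamma^s(r).
\]

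The convergence will come from Lebesgue differentiation. Take $\zeta$ to be a Lebesgue point of $r\mapsto g(r)\int\psi\,d\kappa_r$, which is locally $L^1$ with norm at most $\norm{\psi}_\infty g$. Then the first term converges to $\int\psi\,d\gamma_\zeta$, because $\theta_j$ is a bounded, compactly supported approximate identity. For the second term, it suffices that $\bar\gamma^s\bigl((\zeta-1/j,\zeta+1/j)\bigr)\,j\to0$. This holds for $\mathcal L^1$-a.e.\ $\zeta$, since the symmetric derivative of a singular measure vanishes Lebesgue-almost everywhere. The bound $\theta_j\le j\norm{\theta}_\infty$ then makes the second term vanish. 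These exceptional sets depend on $\psi$, so I will restrict $\psi$ to a countable family $\{\psi_k\}$ dense in $C_c(E)$ in the sup norm. The limit for an arbitrary $\psi\in C_c(E)$ then follows by a $3\varepsilon$ argument. It uses the uniform bound
\[
\sup_j\int_{\operatorname{supp}\psi\times\R}\theta_j(r-\zeta)\,d\gamma\le\norm{\theta}_\infty\sup_j j\,\bar\gamma\bigl((\zeta-1/j,\zeta+1/j)\bigr)<\infty,
\]
which is finite at the same almost-every $\zeta$ by the maximal-function/differentiation theorem.

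To pass to all of $Q$, exhaust $Q$ by countably many $E_i\Subset Q$. On overlaps the sections $g_i\kappa^i_\zeta$ agree as measures for a.e.\ $\zeta$, by uniqueness of the limit tested on a dense family. I will take $S_\gamma$ to be the intersection of the good sets and define $\gamma_\zeta$ by consistency; it is locally finite because $g_i(\zeta)<\infty$. For the refinement with the family $(E_n)$, I apply the same construction with $E=E_n$. The hypothesis $\gamma(E_n\times J)<\infty$ makes $\bar\gamma_n$ locally finite. Hence its density $g_n(\zeta)$ is finite a.e., and we get $\gamma_\zeta(E_n)=g_n(\zeta)<\infty$ after identifying $\gamma_\zeta|_{E_n}=g_n\kappa^n_\zeta$. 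This identification holds for a.e.\ $\zeta$ by the uniqueness argument above, tested on $C_c(E_n)$; lower semicontinuity gives $\gamma_\zeta(E_n)\le\liminf$ if needed. Intersecting countably many full-measure sets preserves full measure.

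The main obstacle is the singular part together with sets $E_n$ that are not relatively compact in $Q$ (for example, cylinders reaching $t=0$). There the projected measure is finite only because of the hypothesis, and the approximation by compactly supported $\psi$ must be handled through monotone exhaustion of $E_n$. I expect the a.e.\ vanishing of $j\,\bar\gamma^s(\zeta-1/j,\zeta+1/j)$, which is Besicovitch/Lebesgue differentiation on $\R$, to carry this step.
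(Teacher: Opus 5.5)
Your argument is correct, and it takes a genuinely different route from the paper. The paper takes the first assertion from Panov's Lemma~4.2, applied on a relatively compact exhaustion of $Q$. It then proves the finite-mass refinement by an averaged argument: it approximates $\1_{E_n}$ from below by $\psi_k\in C_c(Q)$ and uses only Lebesgue measurability of the scalar maps $\zeta\mapsto\int\psi_k\,d\gamma_\zeta$, together with Fatou and Tonelli, to get $\int_{J_0}\gamma_\zeta(E_n)\,d\zeta\leq\gamma(E_n\times J_1)$. It deliberately never uses any measurable structure of $\zeta\mapsto\gamma_\zeta$. You instead re-prove the slicing lemma from standard measure theory, via disintegration along $\pi(q,r)=r$, Lebesgue decomposition of the projected measure, Lebesgue points for the absolutely continuous part, and a.e.\ vanishing of the symmetric derivative of the singular part. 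This identifies the section explicitly as $\gamma_\zeta=g(\zeta)\kappa_\zeta$ and gives the pointwise value $\gamma_\zeta(E_n)=g_n(\zeta)$; integrating over $J$ recovers the paper's bound, even with $J$ in place of the enlarged $J_1$. The cost is heavier machinery and some bookkeeping you should state. First, $\kappa_\zeta$ is only defined $\overline\gamma$-a.e.; but $g=0$ $\mathcal L^1$-a.e.\ on every $\overline\gamma$-null set, so $g(\zeta)\kappa_\zeta$ is well defined for $\mathcal L^1$-a.e.\ $\zeta$. Second, in the uniform bound for your $3\varepsilon$ step the integral should run over $E\times\R$, where $\psi-\psi_k$ is supported, not over $\supp\psi\times\R$; this is harmless because $\overline\gamma$ already projects $\gamma|_{E\times\R}$. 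Your lower-semicontinuity remark in fact settles the refinement on its own, without the identification step: $\gamma_\zeta(E_n)\leq\liminf_j\int_{E_n\times\R}\theta_j(r-\zeta)\,d\gamma\leq\norm{\theta}_\infty\limsup_j j\,\overline\gamma_n\bigl((\zeta-1/j,\zeta+1/j)\bigr)$. The right-hand side is finite for a.e.\ $\zeta$ by differentiation of the locally finite measure $\overline\gamma_n$, so this is the pointwise counterpart of the paper's integrated Fatou estimate.
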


\begin{proof}
The proof of \cite[Lemma~4.2]{Panov:2005lr}, applied on a countable
relatively compact exhaustion of $Q$, gives the first assertion:
{there exists a set $\widetilde S_\gamma \subset \R$ of full 
measure and compatible local measures $\gamma_\zeta$ defined 
for every $\zeta \in \widetilde S_\gamma$.}

We verify the finite-mass addition.  Fix $E_n$ and a compact interval
$J_0\Subset\R$, and choose a compact interval $J_1$ such that
$J_0\Subset\operatorname{int}J_1$.  Since $E_n$ is open, there are functions
$\psi_k\in C_c(Q)$ such that $0\leq\psi_k\leq1$ and
$\psi_k\uparrow\1_{E_n}$.  For each $k$, the function
\begin{equation*}
	\zeta
	\longmapsto
	\1_{{\widetilde S_\gamma}}(\zeta)
	\int_Q\psi_k(q)\,d\gamma_\zeta(q)
\end{equation*} {is Lebesgue measurable: indeed, on $\widetilde S_\gamma$ 
it is the pointwise limit of continuous functions of $\zeta$ 
(the mollified integrals in the statement), and it trivially 
vanishes on the null set $\R\setminus \widetilde S_\gamma$.}  
Fatou's lemma and Tonelli's theorem give
\begin{align*}
	\int_{J_0\cap {\widetilde S_\gamma}}
	\int_Q\psi_k(q)\,d\gamma_\zeta(q)\,d\zeta
	&\leq
	\liminf_{j\to\infty}
	\int_{Q\times\R}
	\psi_k(q)
	\left(
	\int_{J_0}\theta_j(r-\zeta)\,d\zeta
	\right)
	\,d\gamma(q,r)
	\\
	&\leq
	\gamma(E_n\times J_1).
\end{align*}
For the last inequality, use $\supp\theta_j\subset(-1/j,1/j)$: for all
sufficiently large $j$, the inner integral is bounded by $1$ and vanishes
unless $r\in J_1$.  Letting $k\to\infty$ yields
\begin{equation*}
	\int_{J_0\cap {\widetilde S_\gamma}}
	\gamma_\zeta(E_n)\,d\zeta
	\leq
	\gamma(E_n\times J_1)
	<
	\infty.
\end{equation*}
This implies that
\[
\gamma_\zeta(E_n)<\infty
\]
for almost every
\(
\zeta\in J_0\cap\widetilde S_\gamma.
\)

Let \((J_m)_{m\geq1}\) be a compact exhaustion of \(\R\), and, for
\(n,m\geq1\), define
\[
N_{n,m}
:=
\left\{
\zeta\in J_m\cap\widetilde S_\gamma:
\gamma_\zeta(E_n)=\infty
\right\}.
\]
Applying the preceding conclusion with \(J_0=J_m\), we obtain
\[
\mathcal L^1(N_{n,m})=0
\qquad
\text{for every }n,m\geq1.
\]
Define
\[
S_\gamma
:=
\widetilde S_\gamma
\setminus
\bigcup_{n=1}^{\infty}
\bigcup_{m=1}^{\infty}
N_{n,m}.
\]
Then
\[
\mathcal L^1(\R\setminus S_\gamma)=0.
\]
Moreover, if \(\zeta\in S_\gamma\) and \(n\geq1\), choose \(m\) such
that \(\zeta\in J_m\). Since
\(\zeta\in\widetilde S_\gamma\setminus N_{n,m}\), it follows that
\[
\gamma_\zeta(E_n)<\infty.
\]
Thus \(\gamma_\zeta(E_n)<\infty\) for every \(n\geq1\) and every
\(\zeta\in S_\gamma\). This construction uses only the measurability of
the scalar evaluations
\(\zeta\mapsto\gamma_\zeta(E_n)\), and does not require measurability of
the measure-valued map \(\zeta\mapsto\gamma_\zeta\).

\end{proof}

\subsection{The kinetic equation at a fixed transverse coordinate}

For a reduction datum $\alpha$ of spatial dimension 
$q=d_\alpha$, recall that $x\in\R^q$ is its active spatial variable.  
Under the next change of variables
$w=Ax+ct$, write $w=(w',w_q)\in\R^{q-1}\times\R$.  Fixing $w_q=\zeta$
makes $w'$ the active variable for $\beta$ and appends $\zeta$ to the
accumulated parameter, which becomes $(\sigma,\zeta)$; see
\eqref{eq:reduction-coordinate-recursion}.  Thus $w'$ occupies the spatial
argument generically denoted by $x$ for a reduction datum; we retain $w'$
here to record its origin from $w=(w',w_q)$.  With the already fixed
$(\omega,\sigma)$ suppressed, we call
$(t,w')\mapsto\widetilde u(t,w',\zeta)$ the section 
at $\zeta$.  

The next lemma states the exact lower-dimensional
identity satisfied by almost every such section.
It assumes only the nonrecursive input properties
\textup{(i)}--\textup{(v)} of
Definition~\ref{def:pathwise-kinetic-realization} and proves the one-step
reduction property subsequently recorded in item~\textup{(vi)}. Since the
reduction data and all associated stochastic constructions were fixed in
advance and every reduction lowers the spatial dimension, this is a
well-founded finite induction.

\begin{lemma}[Pathwise kinetic identity at almost every transverse coordinate]
\label{lem:pathwise-slice-equation}
Let $\alpha\in\mathfrak R$ be a reduction datum of spatial dimension
$q=d_\alpha\geq1$ (see Subsection~\ref{subsec:countable-reduction}), let
$(A,c,I)$ be one of its fixed reductions, and let $\beta$ be the associated
reduction datum with $d_\beta=q-1$.  Write
$\mathscr D_I:=\mathscr D_\beta$
and
$\mathscr T_I:=\mathscr T_\beta$. 
Fix $\omega\in\Omega_*$
(defined in \eqref{eq:common-pathwise-event})
and a recursively admissible accumulated parameter
$
\sigma\in\R^{r_\alpha}
$
(that is, $\sigma\in\mathcal A_\alpha(\omega)$ as defined in
\eqref{eq:recursive-admissible-parameters})
and assume that the reduced solution, kinetic measure, weak trace, and
martingale paths at $(\omega,\sigma)$ satisfy items \textup{(i)}--\textup{(v)}
of Definition~\ref{def:pathwise-kinetic-realization}.  In the joint
martingale construction, $\omega'\in\Omega$ remains a free probability
argument and $\zeta\in\R$ remains a free transverse parameter.  Define
\begin{align}
	\widetilde u
	\bigl(
	\omega',t,w',\sigma,\zeta
	\bigr)
	&:=
	u_\alpha
	\left(
	\omega',t,
	A^{-1}\bigl((w',\zeta)-ct\bigr),
	\sigma
	\right)
	\notag\\
	&=
	u_\beta
	\bigl(
	\omega',t,w',(\sigma,\zeta)
	\bigr).
	\label{eq:random-reduction-transform}
\end{align}
The last equality follows from
\eqref{eq:reduction-coordinate-recursion} and
\eqref{eq:reduction-random-sections}.
The coordinate change is deterministic and leaves time unchanged, so it uses
the ordinary chain rule rather than It\^o's formula and does not transform
$W_\ell$; hence the coefficients below have no $\omega'$-dependence:
\begin{align}
	\widetilde b_{\ell,\alpha}(t,w,\sigma,\lambda)
	&:=
	b_{\ell,\alpha}
	\bigl(
	t,A^{-1}(w-ct),\sigma,\lambda
	\bigr),
	\notag\\
	\widetilde a(\lambda)
	&:=
	A f_\alpha'(\lambda)+c.
	\label{eq:transformed-reduction-coefficients}
\end{align}
By \eqref{eq:reduction-coordinate-recursion},
\begin{equation}\label{eq:reduced-noise-identification}
	\widetilde b_{\ell,\alpha}
	\bigl(
	t,(w',\zeta),\sigma,\lambda
	\bigr)
	=
	b_{\ell,\beta}
	\bigl(
	t,w',(\sigma,\zeta),\lambda
	\bigr).
\end{equation}
Let $\widetilde m^{\omega,\sigma}$ be the transformed current measure defined
by
\begin{align}
	\int
	\psi(t,w,\lambda)
	\,d\widetilde m^{\omega,\sigma}(t,w,\lambda)
	:=
	\abs{\det A}
	\int
	\psi(t,Ax+ct,\lambda)
	\,dm_\alpha^{\omega,\sigma}(t,x,\lambda)
	\label{eq:transformed-reduction-measure}
\end{align}
for every
$
\psi\in C_c((0,T)\times\R^q\times I).
$
These are the reduced versions of
\eqref{eq:transformed-solution} and the transformed data in
Lemma~\ref{lem:pathwise-linear-change}.  In deterministic quantities below,
the fixed pair $(\omega,\sigma)$ is suppressed; we write
$
\widetilde b_\ell(t,w,\lambda)
:=
\widetilde b_{\ell,\alpha}(t,w,\sigma,\lambda)
$
and
$
\widetilde m^\omega
:=
\widetilde m^{\omega,\sigma}.
$
Thus
$\widetilde h(t,w',\zeta,\lambda)
:=\1_{\{\lambda<\widetilde u(\omega,t,w',\sigma,\zeta)\}}$.
Lemma~\ref{lem:pathwise-linear-change}, applied with $d=q$, shows that these
transformed data satisfy \eqref{eq:kinetic-with-trace} on the open interval
$I\subset(-L,L)$, with the substitutions specified in that lemma.  By the
choice of the reduction, $\widetilde a_q(\lambda)=0$ for every $\lambda\in I$.
Let
$
\widetilde a_\parallel
:=
(\widetilde a_1,\ldots,\widetilde a_{q-1}).
$

Here
$
\mathscr D_I\subset C_c^1(\R^{q-1}\times I)
$
is $L^1_{\loc}$-determining
(i.e., for every relatively compact open cylinder
$
Q\Subset\R^{q-1}\times I,
$
the rational span of
$
\mathscr D_I\cap C_c^1(Q)
$
is dense in $L^1(Q)$) and
$
\mathscr T_I
$
contains the rational linear span of
$
\{\theta(t)\psi(w',\lambda):
\theta\in\Theta,\ \psi\in\mathscr D_I\}.
$
Then there is a set
$$
S_{\omega,\sigma}\subset\R,
\qquad
\mathcal L^1(\R\setminus S_{\omega,\sigma})=0,
$$
such that the following assertions hold for every
$\zeta\in S_{\omega,\sigma}$: Define
\begin{equation*}
	\widetilde u_\zeta(t,w')
	:=
	\widetilde u(\omega,t,w',\sigma,\zeta),
	\qquad
	\widetilde h_\zeta(t,w',\lambda)
	:=
	\1_{\{\lambda<\widetilde u_\zeta(t,w')\}},
\end{equation*}
\begin{equation*}
	\widetilde b_{\ell,\zeta}(t,w',\lambda)
	:=
	\widetilde b_\ell(t,w',\zeta,\lambda),
	\qquad
	\widetilde G_\zeta^2(t,w',\lambda)
	:=
	\sum_{\ell\geq1}
	\abs{\widetilde b_{\ell,\zeta}(t,w',\lambda)}^2.
\end{equation*}
Let $\theta_j(r)=j\theta(jr)$ be the nonnegative, even approximate identity
from Lemma~\ref{lem:pathwise-measure-slicing}; in particular,
$\int_{\R}\theta_j(r)\,dr=1$ and
$\supp\theta_j\subset(-1/j,1/j)$.
There is a locally finite nonnegative Radon measure
$\widetilde m_\zeta^{\omega,\sigma}$ on
$(0,T)\times\R^{q-1}\times I$. Since $(\omega,\sigma)$ is fixed, we
abbreviate this measure by $\widetilde m_\zeta^\omega$. Thus, for every
$\psi\in C_c((0,T)\times\R^{q-1}\times I)$,
\begin{align}
	&\int
	\psi(t,w',\lambda)
	\,d\widetilde m_\zeta^\omega(t,w',\lambda)
	\notag\\
	&\qquad=
	\lim_{j\to\infty}
		\int
		\psi(t,w',\lambda)
		\theta_j(w_q-\zeta)
		\,d\widetilde m^\omega(t,w',w_q,\lambda).
	\label{eq:def-sliced-kinetic-measure}
\end{align}
Thus the transverse mollifications of $\widetilde m^\omega$ converge vaguely
to $\widetilde m_\zeta^\omega$ at $w_q=\zeta$.  This is a
Lebesgue-density slice in the $w_q$-direction, not the set-theoretic
restriction of $\widetilde m^\omega$ to $\{w_q=\zeta\}$; no measurable
dependence of $\widetilde m_\zeta^\omega$ on $\zeta$ is asserted.
For every $R>0$ and $I'\Subset I$,
\begin{equation}\label{eq:sliced-measure-local-mass}
	\widetilde m_\zeta^\omega
	\bigl(
	(0,T)\times B_R^{q-1}\times I'
	\bigr)
	<\infty.
\end{equation}
The measure vanishes outside
$
(0,T)\times\R^{q-1}\times[-M,M].
$

For $\phi\in\mathscr T_I$, let
$
Z_\phi(\omega',t,\sigma,\zeta)
$
denote the jointly measurable representative of the parameterized It\^o
integral below.  This representative is among those selected in the
construction of $\Omega_{\mathrm{mar}}$ in
\eqref{eq:martingale-common-event}.  Here $\omega'$ is its free probability
argument; the fixed realization $\omega\in\Omega_*$ is inserted only after
the representative has been selected:
\begin{align}
	&Z_\phi(\omega',t,\sigma,\zeta)
	\notag\\
	&\quad
	:=\sum_{\ell\geq1}\int_0^t
	\Biggl(
	\int_{\R^{q-1}}
	\widetilde b_{\ell,\alpha}
	\bigl(
	r,(w',\zeta),\sigma,
	\widetilde u(\omega',r,w',\sigma,\zeta)
	\bigr)
	\phi
	\bigl(
	r,w',
	\widetilde u(\omega',r,w',\sigma,\zeta)
	\bigr)
	\,dw'
	\Biggr)
	dW_\ell(r)
	\label{eq:sliced-scalar-martingale}
\end{align}
By \eqref{eq:random-reduction-transform} and
\eqref{eq:reduced-noise-identification}, the spatial integrand in
\eqref{eq:sliced-scalar-martingale} is
$
\mathcal I_{\ell,\beta,\phi}
(\omega',r,(\sigma,\zeta))
$
from \eqref{eq:reduction-scalar-integrand}.  The representative is jointly
measurable in $(\omega',t,\sigma,\zeta)$, and the equality in
\eqref{eq:sliced-scalar-martingale} holds for
$(\Prob\otimes\mathcal L^{r_\alpha+1})$-almost every
$(\omega',\sigma,\zeta)$.  For
$\zeta\in S_{\omega,\sigma}$, set
$Z_{\phi,\zeta}^\omega(t):=Z_\phi(\omega,t,\sigma,\zeta)$;
the fixed $\sigma$ is suppressed on the left-hand side.
The path $Z_{\phi,\zeta}^\omega$ is continuous and starts from zero, and
\begin{align}
	&\int_0^T
	\int_{\R^{q-1}}
	\int_I
	\widetilde h_\zeta
	\bigl(
	\partial_t\phi
	+
	\widetilde a_\parallel(\lambda)\cdot\nabla_{w'}\phi
	\bigr)
	\,d\lambda\,dw'\,dt
	\notag\\
	&\quad=
	-Z_{\phi,\zeta}^\omega(T)
	-
	\frac12
	\int_0^T
	\int_{\R^{q-1}}
	\widetilde G_\zeta^2
	\bigl(
	t,w',\widetilde u_\zeta(t,w')
	\bigr)
	\partial_\lambda\phi
	\bigl(
	t,w',\widetilde u_\zeta(t,w')
	\bigr)
	\,dw'\,dt
	\notag\\
	&\qquad+
	\int
	\partial_\lambda\phi(t,w',\lambda)
	\,d\widetilde m_\zeta^\omega(t,w',\lambda).
	\label{eq:pathwise-sliced-kinetic-identity}
\end{align}

For every $\psi\in\mathscr D_I$, let
$
M_\psi(\omega',t,\sigma,\zeta)
$
be the jointly measurable version of the scalar It\^o integral in
\eqref{eq:reduction-scalar-martingales} for the reduction datum $\beta$, the
accumulated parameter $(\sigma,\zeta)$, and the constant-in-time extension
$\widehat\psi(t,w',\lambda):=\psi(w',\lambda)$ introduced before
\eqref{eq:reduction-scalar-integrand}.  Explicitly,
\begin{equation}
	M_\psi(\omega',t,\sigma,\zeta)
	:=
	\sum_{\ell\geq1}
	\int_0^t
	\mathcal I_{\ell,\beta,\widehat\psi}
	\bigl(
	\omega',r,(\sigma,\zeta)
	\bigr)
	dW_\ell(r)
	\label{eq:sliced-time-martingale}
\end{equation}
For $\zeta\in S_{\omega,\sigma}$, the path 
$M_{\psi,\zeta}^\omega(t):=M_\psi(\omega,t,\sigma,\zeta)$
is continuous and starts from zero; again, $\sigma$ is fixed 
and suppressed on the left-hand side.  Finally, for every
$\rho\in\mathcal R$ with $\supp\rho\Subset I$ 
and every compact $K'\Subset\R^{q-1}$, the 
$\zeta$-section of \eqref{eq:sliced-spatial-martingale} belongs to 
$C([0,T];L^2(K'))$ and starts from zero.  
The scalar and $L^2(K')$-valued versions obtained
from different localization intervals agree on their overlaps.  Moreover,
every $\zeta\in S_{\omega,\sigma}$ satisfies
$(\sigma,\zeta)\in\mathcal A_\beta(\omega)$; in particular, the section at
$(\sigma,\zeta)$ retains the jointly constructed 
processes for every subsequent reduction from $\beta$ in
\eqref{eq:reduction-spatial-martingale}--\eqref{eq:reduction-scalar-martingales}.
\end{lemma}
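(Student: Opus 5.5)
The plan is to derive \eqref{eq:pathwise-sliced-kinetic-identity} by testing the transformed identity with transversally mollified test functions and letting the mollification collapse onto $w_q=\zeta$. Lemma~\ref{lem:pathwise-linear-change} shows that the transformed data satisfy \eqref{eq:kinetic-with-trace} on $I$. Since $\widetilde a_q=0$ on $I$, the transport operator applied to any function of the form $\phi(t,w',\lambda)\theta_j(w_q-\zeta)$ has no $\partial_{w_q}$ contribution. Concretely, I will test with
\[
\varphi_j(t,w,\lambda):=\phi(t,w',\lambda)\,\theta_j(w_q-\zeta)\,\chi(w_q),
\qquad \phi\in\mathscr T_I,
\]
where $\chi$ is a fixed rational cutoff equal to one near $\zeta$; this is a linear pullback of the prescribed determining family. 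Every term then becomes a $\theta_j$-average in $\zeta$ of a $(q-1)$-dimensional quantity. For the deterministic terms (the transport term and the It\^o correction), the functions
\[
\zeta\mapsto\int\!\!\int\!\!\int \widetilde h(t,w',\zeta,\lambda)\bigl(\partial_t\phi+\widetilde a_\parallel\cdot\nabla_{w'}\phi\bigr)\,d\lambda\,dw'\,dt
\]
and the analogous $G^2$ integrand are locally integrable. Lebesgue's differentiation theorem, applied at Lebesgue points, gives convergence of their mollifications to the value at $\zeta$. Because $\mathscr T_I$ is countable, the union of the exceptional sets is null.

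For the kinetic-measure term I will apply Lemma~\ref{lem:pathwise-measure-slicing} to $\gamma=\widetilde m^\omega$ on $Q=(0,T)\times\R^{q-1}\times I$. The countable family $E_n$ is taken to be $(0,T)\times B_R^{q-1}\times I'$ with rational $R$ and rational $I'\Subset I$. This produces $S_\gamma$, the measures $\widetilde m^\omega_\zeta$ in \eqref{eq:def-sliced-kinetic-measure}, and the local mass bound \eqref{eq:sliced-measure-local-mass}; finite mass up to $t=0$ is inherited from $\Omega_m$. The support property in $[-M,M]$ passes to the slice because only $\psi$ supported off $[-M,M]$ in $\lambda$ need to be tested. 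Since $\partial_\lambda\varphi_j=\partial_\lambda\phi\,\theta_j$, this term converges to $\int\partial_\lambda\phi\,d\widetilde m^\omega_\zeta$.

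The stochastic term is the main obstacle, because it is defined only almost surely for each fixed test function, whereas $\zeta$ ranges over a continuum. Before fixing $\omega$, I will prove a parameterized stochastic Fubini identity in $L^2(\Omega;L^2(J))$:
\[
\sum_\ell\int_0^t\!\!\int \widetilde b_\ell\,\phi\,\theta_j(w_q-\zeta)\,dw\,dW_\ell
=\int\theta_j(\zeta'-\zeta)\,Z_\phi(t,\sigma,\zeta')\,d\zeta'.
\]
This is the identity \eqref{eq:mollified-stochastic-fubini}. Its proof goes through the It\^o isometry for the $L^2(J_\alpha\times J)$-valued integrand and the BDG bound \eqref{eq:stochastic-fubini-BDG}, which also yields a jointly measurable version of $Z_\phi$ with continuous $\zeta$-sections almost everywhere. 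The same construction gives $M_\psi$ and the spatial processes \eqref{eq:sliced-spatial-martingale}. All of these statements are included in $\Omega_{\mathrm{Fub}}$, $\Omega_{\mathrm{sl}}$ and the definition of $\mathcal C_\alpha$. For the fixed $\omega$ and for $\sigma\in\mathcal A_\alpha(\omega)$, the fixed-$\sigma$ form of the identity then holds for almost every $\zeta$. Since $Z_\phi(\omega,T,\sigma,\cdot)\in L^2(J)$, its mollifications converge at Lebesgue points to $Z^\omega_{\phi,\zeta}(T)$.

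Finally, I will define $S_{\omega,\sigma}$ as the intersection of $S_\gamma$, the Lebesgue-point sets for the countable family $\mathscr T_I\cup\mathscr D_I$ and $\rho\in\mathcal R$, the sets of $\zeta$ on which the continuity and restriction properties of (b)--(c) hold, and the set $\{\zeta:(\sigma,\zeta)\in\mathcal A_\beta(\omega)\}$. The last set has full measure by \eqref{eq:recursive-admissible-parameters}, and the whole intersection is still of full measure. Passing $j\to\infty$ then yields \eqref{eq:pathwise-sliced-kinetic-identity} for every $\zeta\in S_{\omega,\sigma}$ and $\phi\in\mathscr T_I$, with all the stated path properties holding simultaneously.
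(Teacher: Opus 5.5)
Your proposal is correct and follows essentially the paper's own route. You test with $\phi\,\theta_j(w_q-\zeta)$ and use $\widetilde a_q=0$ on $I$; you apply Lebesgue differentiation to the transport and It\^o-correction terms and Lemma~\ref{lem:pathwise-measure-slicing} to the kinetic measure. You use the fixed-$\sigma$ form of \eqref{eq:mollified-stochastic-fubini}, available because $\mathcal A_\alpha(\omega)\subset\mathcal C_\alpha(\omega)$, evaluated at Lebesgue points of $Z_\phi(\omega,T,\sigma,\cdot)$, and you build $S_{\omega,\sigma}$ as the same countable intersection including $\{\zeta:(\sigma,\zeta)\in\mathcal A_\beta(\omega)\}$. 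The extra cutoff $\chi(w_q)$ is harmless but unnecessary, since $\theta_j(\cdot-\zeta)$ is already compactly supported.
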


\begin{proof}
For $\phi\in\mathscr T_I$, use
$\phi(t,w',\lambda)\theta_j(w_q-\zeta)$ as a test function in
\eqref{eq:kinetic-with-trace} for the transformed data specified in
Lemma~\ref{lem:pathwise-linear-change}. Since
$\widetilde a_q=0$ on $I$, the transport term contains no derivative of
$\theta_j(w_q-\zeta)$.  The Lebesgue differentiation theorem gives, for almost
every $\zeta\in\R$, the limits of the time-derivative, tangential transport, and
It\^o-correction terms appearing in
\eqref{eq:pathwise-sliced-kinetic-identity}.

Apply Lemma~\ref{lem:pathwise-measure-slicing} to the fixed measure
$\widetilde m^\omega$, with
$Q=(0,T)\times\R^{q-1}\times I$ and $\zeta=w_q$. Using rational radii and
relatively compact open intervals
$I_k\Subset I$, choose a countable exhaustion by open cylinders of the form
$(0,T)\times B_R\times I_k$.
Lemma~\ref{lem:pathwise-measure-slicing} gives
\eqref{eq:def-sliced-kinetic-measure} and
\eqref{eq:sliced-measure-local-mass} outside one Lebesgue-null set of
transverse coordinates; arbitrary $R$ and $I'\Subset I$ follow by inclusion.

We next show that, on a localization
$J_\alpha\times J\Subset\R^{r_\beta}$, the process
$Z_\phi(\omega',t,\sigma,\zeta)$ in
\eqref{eq:sliced-scalar-martingale} is, for almost every
$(\omega',\sigma,\zeta)$, the $(\sigma,\zeta)$-section of
$Z_{\beta,\phi,J_\alpha\times J}(t)$ from
\eqref{eq:reduction-scalar-martingales}.  Here $\beta$ is the one-step
reduction in the statement, whose accumulated parameter is
$(\sigma,\zeta)$.  At this stage, $\omega'$ and $\sigma$ remain free
variables; the fixed realization $\omega$ is inserted only after the jointly
measurable representative has been selected.  Fix
$\phi\in\mathscr T_I$, a compact box
$J_\alpha\Subset\R^{r_\alpha}$, and a compact interval $J\Subset\R$, and set
\begin{equation}\label{eq:sliced-integrand}
	F_{\ell,\phi}(\omega',t,\sigma,\zeta)
	:=
	\int_{\R^{q-1}}
	\widetilde b_{\ell,\alpha}
	\bigl(
	t,(w',\zeta),\sigma,
	\widetilde u(\omega',t,w',\sigma,\zeta)
	\bigr)
	\phi
	\bigl(
	t,w',
	\widetilde u(\omega',t,w',\sigma,\zeta)
	\bigr)
	\,dw'.
\end{equation}
The predictability statement following
\eqref{eq:reduction-random-sections} shows that
$F_{\ell,\phi}$ is
$\mathcal P\otimes\cB(\R^{r_\alpha}\times\R)$-measurable.  Thus the
parameter variables may be retained throughout the It\^o construction.
The bound $\abs{\widetilde u}\leq M$, the zeroth-order coefficient estimate
in \eqref{eq:noise-local-bound}, transferred to the reduced coefficients by
\eqref{eq:reduced-noise-identification}, and Cauchy--Schwarz in $w'$ give
\begin{equation}\label{eq:sliced-integrand-bound}
	\sum_{\ell\geq1}
	\E
	\int_0^T
	\int_{J_\alpha}
	\int_J
	\abs{F_{\ell,\phi}(t,\sigma,\zeta)}^2
	\,d\zeta\,d\sigma
	\,dt
	<\infty.
\end{equation}
The standard parameterized It\^o-integration theorem
\cite[Theorem~IV.65]{Protter:2005aa}, applied here under the stronger
$L^2$ bound \eqref{eq:sliced-integrand-bound}, can be proved directly by
approximation with jointly predictable simple integrands.  It gives a
jointly measurable function
$z_{\phi,J_\alpha,J}$ such that, for
$(\Prob\otimes\mathcal L^{r_\alpha+1})$-almost 
every $(\omega',\sigma,\zeta)$,
\begin{equation}\label{eq:scalar-stochastic-fubini}
	z_{\phi,J_\alpha,J}(\omega',t,\sigma,\zeta)
	=
	\sum_{\ell\geq1}
	\int_0^t
	F_{\ell,\phi}(\omega',r,\sigma,\zeta)
	\,dW_\ell(r)
\end{equation}
for every $t\in[0,T]$, with a continuous scalar path.  Scalar
Burkholder--Davis--Gundy and Tonelli give
\begin{align}
	\E
	\int_{J_\alpha}
	\int_J
	\sup_{0\leq t\leq T}
	\abs{
	z_{\phi,J_\alpha,J}(t,\sigma,\zeta)
	}^2
	\,d\zeta\,d\sigma
	&\leq
	4
	\sum_{\ell\geq1}
	\E
	\int_0^T
	\int_{J_\alpha}
	\int_J
	\abs{F_{\ell,\phi}(t,\sigma,\zeta)}^2
	\,d\zeta\,d\sigma\,dt.
	\label{eq:stochastic-fubini-BDG}
\end{align}

The process in \eqref{eq:scalar-stochastic-fubini} is a jointly measurable
family of real-valued martingales, with one continuous path for almost every
fixed $(\sigma,\zeta)$.  To integrate these sections in the parameter
variables, we also regard the same family
$(F_{\ell,\phi})_{\ell\geq1}$ as a single
$L^2(J_\alpha\times J)$-valued integrand.  Accordingly,
\eqref{eq:sliced-integrand-bound} defines the
$L^2(J_\alpha\times J)$-valued It\^o integral
\begin{equation}\label{eq:sliced-L2-martingale}
\mathcal Z_{\phi,J_\alpha,J}(t)
:= \sum_{\ell\geq1}
\int_0^t
F_{\ell,\phi}(r,\cdot)
\,dW_\ell(r)
\end{equation}
as a continuous $L^2(J_\alpha\times J)$-valued martingale.  For a countable
dense set of
$
g\in L^2(J_\alpha\times J),
$
stochastic Fubini gives, at every rational time,
\begin{align}
	\int_{J_\alpha}
	\int_J
	g(\sigma,\zeta)
	z_{\phi,J_\alpha,J}(t,\sigma,\zeta)
	\,d\zeta\,d\sigma
	&=
	\sum_{\ell\geq1}
	\int_0^t
	\left(
	\int_{J_\alpha}
	\int_J
	g(\sigma,\zeta)
	F_{\ell,\phi}(r,\sigma,\zeta)
	\,d\zeta\,d\sigma
	\right)
	dW_\ell(r)
	\notag\\
	&=
	\left\langle
	\mathcal Z_{\phi,J_\alpha,J}(t),g
	\right\rangle_{L^2(J_\alpha\times J)}.
	\label{eq:stochastic-fubini-identification}
\end{align}
The parameterized construction makes
$z_{\phi,J_\alpha,J}$ jointly measurable in
$(\omega',t,\sigma,\zeta)$.  By
\eqref{eq:stochastic-fubini-BDG}, outside one $\Prob$-null set,
$$
\int_{J_\alpha}\int_J
\sup_{0\leq t\leq T}
\abs{z_{\phi,J_\alpha,J}(t,\sigma,\zeta)}^2
\,d\zeta\,d\sigma
<\infty.
$$
Together with the scalar path continuity in
\eqref{eq:scalar-stochastic-fubini}, dominated convergence shows that
$t\mapsto z_{\phi,J_\alpha,J}(t,\cdot,\cdot)$ belongs to
$C([0,T];L^2(J_\alpha\times J))$.  Consequently,
\eqref{eq:stochastic-fubini-identification}, first at rational times, and
density in the dual variable identify
$z_{\phi,J_\alpha,J}(t,\cdot,\cdot)$ with the continuous martingale
$\mathcal Z_{\phi,J_\alpha,J}(t)$ from
\eqref{eq:sliced-L2-martingale}.  Continuity extends the identity to every
$t\in[0,T]$, so, outside one $\Prob$-null set,
\begin{equation*}
	z_{\phi,J_\alpha,J}(t,\cdot,\cdot)
	=
	\mathcal Z_{\phi,J_\alpha,J}(t)
	\quad\text{in }L^2(J_\alpha\times J)
\end{equation*}
for every $t\in[0,T]$.
Tonelli applied to \eqref{eq:stochastic-fubini-BDG} gives, for
$(\Prob\otimes\mathcal L^{r_\alpha})$-almost every $(\omega',\sigma)$,
\begin{equation}\label{eq:fixed-parameter-stochastic-fubini-bound}
	\int_J
	\sup_{0\leq t\leq T}
	\abs{z_{\phi,J_\alpha,J}(\omega',t,\sigma,\zeta)}^2
	\,d\zeta
	<\infty,
	\qquad
	z_{\phi,J_\alpha,J}(\omega',\cdot,\sigma,\cdot)
	\in C([0,T];L^2(J)).
\end{equation}
Indeed, the first assertion follows from Tonelli; scalar path continuity in
\eqref{eq:scalar-stochastic-fubini} and dominated convergence then give the
second.  Repeating the argument with $\psi\in\mathscr D_I$ gives the same
fixed-$\sigma$ conclusion for the processes in
\eqref{eq:sliced-time-martingale} defined using the constant-in-time
extensions $\widehat\psi$.
Consequently, \eqref{eq:scalar-stochastic-fubini} supplies the scalar It\^o
integral for almost every $(\sigma,\zeta)$, while
\eqref{eq:stochastic-fubini-identification} states that pairing these scalar
integrals with $g\in L^2(J_\alpha\times J)$ is the same as first pairing
$F_{\ell,\phi}$ with $g$ in $(\sigma,\zeta)$ and then taking the It\^o
integral.  Applying this identity to the bounded convolution operator
$v(\sigma,\cdot)\mapsto\theta_j*v(\sigma,\cdot)$ from
$L^2(J_\alpha\times J)$ to $L^2(J_\alpha\times J_0)$, where $J_0\Subset J$,
moves transverse mollification through the It\^o integral and gives
\eqref{eq:mollified-stochastic-fubini}.  This identifies the martingale term
produced by the test function
$\phi(t,w',\lambda)\theta_j(w_q-\zeta)$ with the corresponding mollification
of the scalar sections in \eqref{eq:scalar-stochastic-fubini}.

For rational compact boxes
$
J_{\alpha,1},J_{\alpha,2}
$
and intervals $J_1,J_2$, uniqueness of the scalar It\^o integrals in
\eqref{eq:scalar-stochastic-fubini} shows that the representatives agree
almost everywhere on
$
(J_{\alpha,1}\cap J_{\alpha,2})\times(J_1\cap J_2),
$
up to indistinguishability in time.  A countable intersection fixes all
restriction identities simultaneously.  Ordinary Fubini gives the
corresponding fixed-$\sigma$ restriction identities for
$(\Prob\otimes\mathcal L^{r_\alpha})$-almost every $(\omega',\sigma)$;
these identities are included in clause~\textup{(b)} defining
$\mathcal C_\alpha(\omega)$.  The representatives therefore define the
single function
$
Z_\phi(\omega',t,\sigma,\zeta)
$
used in \eqref{eq:sliced-scalar-martingale}.

If $J_0\Subset J$, then, for large $j$, stochastic Fubini also yields in
$
L^2(\Omega;L^2(J_\alpha\times J_0))
$
\begin{align}
	&\sum_{\ell\geq1}
	\int_0^T
	\left(
	\int_J
	\theta_j(\widehat\zeta-\zeta)
	F_{\ell,\phi}(t,\sigma,\widehat\zeta)
	\,d\widehat\zeta
	\right)
	dW_\ell(t)
	\notag\\
	&\qquad=
	\int_J
	\theta_j(\widehat\zeta-\zeta)
	z_{\phi,J_\alpha,J}(T,\sigma,\widehat\zeta)
	\,d\widehat\zeta.
	\label{eq:mollified-stochastic-fubini}
\end{align}
The parameterized construction supplies jointly measurable representatives
of the two sides of \eqref{eq:mollified-stochastic-fubini} on
$\Omega\times J_\alpha\times J_0$.  Ordinary Fubini therefore shows that, for
$(\Prob\otimes\mathcal L^{r_\alpha})$-almost every $(\omega',\sigma)$, its
fixed-$\sigma$ form holds in $L^2(J_0)$.  Taking the countable intersection
over the prescribed test functions, boxes, intervals, and mollifier indices
gives precisely clause~\textup{(b)} in the definition of
$\mathcal C_\alpha(\omega)$.
Repeating \eqref{eq:scalar-stochastic-fubini} and
\eqref{eq:stochastic-fubini-identification} with
$\psi(w',\lambda)\in\mathscr D_I$ in place of
$\phi(t,w',\lambda)$ constructs the jointly measurable processes whose
fixed-realization sections are
\eqref{eq:sliced-time-martingale}.

The reduction data, test functions, rational boxes, rational intervals, and mollifier
indices form countable families,
so
\eqref{eq:scalar-stochastic-fubini} and
\eqref{eq:mollified-stochastic-fubini} are included in
$\Omega_{\mathrm{mar}}$ from
\eqref{eq:martingale-common-event}.  If
$\omega\in\Omega_*$ and $\sigma\in\mathcal A_\alpha(\omega)$, then
$\sigma\in\mathcal C_\alpha(\omega)$ by
\eqref{eq:recursive-admissible-parameters}.  Clause~\textup{(b)} and
\eqref{eq:fixed-parameter-stochastic-fubini-bound} therefore show that
$z_{\phi,J_\alpha,J}(\omega,T,\sigma,\cdot)$ 
belongs to $L^2(J)$ and that the fixed-$\sigma$ identity
\eqref{eq:mollified-stochastic-fubini} holds in $L^2(J_0)$.  Outside the union of
the corresponding null sets in $J_0$, one for each prescribed mollifier
index, the equality holds pointwise for every $j$.  If such a point is also
a Lebesgue point of
$z_{\phi,J_\alpha,J}(\omega,T,\sigma,\cdot)$ and satisfies the fixed-$\sigma$
restriction identities, the right-hand side converges along the full
sequence $j\to\infty$ to
$z_{\phi,J_\alpha,J}(\omega,T,\sigma,\zeta)
=Z_\phi(\omega,T,\sigma,\zeta)$; the left-hand side has the same limit.

For the $L^2_{\loc}(\R^{q-1})$-valued martingale processes used by the
lower-dimensional blow-up argument, define, for
$\rho\in\mathcal R$ with $\supp\rho\Subset I$, $J\Subset\R$, and
$K'\Subset\R^{q-1}$,
\begin{equation*}
	H_{\ell,\rho}(\omega',t,w',\sigma,\zeta)
	:=
	\rho
	\bigl(
	\widetilde u(\omega',t,w',\sigma,\zeta)
	\bigr)
	\widetilde b_{\ell,\alpha}
	\bigl(
	t,(w',\zeta),\sigma,
	\widetilde u(\omega',t,w',\sigma,\zeta)
	\bigr)
\end{equation*}
and
\begin{align}
	\mathscr Z_{\rho,J_\alpha,J,K'}(t)(\sigma,\zeta,w')
	:=
	\sum_{\ell\geq1}
	\int_0^t
	H_{\ell,\rho}(r,w',\sigma,\zeta)
	\,dW_\ell(r).
	\label{eq:sliced-spatial-martingale}
\end{align}
The coefficient bound \eqref{eq:noise-local-bound} first realizes
\eqref{eq:sliced-spatial-martingale} in 
$C([0,T];L^2(J_\alpha\times J\times K'))$. 
The parameterized construction used in
\eqref{eq:scalar-stochastic-fubini}, now with values in $L^2(K')$, gives a
jointly measurable representative such that, for
$(\Prob\otimes\mathcal L^{r_\alpha+1})$-almost 
every $(\omega',\sigma,\zeta)$, its $(\sigma,\zeta)$-section 
belongs to $C([0,T];L^2(K'))$ 
and starts from zero.  Uniqueness of Hilbert-space-valued 
It\^o integrals gives the restriction identities 
when $J_\alpha$, $J$, or $K'$ is enlarged.
The Hilbert-space BDG inequality and Tonelli also give
\begin{equation}\label{eq:fixed-parameter-spatial-martingale-bound}
	\E
	\int_{J_\alpha}\int_J
	\sup_{0\leq t\leq T}
	\norm{
	\mathscr Z_{\rho,J_\alpha,J,K'}(t)(\sigma,\zeta,\cdot)
	}_{L^2(K')}^2
	\,d\zeta\,d\sigma
	<\infty.
\end{equation}
Hence, for
$(\Prob\otimes\mathcal L^{r_\alpha})$-almost every $(\omega',\sigma)$,
the fixed-$\sigma$ section belongs to
$C([0,T];L^2(J\times K'))$: its fixed-$(\sigma,\zeta)$ paths are continuous
in $L^2(K')$, while
\eqref{eq:fixed-parameter-spatial-martingale-bound} supplies the dominating
function.  The countable intersection of these assertions and the
restriction identities gives clause~\textup{(c)} in the definition of
$\mathcal C_\alpha(\omega)$.
These identities are fixed simultaneously over the prescribed countable
exhaustions and included in $\Omega_{\mathrm{mar}}$.
By \eqref{eq:reduction-coordinate-recursion},
\eqref{eq:sliced-spatial-martingale} is exactly the
$(\sigma,\zeta)$-section of
\eqref{eq:reduction-spatial-martingale} for the reduction $\beta$.  The same
identification holds for the scalar processes
\eqref{eq:reduction-scalar-martingales}.  By
\eqref{eq:recursive-admissible-parameters}, almost every $\zeta\in\R$
retains all processes required at every later reduction.

Because $\sigma\in\mathcal A_\alpha(\omega)\subset
\mathcal C_\alpha(\omega)$, clause~\textup{(b)} and
\eqref{eq:fixed-parameter-stochastic-fubini-bound} give, at the fixed pair
$(\omega,\sigma)$, the scalar identity
\eqref{eq:scalar-stochastic-fubini} for almost every $\zeta$,
$z_{\phi,J_\alpha,J}(\omega,\cdot,\sigma,\cdot)
\in C([0,T];L^2(J))$, the analogous conclusion for
$M_\psi(\omega,\cdot,\sigma,\cdot)$ in
\eqref{eq:sliced-time-martingale}, and the fixed-$\sigma$ form of
\eqref{eq:mollified-stochastic-fubini} in $L^2(J_0)$ whenever
$J_0\Subset J$.  Clause~\textup{(c)} gives
$\mathscr Z_{\rho,J_\alpha,J,K'}
(\omega,\cdot)(\sigma,\cdot,\cdot)
\in C([0,T];L^2(J\times K'))$
and, for almost every $\zeta\in J$, a fixed-$(\sigma,\zeta)$ path in
$C([0,T];L^2(K'))$.  These conclusions hold simultaneously for the
prescribed countable test functions and localizations and are consistent
under restriction.  Hence, after $(\omega,\sigma)$ has been fixed, only
Lebesgue-null sets in $\zeta$ remain in the following intersection.
Intersect the full-measure sets of transverse coordinates supplied by the
Lebesgue differentiation theorem,
Lemma~\ref{lem:pathwise-measure-slicing}, and
\eqref{eq:scalar-stochastic-fubini} and its fixed-$\sigma$ restriction
identities, together with the fixed-$\sigma$
representatives of \eqref{eq:mollified-stochastic-fubini} for every
prescribed mollifier index and the section sets for
\eqref{eq:sliced-time-martingale} and
\eqref{eq:sliced-spatial-martingale} for every prescribed test function and
localization, and with the recursively admissible reduced-parameter sets
from \eqref{eq:recursive-admissible-parameters}.  Call the resulting
full-measure intersection
$S_{\omega,\sigma}$.  Passing to the limit in
\eqref{eq:kinetic-with-trace} for the transformed data specified in
Lemma~\ref{lem:pathwise-linear-change}, tested with
$\phi(t,w',\lambda)\theta_j(w_q-\zeta)$, gives
\eqref{eq:pathwise-sliced-kinetic-identity} for every
$\phi\in\mathscr T_I$ and $\zeta\in S_{\omega,\sigma}$.

For $\psi\in\mathscr D_I$, use the identities corresponding to
$\theta(t)\psi(w',\lambda)$.  Recall from
\eqref{eq:sliced-time-martingale} that
$M_{\psi,\zeta}^\omega(t):=M_\psi(\omega,t,\sigma,\zeta)$ is the continuous
sample path of the scalar It\^o integral constructed using the
constant-in-time extension
$\widehat\psi(t,w',\lambda)=\psi(w',\lambda)$; the fixed $\sigma$ is
suppressed in its notation.  For every $\theta\in\Theta$, the parameterized
It\^o construction gives
\begin{equation*}
	Z_{\theta\psi,\zeta}^\omega(T)
	=
	\int_0^T
	\theta(t)
	\,dM_{\psi,\zeta}^\omega(t)
	=
	-
	\int_0^T
	\theta'(t)M_{\psi,\zeta}^\omega(t)
	\,dt.
\end{equation*}
%The one-dimensional distributional argument used in
%Lemma~\ref{lem:time-integrated}, together with the local mass bound
%\eqref{eq:sliced-measure-local-mass}, yields a unique lower-dimensional
%weak trace $h_{0,\zeta}^\omega$ satisfying the analogue of
%\eqref{eq:pathwise-weak-trace} on $I$.

The $C^1$-density built into $\mathscr T_I$ in
Subsection~\ref{subsec:countable-reduction}, together with the continuity
of the spatial martingale pairing, extends
\eqref{eq:pathwise-sliced-kinetic-identity} to the prescribed extended
test functions
\[
\eta(t,w')\rho(\lambda)
\]
supported away from $t=0$. The cutoff argument used in
Lemma~\ref{lem:kinetic-with-trace}, together with
\eqref{eq:sliced-measure-local-mass}, then yields the lower-dimensional
analogue of \eqref{eq:kinetic-with-trace} for the prescribed determining
family localized to $I$. Combining this identity with
\eqref{eq:sliced-spatial-martingale} gives the lower-dimensional analogues
of \eqref{eq:pathwise-averaged-identity} and
\eqref{eq:pathwise-averaged-identity-with-trace}.

For the fixed $\zeta\in S_{\omega,\sigma}$, write
\[
\widetilde m_\zeta^\omega
:=
\widetilde m_\zeta^{\omega,\sigma}
\]
for the Radon measure constructed in
\eqref{eq:def-sliced-kinetic-measure}. The local mass bound
\eqref{eq:sliced-measure-local-mass} holds for this measure.

The one-dimensional distributional argument used in
Lemma~\ref{lem:time-integrated} then yields a unique lower-dimensional
weak trace $h_{0,\zeta}^\omega$ satisfying the analogue of
\eqref{eq:pathwise-weak-trace} on $I$. The $C^1$-density built into
$\mathscr T_I$ in Subsection~\ref{subsec:countable-reduction}, together
with the continuity of the spatial martingale pairing, extends
\eqref{eq:pathwise-sliced-kinetic-identity} to the prescribed extended
test functions
\[
\eta(t,w')\rho(\lambda)
\]
supported away from $t=0$. The cutoff argument used in
Lemma~\ref{lem:kinetic-with-trace}, together with
\eqref{eq:sliced-measure-local-mass}, therefore yields the
lower-dimensional analogue of \eqref{eq:kinetic-with-trace} for the
prescribed determining family localized to $I$, with kinetic-measure term
given by $\widetilde m_\zeta^\omega$. Combining this identity with
\eqref{eq:sliced-spatial-martingale} gives the lower-dimensional analogues
of \eqref{eq:pathwise-averaged-identity} and
\eqref{eq:pathwise-averaged-identity-with-trace}.

The measure $\widetilde m_\zeta^\omega$ is constructed separately for each
fixed parameter $(\sigma,\zeta)$. We do not select a Borel map
\[
\zeta
\longmapsto
\widetilde m_\zeta^\omega
\]
with values in
\[
\cM_{\loc}\bigl((0,T)\times\R^{q-1}\times I\bigr),
\]
nor a jointly measurable family
\[
(\omega,\sigma,\zeta)
\longmapsto
\widetilde m_\zeta^{\omega,\sigma}.
\]
Consequently, the section measures are not asserted to form a family of
random kinetic measures, and no adaptedness or predictability is claimed
for their cumulative processes. This causes no difficulty because no step
of the proof integrates $\widetilde m_\zeta^\omega$ with respect to
$\zeta$. The stochastic-Fubini arguments
\eqref{eq:scalar-stochastic-fubini},
\eqref{eq:mollified-stochastic-fubini}, and
\eqref{eq:sliced-spatial-martingale} concern only the jointly measurable
integrands \eqref{eq:sliced-integrand} and $H_{\ell,\rho}$ and are carried
out before evaluation at the fixed realization.
\end{proof}

The induction uses the following class, which records the identities and
martingale processes preserved by one-step reduction.

\begin{definition}[Admissible reduced kinetic realization]
\label{def:pathwise-kinetic-realization}
Let $\alpha\in\mathfrak R$ be a reduction datum of spatial dimension
$q=d_\alpha$, with kinetic interval $I_\alpha$, reduced flux $f_\alpha$,
noise coefficients
$
b_{\ell,\alpha},
$
and determining families
$
\mathscr D_\alpha
$
and
$
\mathscr T_\alpha.
$
For a fixed realization $\omega\in\Omega_*$, where $\Omega_*$ is defined in
\eqref{eq:common-pathwise-event}, and a recursively admissible accumulated
transverse parameter
$
\sigma\in\mathcal A_\alpha(\omega)\subset\R^{r_\alpha},
$
with $\mathcal A_\alpha(\omega)$ defined in
\eqref{eq:recursive-admissible-parameters},
an admissible reduced kinetic realization associated with $\alpha$ consists
of the following objects:
\begin{enumerate}[label=\textup{(\roman*)}]
	\item the measurable function
	$
	u_\alpha^{\omega,\sigma}:
	(0,T)\times\R^q\to[-M,M]
	$
	obtained from \eqref{eq:reduction-random-sections}, and
	$
	h_\alpha^{\omega,\sigma}
	=
	\1_{\{\lambda<u_\alpha^{\omega,\sigma}\}};
	$

	\item a locally finite nonnegative Radon measure
	$
	m_\alpha^{\omega,\sigma}
	$
	on
	$
	(0,T)\times\R^q\times I_\alpha
	$
	with kinetic support in $[-M,M]\cap I_\alpha$ and finite mass on
	$
	(0,T)\times K\times I'
	$
	for every $K\Subset\R^q$ and $I'\Subset I_\alpha$;

	\item the continuous scalar paths
	$
	M_{\alpha,\psi}^{\omega,\sigma}
	$
	and
	$
	Z_{\alpha,\phi}^{\omega,\sigma},
	$
	starting from zero, for every
	$
	\psi\in\mathscr D_\alpha
	$
	and
	$
	\phi\in\mathscr T_\alpha,
	$
	such that the localized time-integrated identity
	\eqref{eq:time-integrated} and the reduced kinetic identity---namely,
	\eqref{eq:kinetic-weak} for $\alpha_0$ and
	\eqref{eq:pathwise-sliced-kinetic-identity} after a reduction---hold
	with the coefficients associated with $\alpha$;

	\item the unique weak trace
	$
	h_{0,\alpha}^{\omega,\sigma}
	$
	characterized by
	\eqref{eq:pathwise-weak-trace}, together with the kinetic identity with
	the weak trace
	\eqref{eq:kinetic-with-trace}, both localized to $I_\alpha$.  The latter
	identity is required for the predetermined tensor family generated by
	$\Theta$ and $\mathscr D_\alpha$, together with the translations,
	dilations, mollifications, and fixed linear pullbacks assigned to
	$\mathfrak R$.  For test functions in $\mathscr T_\alpha$, the stochastic term
	is the corresponding scalar path from \textup{(iii)}.  For an extended
	tensor
	$
	\eta(t,x)\rho(\lambda)
	$
	and its prescribed pullbacks, it is the pairing of $\eta$ with the
	process in \textup{(v)}, as in
	\eqref{eq:pathwise-martingale-pairing};

	\item for every $\rho\in\mathcal R$ with
	$\supp\rho\Subset I_\alpha$,
	the spatial martingale path
	$
	\mathscr M_\alpha^{\rho,\omega,\sigma}
	$
	obtained from the fixed-$(\omega,\sigma)$ section of
	\eqref{eq:reduction-spatial-martingale}, and taking values
	in
	$C([0,T];L^2_{\loc}(\R^q))$
	starting from zero.  With $\mathscr M^\rho$ replaced by
	$\mathscr M_\alpha^{\rho,\omega,\sigma}$ and with the variables and
	coefficients associated with $\alpha$, it satisfies
	\eqref{eq:pathwise-martingale-pairing},
	\eqref{eq:pathwise-averaged-identity}, and
	\eqref{eq:pathwise-averaged-identity-with-trace};

	\item if $q\geq1$, then for every one-step reduction
	$\beta$
	of $\alpha$ there is a set
	$S_{\alpha\beta}^{\omega,\sigma}\subset\R$
	of full Lebesgue measure, contained in
	$\{\zeta:(\sigma,\zeta)\in\mathcal A_\beta(\omega)\}$, such that, for every
	$\zeta\in S_{\alpha\beta}^{\omega,\sigma}$, 
	Lemma~\ref{lem:pathwise-slice-equation} supplies
	$
	m_\beta^{\omega,(\sigma,\zeta)}
	:=
	\widetilde m_\zeta^{\omega,\sigma}
	$
	through \eqref{eq:def-sliced-kinetic-measure},
	$
	Z_{\beta,\phi}^{\omega,(\sigma,\zeta)}
	:=
	Z_{\phi,\zeta}^\omega
	$
	satisfying \eqref{eq:pathwise-sliced-kinetic-identity}, and
	$
	M_{\beta,\psi}^{\omega,(\sigma,\zeta)}
	:=
	M_{\psi,\zeta}^\omega
	$
	through \eqref{eq:sliced-time-martingale}.  The restriction identities
	for \eqref{eq:sliced-spatial-martingale} define
	$
	\mathscr M_\beta^{\rho,\omega,(\sigma,\zeta)}
	$
	and give the reduced forms of
	\eqref{eq:pathwise-averaged-identity} and
	\eqref{eq:pathwise-averaged-identity-with-trace}.  The proof of
	Lemma~\ref{lem:pathwise-slice-equation} also gives
	$
	h_{0,\beta}^{\omega,(\sigma,\zeta)}
	:=
	h_{0,\zeta}^\omega
	$
	and the reduced form of \eqref{eq:kinetic-with-trace}.  Finally,
	\eqref{eq:reduction-spatial-martingale}--\eqref{eq:reduction-scalar-martingales}
	supply the martingale processes for all subsequent reductions at
	$(\sigma,\zeta)$.  The resulting section is an admissible reduced
	realization associated with $\beta$.
\end{enumerate}
Item \textup{(vi)} is vacuous in dimension zero.  Since every reduction
lowers the dimension by one, the recursion terminates after at most $q$
steps.  Starting with the empty parameter at $\alpha_0$,
which belongs to $\mathcal A_{\alpha_0}(\omega)$ by
\eqref{eq:recursive-admissibility-event} for $\omega\in\Omega_*$,
Lemma~\ref{lem:pathwise-slice-equation} constructs, for each already fixed
realization, the full-measure sets of next transverse coordinates in
\textup{(vi)}.  After one reduction, items
\textup{(iii)}--\textup{(v)} are supplied by
\eqref{eq:sliced-time-martingale},
\eqref{eq:pathwise-sliced-kinetic-identity}, and
\eqref{eq:sliced-spatial-martingale}, together with the weak-trace and cutoff
arguments in Lemma~\ref{lem:pathwise-slice-equation}.  No measurable family
of the section measures
$
m_\alpha^{\omega,\sigma}
$
is selected or integrated over $\sigma$.
\end{definition}

The localized blow-up results are stable within this class.  More precisely,
the proofs of Lemma~\ref{lem:pathwise-blowup-subsequence},
Proposition~\ref{prop:pathwise-condition-C}, and
Proposition~\ref{prop:pathwise-weak-blowup-limit} use only items
\textup{(ii)}, \textup{(iv)}, and \textup{(v)} of
Definition~\ref{def:pathwise-kinetic-realization}.  They therefore apply
verbatim for every reduction datum $\alpha\in\mathfrak R$, with the reduced
dimension $q=d_\alpha$ in place of $d$, the
coefficients associated with $\alpha$ in place of the original
coefficients, and every kinetic cutoff in those proofs supported compactly
in $I_\alpha$.

We can now apply the lower-dimensional trace statement at almost every
transverse coordinate.

\begin{proposition}[Pathwise strong trace on a degenerate interval]
\label{prop:pathwise-degenerate-truncation}
Fix a realization $\omega\in\Omega_*$, where $\Omega_*$ is defined in
\eqref{eq:common-pathwise-event}.  Let $\alpha\in\mathfrak R$ be a reduction
datum of spatial dimension $q=d_\alpha\geq1$, let
$
u=u_\alpha^{\omega,\sigma}
$
be an admissible reduced realization associated with $\alpha$ in the sense of
Definition~\ref{def:pathwise-kinetic-realization}
(so $\sigma\in\mathcal A_\alpha(\omega)$; the probability realization
$\omega$ and accumulated parameter $\sigma$ are fixed throughout), and let
$\beta$ be one of its one-step
reductions, with $d_\beta=q-1$.  Write
$
f:=f_\alpha.
$
Assume
Proposition~\ref{prop:pathwise-induction} at $\beta$.  Let
$(I,\xi_0,\xi,A,c)$ be the fixed data for the one-step reduction
$\alpha\to\beta$, so that
$(\xi_0,\xi)\in\R\times\R^q\setminus\{0\}$ and
\begin{equation*}
	\xi_0+\xi\cdot f'(\lambda)=0
	\qquad
	\text{for every }\lambda\in I
\end{equation*}
on an open interval $I\subset(-L,L)$.
Let $[a,b]\Subset I$ be the truncation interval attached to this reduction datum in
$\mathfrak R$. Then the truncation $T_{a,b}(u)$ has a strong initial
trace in $L^1_{\loc}(\R^q)$.
\end{proposition}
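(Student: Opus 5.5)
We pass to the transformed coordinates of Lemma~\ref{lem:pathwise-linear-change} and slice in the degenerate direction. Applying Lemma~\ref{lem:pathwise-linear-change} with $d=q$ and the fixed data $(A,c,I)$, we write $\widetilde u(t,w)=u(t,A^{-1}(w-ct))$ with $w=(w',w_q)$. The transformed data satisfy \eqref{eq:kinetic-with-trace} on $I$, with $\widetilde a_q\equiv0$ on $I$. By Lemma~\ref{lem:pathwise-slice-equation} and item~(vi) of Definition~\ref{def:pathwise-kinetic-realization}, there is a full-measure set $S_{\alpha\beta}^{\omega,\sigma}$ with the following property. For every $\zeta$ in this set, the section $\widetilde u_\zeta(t,w')=\widetilde u(t,w',\zeta)$ is an admissible reduced realization associated with $\beta$, with kinetic interval $I_\beta=D\supset I$ shrunk as in $\mathfrak R$ and attached truncation $[a,b]=[a_\beta,b_\beta]$. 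Proposition~\ref{prop:pathwise-induction} at $\beta$, assumed here, then gives for each such $\zeta$ a function $v_{0,\zeta}\in L^1_{\loc}(\R^{q-1})$ with values in $[a,b]$ such that
\[
\esslim_{t\downarrow0}\int_{K'}\abs{T_{a,b}(\widetilde u_\zeta(t,w'))-v_{0,\zeta}(w')}\,dw'=0
\qquad\text{for every }K'\Subset\R^{q-1}.
\]

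Next we identify the slices with the weak trace, which also removes any measurability issue in $\zeta$. Let $\widetilde h_0$ be the transformed weak trace \eqref{eq:transformed-weak-trace}, and let $h_{0,\zeta}$ be the weak trace of the section supplied by the proof of Lemma~\ref{lem:pathwise-slice-equation}. We will show $h_{0,\zeta}=\widetilde h_0(\cdot,\zeta,\cdot)$ on $\R^{q-1}\times I$ for a.e.\ $\zeta$. To do so, test \eqref{eq:kinetic-with-trace} for the transformed data with $\phi(t,w',\lambda)\theta_j(w_q-\zeta)$, where $\phi$ ranges over the determining family. Then pass to $j\to\infty$ exactly as in the derivation of \eqref{eq:pathwise-sliced-kinetic-identity}, but keep the $t=0$ boundary term. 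Its mollified version converges at Lebesgue points $\zeta$ of $w_q\mapsto\widetilde h_0(\cdot,w_q,\cdot)$, viewed as an $L^1_{\loc}$-valued map. Comparing with the lower-dimensional identity \eqref{eq:kinetic-with-trace} for the section, which contains $h_{0,\zeta}$, yields equality of pairings on a countable determining family and hence the claimed identification. Corollary~\ref{cor:essential-chi-rigidity}, applied to the truncations, shows that $h_{0,\zeta}$ restricted to $(a,b)$ equals the indicator $\1_{\{\lambda<v_{0,\zeta}\}}$. Define $\widetilde v_0(w',\zeta):=a+\int_a^b\widetilde h_0(w',\zeta,\lambda)\,d\lambda$. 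This is a measurable function of $(w',\zeta)$ that agrees with $v_{0,\zeta}(w')$ for a.e.\ $\zeta$.

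The remaining and main step is to integrate the sectionwise strong traces over $\zeta$. We must show
\[
\esslim_{t\downarrow0}\int_J\int_{K'}\abs{T_{a,b}(\widetilde u(t,w',\zeta))-\widetilde v_0(w',\zeta)}\,dw'\,d\zeta=0.
\]
The obstacle is that the essential-time exceptional sets depend on $\zeta$, so pointwise convergence in $\zeta$ does not directly give an $\esslim$ of the integral. We intend to avoid this through weak-to-strong rigidity. By Lemma~\ref{lem:weak-trace} for the transformed data and the identification above, $\widetilde h(t)\to\widetilde h_0$ essentially weak-$*$ on $K'\times J\times(a,b)$. Since $\widetilde h_0$ restricted to $(a,b)$ is a subgraph function of $\widetilde v_0$ for a.e.\ $(w',\zeta)$, Corollary~\ref{cor:essential-chi-rigidity} applied on $X=K'\times J$ gives the displayed strong convergence. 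Thus the sectionwise induction is used only to show that the weak trace is of subgraph form, and Fubini enters only in the identification step, where only Lebesgue-null sets in $\zeta$ are discarded. Finally, we undo the bi-Lipschitz affine change $w=Ax+ct$, as in the weak-trace part of the proof of Lemma~\ref{lem:pathwise-linear-change}. The $ct$-shift is handled by translation continuity of $\widetilde v_0$ in $L^1_{\loc}$. This gives a strong initial trace $v_0(x)=\widetilde v_0(Ax)$ of $T_{a,b}(u)$ in $L^1_{\loc}(\R^q)$.
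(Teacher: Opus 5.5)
Your overall architecture matches the paper's. You transform by $(A,c)$ and use item~\textup{(vi)} with Lemma~\ref{lem:pathwise-slice-equation} to get admissible $\beta$-sections for a.e.\ $\zeta$. You apply Proposition~\ref{prop:pathwise-induction} at $\beta$ on each section and identify the sectional weak traces $h_{0,\zeta}^\omega$ with the slices $\widetilde h_0^\omega(\cdot,\zeta,\cdot)$. You then conclude that the truncated weak trace is the subgraph of $\widetilde v_0=a+\int_a^b\widetilde h_0^\omega\,d\lambda$ and apply Corollary~\ref{cor:essential-chi-rigidity} on the full $q$-dimensional set. As you correctly say, this last step is what sidesteps the $\zeta$-dependent exceptional time sets, and the affine change is undone exactly as in the paper. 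A small point: the subgraph form of the sectional truncated trace is already part of the conclusion of Proposition~\ref{prop:pathwise-induction}. Recovering it from the strong trace is just uniqueness of weak limits, not Corollary~\ref{cor:essential-chi-rigidity}, which goes in the other direction.

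The one place you diverge is the identification step. There, ``exactly as in the derivation of \eqref{eq:pathwise-sliced-kinetic-identity}'' is not accurate, because to see the boundary term your test function must have $\phi(0,\cdot,\cdot)\neq0$. This causes two problems.

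First, Lemma~\ref{lem:pathwise-measure-slicing} gives $\int\psi\,\theta_j(w_q-\zeta)\,d\widetilde m^\omega\to\int\psi\,d\widetilde m^\omega_\zeta$ only for $\psi$ compactly supported in $t\in(0,T)$. Likewise, \eqref{eq:sliced-measure-local-mass} controls $\widetilde m^\omega_\zeta$ near $t=0$, but not the mollified masses uniformly in $j$. For $\psi=\partial_\lambda\phi$ you would additionally need
$\lim_{\delta\downarrow0}\limsup_{j\to\infty}\int_{\{t<\delta,\,\abs{w'}<R,\,\lambda\in I'\}}\theta_j(w_q-\zeta)\,d\widetilde m^\omega=0$
for a.e.\ $\zeta$. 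This requires a separate differentiation argument for the projected measures $E\mapsto\widetilde m^\omega(\{t<\delta,\,\abs{w'}<R,\,w_q\in E,\,\lambda\in I'\})$.

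Second, the mollified stochastic Fubini identities \eqref{eq:mollified-stochastic-fubini} are fixed on $\Omega_*$ only for $\phi\in\mathscr T_I$, whose time factors lie in $\Theta\subset C_c^1((0,T))$. The spatial-martingale pairing available for extended tensors cannot substitute for them. After the pullback $w=Ax+ct$, the time derivative of the test function produces the term $\xi_0\,\phi\,\theta_j'(w_q-\zeta)$. So these identities would have to be added to the countable preparation before $\omega$ is fixed.

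Both points are repairable, but the paper avoids them altogether. Set $A_{n,\phi}(\zeta)=n\int_0^{1/n}\int\int\widetilde h(t,w',\zeta,\lambda)\phi(w',\lambda)\,d\lambda\,dw'\,dt$. The sectional weak trace gives $A_{n,\phi}(\zeta)\to\int\int h^\omega_{0,\zeta}\phi$ for a.e.\ $\zeta$. On the other hand, the bound $\abs{A_{n,\phi}}\leq\norm{\phi}_{L^1}$, dominated convergence against $\chi(\zeta)$, and the transformed weak trace \eqref{eq:transformed-weak-trace} tested with $\phi(w',\lambda)\chi(w_q)$ identify the same limit with $\int\int\widetilde h^\omega_0(w',\zeta,\lambda)\phi$. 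Only the two weak-trace convergences and Fubini in $\zeta$ enter, and no kinetic identity has to be sliced up to $t=0$.
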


\begin{proof}
Write $(\omega,\sigma)$ for the fixed realization and accumulated parameter
underlying $u$.  The corresponding transform
\eqref{eq:random-reduction-transform} is
$$
\widetilde u(t,w)
:=
u_\alpha^{\omega,\sigma}
\bigl(
t,A^{-1}(w-ct)
\bigr).
$$
By the corresponding version of
\eqref{eq:transformed-weak-trace}, its weak trace is
$$
\widetilde h_{0,\alpha}^{\omega,\sigma}(w,\lambda)
=
h_{0,\alpha}^{\omega,\sigma}(A^{-1}w,\lambda).
$$
For the rest of the proof, suppress $\sigma$ and the reduction index, and write
$
\widetilde h_0^\omega
:=
\widetilde h_{0,\alpha}^{\omega,\sigma}.
$

By item \textup{(vi)} of
Definition~\ref{def:pathwise-kinetic-realization} and
Lemma~\ref{lem:pathwise-slice-equation}, for every $\zeta$ in a set of full
Lebesgue measure, the section
$\widetilde u_\zeta(t,w')=\widetilde u(t,w',\zeta)$
is an admissible reduced realization associated with $\beta$, with identity
\eqref{eq:pathwise-sliced-kinetic-identity}, martingale processes
\eqref{eq:sliced-scalar-martingale}--\eqref{eq:sliced-spatial-martingale},
and all processes required by subsequent reductions in
\eqref{eq:reduction-spatial-martingale}--\eqref{eq:reduction-scalar-martingales}.
The induction hypothesis therefore gives a strong initial trace for
$T_{a,b}(\widetilde u_\zeta)$.

We now identify the weak trace of the lower-dimensional equation
\eqref{eq:pathwise-sliced-kinetic-identity} at coordinate $\zeta$ with the
corresponding section of $\widetilde h_0^\omega$, without selecting these
lower-dimensional weak traces as a measurable family.

Let $\cD\subset C_c^1(\R^{q-1}\times I)$ be countable and dense in
$L^1$ on every compact cylinder. For
$\phi\in\cD$ and all sufficiently large $n$, define
$$
A_{n,\phi}(\zeta)
:=
n\int_0^{1/n}
\int_{\R^{q-1}}\int_I
\widetilde h(t,w',\zeta,\lambda)
\phi(w',\lambda)
\,d\lambda\,dw'\,dt.
$$
For the fixed realization, $A_{n,\phi}$ is a measurable scalar function of
$\zeta$.  Define
$$
L_\phi(\zeta)
:=
\lim_{n\to\infty}A_{n,\phi}(\zeta)
$$
where the limit exists, and set $L_\phi(\zeta)=0$ otherwise.

For every $\zeta\in S_{\omega,\sigma}$ outside one common null set, the weak trace
constructed from \eqref{eq:pathwise-sliced-kinetic-identity} gives
\begin{equation}\label{eq:sliced-pairing-limit}
	L_\phi(\zeta)
	=
	\int_{\R^{q-1}}\int_I
	h_{0,\zeta}^\omega(w',\lambda)
	\phi(w',\lambda)
	\,d\lambda\,dw',
\end{equation}
where $h_{0,\zeta}^\omega$ denotes the uniquely determined weak trace of the
lower-dimensional equation at coordinate $\zeta$.  No measurable choice of the
family
$$
\zeta\longmapsto h_{0,\zeta}^\omega
$$
is made.

Let $\chi\in C_c^1(\R)$.
Since
$$
\abs{A_{n,\phi}(\zeta)}
\leq
\norm{\phi}_{L^1(\R^{q-1}\times I)},
$$
dominated convergence gives
$$
\int_{\R}
\chi(\zeta)L_\phi(\zeta)
\,d\zeta
=
\lim_{n\to\infty}
\int_{\R}
\chi(\zeta)A_{n,\phi}(\zeta)
\,d\zeta.
$$
The right-hand side equals
\begin{align*}
	\lim_{n\to\infty}
	n\int_0^{1/n}
	\int_{\R^q}\int_I
	\widetilde h(t,w,\lambda)
	\phi(w',\lambda)\chi(w_q)
	\,d\lambda\,dw\,dt.
\end{align*}
Equations~\eqref{eq:transformed-weak-trace} and
\eqref{eq:pathwise-weak-trace} identify this limit as
$$
\int_{\R}
\chi(\zeta)
\int_{\R^{q-1}}\int_I
\widetilde h_0^\omega(w',\zeta,\lambda)
\phi(w',\lambda)
\,d\lambda\,dw'\,d\zeta.
$$
Since $\chi$ is arbitrary,
\begin{equation}\label{eq:transformed-slice-pairing}
L_\phi(\zeta)
=
\int_{\R^{q-1}}\int_I
\widetilde h_0^\omega(w',\zeta,\lambda)
\phi(w',\lambda)
\,d\lambda\,dw'
\end{equation}
for almost every $\zeta\in\R$.

Taking a countable intersection over $\phi\in\cD$ and comparing
\eqref{eq:transformed-slice-pairing} with
\eqref{eq:sliced-pairing-limit}, density gives
\begin{equation}\label{eq:pathwise-slice-trace-identification}
	h_{0,\zeta}^\omega(w',\lambda)
	=
	\widetilde h_0^\omega(w',\zeta,\lambda)
	\quad\text{for almost every }(w',\lambda)
\end{equation}
for almost every admissible $\zeta\in\R$.

Define
\begin{equation*}
	\widetilde g_0^{a,b}(w,\lambda)
	:=
	\1_{\{\lambda<a\}}
	+
	\1_{\{a<\lambda<b\}}
	\widetilde h_0^\omega(w,\lambda).
\end{equation*}
For almost every $(t,w,\lambda)$,
$$
\1_{\{\lambda<T_{a,b}(\widetilde u(t,w))\}}
=
\1_{\{\lambda<a\}}
+
\1_{\{a<\lambda<b\}}
\widetilde h(t,w,\lambda).
$$
By \eqref{eq:transformed-weak-trace} and the density extension of
\eqref{eq:pathwise-weak-trace} from smooth test functions to all $L^1$ test functions on
compact cylinders,
$\widetilde g_0^{a,b}$ is the pathwise weak trace of
$\1_{\{\lambda<T_{a,b}(\widetilde u)\}}$.

By \eqref{eq:pathwise-slice-trace-identification} and the induction
hypothesis, for almost every $\zeta\in\R$ the section
$\widetilde g_0^{a,b}(\cdot,\zeta,\cdot)$ is a subgraph function. Define
$$
\widetilde v_0(w)
:=
\int_{-L}^L
\widetilde g_0^{a,b}(w,\lambda)
\,d\lambda
-
L.
$$
Then $\widetilde v_0$ is measurable, takes values in $[a,b]$, and
$\widetilde g_0^{a,b}(w,\lambda)
=\1_{\{\lambda<\widetilde v_0(w)\}}$ for almost every $(w,\lambda)$.
Corollary~\ref{cor:essential-chi-rigidity} therefore gives, for every compact
$\widetilde K\Subset\R^q$,
\begin{equation}\label{eq:transformed-truncated-strong-trace}
\esslim_{t\downarrow0}
\int_{\widetilde K}
\abs{
T_{a,b}(\widetilde u(t,w))
-
\widetilde v_0(w)
}
\,dw
=
0.
\end{equation}

Finally, local $L^1$ convergence is preserved by the invertible affine
change of variables. Define $v_0(x):=\widetilde v_0(Ax)$. Since
$T_{a,b}(u(t,x))=T_{a,b}(\widetilde u(t,Ax+ct))$,
\eqref{eq:transformed-truncated-strong-trace} and translation continuity of
$\widetilde v_0$ in $L^1_{\loc}$ give
$$
\esslim_{t\downarrow0}
\int_K
\abs{
T_{a,b}(u(t,x))
-
v_0(x)
}
\,dx
=
0
$$
for every compact $K\Subset\R^q$.

\end{proof}

%%%%%%%%%%%%%%%%%%%%%%%%%
%%%%%%%%%%%%%%%%%%%%%%%%%
\section{Identification of the pathwise weak trace}
\label{sec:pathwise-identification}

Fix a realization $\omega\in\Omega_*$, where $\Omega_*$ is defined in
\eqref{eq:common-pathwise-event}.  The induction below uses
\eqref{eq:kinetic-with-trace}, the weak convergence
\eqref{eq:pathwise-weak-trace}, and the local finiteness following from
\eqref{eq:local-mass-event}, namely
$
m(\omega;(0,T)\times K\times[-R,R])<\infty
$
for every compact $K\Subset\R^d$ and every $R>0$.  It also uses the
martingale processes
\eqref{eq:spatial-martingale-process} and
\eqref{eq:reduction-spatial-martingale}.  For each reduction datum, item
\textup{(vi)} of
Definition~\ref{def:pathwise-kinetic-realization} supplies a full-measure
set of next transverse coordinates and all processes needed in subsequent
reductions.
The induction is applied separately at each such coordinate through
\eqref{eq:pathwise-sliced-kinetic-identity}; no section measure or weak
trace is integrated with respect to a transverse coordinate.  The only
transverse-coordinate functions integrated below are the explicit scalar
averages $A_{n,\phi}$.

Let $I\subset(-L,L)$ be open and let $[a,b]\Subset I$.  The kinetic
function of the truncation $T_{a,b}(u)$ is, up to the irrelevant endpoint
levels,
\begin{equation*}
	h^{a,b}(t,x,\lambda)
	:=
	\1_{\{\lambda<T_{a,b}(u(t,x))\}}
	=
	\1_{\{\lambda<a\}}
	+
	\1_{\{a<\lambda<b\}}h(t,x,\lambda).
\end{equation*}
Correspondingly, define
\begin{equation}\label{eq:truncated-pathwise-weak-trace}
	h_0^{\omega,a,b}(x,\lambda)
	:=
	\1_{\{\lambda<a\}}
	+
	\1_{\{a<\lambda<b\}}h_0^\omega(x,\lambda).
\end{equation}
Approximating $\1_{(a,b)}$ in $L^1(\R_\lambda)$ by smooth kinetic
cutoffs and using the uniform bound $0\leq h\leq1$, we see that
$h_0^{\omega,a,b}$ is the pathwise weak trace of $h^{a,b}$.

The next proposition states the induction assertion for these truncated
kinetic functions.

\begin{proposition}[Pathwise induction]
\label{prop:pathwise-induction}
Let $\alpha\in\mathfrak R$ be a reduction datum of spatial dimension
$q=d_\alpha\geq0$, with kinetic interval $I=I_\alpha$ and attached
truncation $[a,b]=[a_\alpha,b_\alpha]\Subset I$.  For the fixed
$\omega\in\Omega_*$, with $\Omega_*$ defined in
\eqref{eq:common-pathwise-event}, and a fixed recursively admissible
accumulated transverse parameter
$\sigma\in\mathcal A_\alpha(\omega)$,
let $u=u_\alpha^{\omega,\sigma}$ be an admissible reduced realization
associated with $\alpha$ in the sense of
Definition~\ref{def:pathwise-kinetic-realization}.  In this statement and
its proof, write
$
f:=f_\alpha
$
and suppress the fixed accumulated parameter $\sigma$ from
$u$, $h_0^\omega$, and the reduced coefficients.  Then there exists a
measurable function $u_0^{\omega,a,b}:\R^q\to[a,b]$ such that
$$
h_0^{\omega,a,b}(x,\lambda)
=
\1_{\{\lambda<u_0^{\omega,a,b}(x)\}}
$$
for almost every $(x,\lambda)$.  Consequently, for every compact
$K\Subset\R^q$,
$$
\esslim_{t\downarrow0}
\int_K
\abs{
T_{a,b}(u(t,x))
-
u_0^{\omega,a,b}(x)
}
\,dx
=
0.
$$
\end{proposition}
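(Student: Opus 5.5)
We argue by induction on the spatial dimension $q=d_\alpha$. Once the subgraph representation of $h_0^{\omega,a,b}$ is known, the strong convergence follows from Corollary~\ref{cor:essential-chi-rigidity} applied on $X=K$. The weak convergence required there is the fact, noted before the statement, that $h_0^{\omega,a,b}$ is the pathwise weak trace of $h^{a,b}$. So the whole task is to show that $h_0^{\omega,a,b}$ is a subgraph function. By Lemma~\ref{lem:rigidity-subgraph} and a blow-up argument, this reduces to strong compactness of rescaled truncations.

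In the base case $q=0$ there is no spatial variable and no flux direction to exploit. Here $u$ depends only on $t$. Proposition~\ref{prop:pathwise-weak-blowup-limit}, with $q=0$ and $h^{\varepsilon}(\tau,\lambda)=h(\varepsilon\tau,\lambda)$, identifies every weak limit as the constant $h_0^\omega(\lambda)$. Compactness comes from the time equation itself: in dimension zero, Proposition~\ref{prop:pathwise-condition-C} gives $\partial_\tau$ of bounded level functions converging to zero in $H^{-1}_{\loc}$. Hence the rescaled truncations $v_n$ converge strongly to their average, which is a limit of subgraphs and therefore a subgraph.

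For the inductive step with $q\geq1$, fix any sequence $\varepsilon_n\downarrow0$. Take the subsequence and the set $Y_\omega$ of Lemma~\ref{lem:pathwise-blowup-subsequence}, fix $y\in Y_\omega$, and set $v_n=T_{a,b}(u^{\varepsilon_n,y})$. By Proposition~\ref{prop:pathwise-condition-C}, $(v_n)$ satisfies Panov's condition \eqref{eq:panov-C} with $\Phi(r)=(r,f(r))$. We then apply Lemma~\ref{lem:panov-closure} with $J=I$, $K_0=[a,b]$, and $F$ the countable dense levels fixed in $\mathfrak R$. This requires strong precompactness of $T_{c,e}(v_n)$ for each degenerate interval $[c,e]\in\mathscr I_\Phi([a,b])$. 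Each such interval is, by construction of $\mathfrak R$, the truncation attached to a one-step reduction $\alpha\to\beta$. Proposition~\ref{prop:pathwise-induction} at $\beta$ is available by the induction hypothesis. So Proposition~\ref{prop:pathwise-degenerate-truncation} gives $T_{c,e}(u)$ a strong trace $w_0^{c,e}$ in $L^1_{\loc}(\R^q)$. Lemma~\ref{lem:pathwise-trace-to-blowup}, applied simultaneously to the countable family $\{(T_{c,e}(u),w_0^{c,e})\}$ and to the same $\varepsilon_n$, yields a further subsequence along which $T_{c,e}(v_n)\to w_0^{c,e}(y)$ in $L^1_{\loc}$ for almost every $y$. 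This step requires intersecting the exceptional $y$-sets countably and then re-extracting; these extractions must be coordinated with Lemma~\ref{lem:pathwise-blowup-subsequence} by one more diagonal argument. Lemma~\ref{lem:panov-closure} then gives strong precompactness of $(v_n)$ in $L^1_{\loc}((0,\infty)\times\R^q)$.

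Strong convergence of a subsequence $v_n\to v$ implies strong convergence of the subgraph functions $\1_{\{\lambda<v_n\}}$ to $\1_{\{\lambda<v\}}$. By Proposition~\ref{prop:pathwise-weak-blowup-limit}, the weak limit of $h^{a,b}_n$ is $h_0^{\omega,a,b}(y,\lambda)$, where we use the truncated identity obtained by inserting the cutoffs $\1_{\{\lambda<a\}}$ and $\1_{(a,b)}$. Hence $\lambda\mapsto h_0^{\omega,a,b}(y,\lambda)$ is a subgraph $\1_{\{\lambda<v(y)\}}$, with $v$ forced to be constant, for almost every $y$. Setting $u_0^{\omega,a,b}(y):=\int_{-L}^L h_0^{\omega,a,b}(y,\lambda)\,d\lambda-L$ gives a measurable representative with values in $[a,b]$. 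The main obstacle is the step just described: coordinating the blow-up subsequence for the whole countable family of degenerate truncations with the Panov closure lemma. The difficulty is that Lemma~\ref{lem:panov-closure} produces the degenerate interval after the Young measure is fixed. I would handle this by precomputing the simultaneous extraction over all of $\mathscr I_\Phi([a,b])$ before invoking the lemma.
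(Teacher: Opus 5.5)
Your proposal is correct, and its inductive step is the paper's argument: a common blow-up extraction combined diagonally with Lemma~\ref{lem:pathwise-trace-to-blowup} over $\mathscr I_{a,b}$, then Proposition~\ref{prop:pathwise-condition-C}, Lemma~\ref{lem:panov-closure}, identification of the limit via Proposition~\ref{prop:pathwise-weak-blowup-limit}, constancy in $(\tau,z)$, and Corollary~\ref{cor:essential-chi-rigidity}; the paper resolves the extraction-order issue exactly as you do, by extracting simultaneously over all of $\mathscr I_{a,b}$ before invoking the closure lemma. The only difference is the base case $q=0$, where the paper avoids blow-up entirely: $T_{a,b}(u(t_n))$ is a bounded sequence of reals, so Bolzano--Weierstrass plus uniqueness of the weak trace force $h_0^{\omega,a,b}=\1_{\{\lambda<r\}}$, whereas your route also works but needs one extra standard step from $\partial_\tau$ of the level functions tending to zero in $H^{-1}_{\loc}$ to strong compactness of $(v_n)$ (for instance Theorem~\ref{thm:panov-localization} with $N=1$ and $\Phi(r)=r$, which has no degenerate direction).
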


\begin{proof}
We argue by induction on the spatial dimension.

\emph{Base case $q=0$.}
Regard $\R^0=\{\varnothing\}$ as the one-point measure space.
Let $E_\omega\subset(0,T)$ be a full-measure set on which the pairings in
\eqref{eq:pathwise-weak-trace} are represented, and take any
$t_n\in E_\omega$ with $t_n\downarrow0$. Since
$T_{a,b}(u(t_n))\in[a,b]$,
there is a subsequence, not relabeled, and a number $r\in[a,b]$ such that
$T_{a,b}(u(t_n))\to r$.
The corresponding kinetic functions satisfy
$$
\1_{\{\lambda<T_{a,b}(u(t_n))\}}
\longrightarrow
\1_{\{\lambda<r\}}
\quad\text{strongly in }L^1(-L,L).
$$
On the other hand, every such subsequence has the same weak limit, namely the
unique pathwise weak trace $h_0^{\omega,a,b}$.  Hence
$h_0^{\omega,a,b}(\lambda)=\1_{\{\lambda<r\}}$ almost everywhere.
Corollary~\ref{cor:essential-chi-rigidity} then gives the
strong trace.

\emph{Inductive step.}
Assume Proposition~\ref{prop:pathwise-induction} in spatial dimension
$q-1$.  Fix a reduction datum $\alpha\in\mathfrak R$ with
$d_\alpha=q$, $I_\alpha=I$, and
$[a_\alpha,b_\alpha]=[a,b]\Subset I$, and fix an admissible reduced
realization associated with $\alpha$.

Let $F_I\subset I$ be the countable dense set fixed for $\alpha$. Let
$\mathscr I_{a,b}$ be the countable family
of all intervals
$$
[c,e]\Subset(a,b),
\qquad
c,e\in F_I,
\qquad
c<e,
$$
for which one can find an open interval
$$
D\Subset I,
\qquad
[c,e]\Subset D,
$$
and a vector $\eta\in\R^{q+1}\setminus\{0\}$ such that
\begin{equation*}
	\eta\cdot\Phi(r)
	=
	\mathrm{const}
	\qquad
	\text{for every }r\in D,
\end{equation*}
where $\Phi(r)=(r,f(r))$.

For every $[c,e]\in\mathscr I_{a,b}$, the construction of
$\mathfrak R$ fixes an interval $D$ and a nonzero vector $\eta$ such that
$\eta\cdot\Phi$ is constant on $D$, together with the corresponding reduction
data.  The induction hypothesis and
Proposition~\ref{prop:pathwise-degenerate-truncation} therefore show that
$T_{c,e}(u)$ has a pathwise strong initial trace.

Fix a sequence
$
\varepsilon_n\downarrow0.
$
Apply Lemma~\ref{lem:pathwise-blowup-subsequence} and then
Lemma~\ref{lem:pathwise-trace-to-blowup}, diagonally over the countable family
$$
\left\{
T_{c,e}(u):
[c,e]\in\mathscr I_{a,b}
\right\}.
$$
After passing to a common subsequence, there exists a set
$$
Y_\omega\subset\R^q,
\qquad
\mathcal L^q(\R^q\setminus Y_\omega)=0,
$$
such that \eqref{eq:pathwise-defect-vanish},
\eqref{eq:pathwise-martingale-vanish}, and
\eqref{eq:pathwise-trace-lebesgue} hold, and such that
\eqref{eq:pathwise-strong-trace-blowup} holds for every
$T_{c,e}(u)$ with $[c,e]\in\mathscr I_{a,b}$.

Fix $y\in Y_\omega$, and define
$v_n(\tau,z):=T_{a,b}(u^{\varepsilon_n,y}(\tau,z))$.
Proposition~\ref{prop:pathwise-condition-C} shows that $(v_n)$ satisfies
Panov's compactness condition \eqref{eq:panov-C}.

If $[c,e]\in\mathscr I_{a,b}$, then
$T_{c,e}(v_n)=T_{c,e}(u^{\varepsilon_n,y})$,
because $[c,e]\Subset(a,b)$.
Lemma~\ref{lem:pathwise-trace-to-blowup} therefore gives strong local
convergence, after the common extraction, for every one of these truncated
sequences. Lemma~\ref{lem:panov-closure}, applied with $K_0=[a,b]$, now
implies that $(v_n)$ is strongly precompact in
$L^1_{\loc}
\bigl(
(0,\infty)\times\R^q
\bigr)$.

Passing to a further subsequence, we may assume that
$$
v_n\longrightarrow\overline v
\quad\text{strongly in }
L^1_{\loc}
\bigl(
(0,\infty)\times\R^q
\bigr).
$$
Since $v_n\in[a,b]$, we also have
$\overline v\in[a,b]$ almost everywhere.

After another extraction, the kinetic functions $h^{\varepsilon_n,y}$
converge weak-$*$ locally.
Proposition~\ref{prop:pathwise-weak-blowup-limit} identifies their weak limit
on $I$ as $h_0^\omega(y,\lambda)$, which is independent of $(\tau,z)$.

For every $\lambda\in(a,b)$,
$\1_{\{\lambda<v_n(\tau,z)\}}
=h^{\varepsilon_n,y}(\tau,z,\lambda)$.
Moreover, on every bounded cylinder $Q\Subset(0,\infty)\times\R^q$,
$$
\int_Q\int_a^b
\abs{
\1_{\{\lambda<v_n(\tau,z)\}}
-
\1_{\{\lambda<\overline v(\tau,z)\}}
}
\,d\lambda\,dz\,d\tau
=
\int_Q
\abs{v_n-\overline v}
\,dz\,d\tau.
$$
Thus the strong convergence of $v_n$ gives
$$
\1_{\{\lambda<v_n\}}
\longrightarrow
\1_{\{\lambda<\overline v\}}
\quad\text{strongly in }
L^1_{\loc}
\bigl(
(0,\infty)\times\R^q\times(a,b)
\bigr).
$$
Comparing this strong limit with the weak-$*$ limit of
$h^{\varepsilon_n,y}$ identified in
Proposition~\ref{prop:pathwise-weak-blowup-limit} gives
\begin{equation}\label{eq:pointwise-truncated-equilibrium}
	h_0^\omega(y,\lambda)
	=
	\1_{\{\lambda<\overline v(\tau,z)\}}
\end{equation}
for almost every
$(\tau,z,\lambda)\in(0,\infty)\times\R^q\times(a,b)$.

Since the left-hand side of
\eqref{eq:pointwise-truncated-equilibrium} is independent of $(\tau,z)$ and
the map
$$
r
\longmapsto
\bigl(
\lambda\longmapsto\1_{\{\lambda<r\}}
\bigr)
$$
is injective from $[a,b]$ into $L^1(a,b)$,
\eqref{eq:pointwise-truncated-equilibrium} implies that
$\overline v$ is constant in $(\tau,z)$.  Hence there exists a number
$r_{a,b}^\omega(y)\in[a,b]$ such that
\begin{equation}\label{eq:pointwise-trace-equilibrium}
h_0^\omega(y,\lambda)
=
\1_{\{\lambda<r_{a,b}^\omega(y)\}}
\quad\text{for almost every }\lambda\in(a,b).
\end{equation}
Thus, for almost every $y\in\R^q$, there is
$r_{a,b}^\omega(y)\in[a,b]$ satisfying
\eqref{eq:pointwise-trace-equilibrium}.

Define
$$
u_0^{\omega,a,b}(x)
:=
\int_{-L}^L
h_0^{\omega,a,b}(x,\lambda)
\,d\lambda
-
L.
$$
The function $u_0^{\omega,a,b}$ is measurable and takes values in
$[a,b]$.  Equation~\eqref{eq:pointwise-trace-equilibrium} and the
definition \eqref{eq:truncated-pathwise-weak-trace} give
$$
h_0^{\omega,a,b}(x,\lambda)
=
\1_{\{\lambda<u_0^{\omega,a,b}(x)\}}
$$
for almost every $(x,\lambda)$.

Corollary~\ref{cor:essential-chi-rigidity}, applied to every compact spatial
set, now yields
$$
\esslim_{t\downarrow0}
\int_K
\abs{
T_{a,b}(u(t,x))
-
u_0^{\omega,a,b}(x)
}
\,dx
=
0.
$$
This completes the induction.
\end{proof}

For the original datum $\alpha_0\in\mathfrak R$, take
$I=(-L,L)$, $a=-M$, and $b=M$.
For every $\omega\in\Omega_*$, with $\Omega_*$ defined in
\eqref{eq:common-pathwise-event}, one has
$\varnothing\in\mathcal A_{\alpha_0}(\omega)$ by
\eqref{eq:recursive-admissibility-event}.  Equations
\eqref{eq:kinetic-with-trace}, \eqref{eq:pathwise-weak-trace}, and
\eqref{eq:reduction-spatial-martingale}--\eqref{eq:reduction-scalar-martingales},
together with the recursive conclusion of
Lemma~\ref{lem:pathwise-slice-equation}, show that the original solution is an
admissible reduced realization associated with $\alpha_0$.

Since $\abs{u}\leq M$, we have $T_{-M,M}(u)=u$.
For $\phi\in C_c^1(\R^d)$ and
$\rho\in C_c^1((-L,-M))$, the convergence
\eqref{eq:pathwise-weak-trace} and the identity
$h(t,x,\lambda)=1$ for $\lambda\in(-L,-M)$ give
\begin{equation*}
	\int_{\R^d}\int_{\R}
	h_0^\omega(x,\lambda)\phi(x)\rho(\lambda)
	\,d\lambda\,dx
	=
	\left(
	\int_{\R^d}\phi(x)\,dx
	\right)
	\left(
	\int_{\R}\rho(\lambda)\,d\lambda
	\right).
\end{equation*}
Hence $h_0^\omega=1$ almost everywhere on
$\R^d\times(-L,-M)$. Testing instead with
$\rho\in C_c^1((M,L))$ gives $h_0^\omega=0$ almost everywhere on
$\R^d\times(M,L)$. Together with
\eqref{eq:truncated-pathwise-weak-trace}, these identities yield
$h_0^{\omega,-M,M}=h_0^\omega$ almost everywhere.

Proposition~\ref{prop:pathwise-induction} therefore identifies the full
pathwise weak trace as
\begin{equation*}
	h_0^\omega(x,\lambda)
	=
	\1_{\{\lambda<u_0^\omega(x)\}}
\end{equation*}
for a uniquely determined measurable function
$u_0^\omega:\R^d\to[-M,M]$.
It is given by
\begin{equation*}
	u_0^\omega(x)
	=
	\int_{-L}^L
	h_0^\omega(x,\lambda)
	\,d\lambda
	-
	L.
\end{equation*}
Corollary~\ref{cor:essential-chi-rigidity} gives, for every compact
$K\Subset\R^d$,
\begin{equation}\label{eq:pathwise-final-strong-trace}
	\esslim_{t\downarrow0}
	\int_K
	\abs{u(\omega,t,x)-u_0^\omega(x)}
	\,dx
	=
	0.
\end{equation}
This completes the pathwise argument, without requiring measurability of
$\omega\mapsto u_0^\omega$.

%%%%%%%%%%%%%%%%%%%%%%%%%
%%%%%%%%%%%%%%%%%%%%%%%%%
\section{Measurability and the strong trace in expectation}
\label{sec:final-measurability}

We recover joint and $\cF_0$-measurability from time averages and then pass
from pathwise convergence to convergence in expectation.

\begin{lemma}[Measurable representative obtained from time averages]
\label{lem:final-measurability}
The pathwise trace $u_0^\omega$ admits an
$\cF_0\otimes\cB(\R^d)$-measurable representative
$u_0:\Omega\times\R^d\to[-M,M]$.
For almost every $\omega$,
$u_0(\omega,\cdot)=u_0^\omega$ in $L^1_{\loc}(\R^d)$.
Choose an integer $n_0$ such that $1/n_0<T$.  For $n\geq n_0$, set
$$
u_n(\omega,x)
:=
n\int_0^{1/n}
u(\omega,t,x)
\,dt.
$$
Then, for every compact $K\Subset\R^d$,
$$
\E\int_K
\abs{u_n(\omega,x)-u_0(\omega,x)}
\,dx
\longrightarrow0.
$$
\end{lemma}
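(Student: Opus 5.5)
The plan is to build $u_0$ as an almost sure limit of the averages $u_n$ and to obtain its measurability from that of the $u_n$.

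\emph{Step 1: measurability of $u_n$.} Since $u$ is $\mathcal P\otimes\cB(\R^d)$-measurable and predictable processes are adapted, Fubini shows that $(\omega,x)\mapsto u_n(\omega,x)$ is $\cF_{1/n}\otimes\cB(\R^d)$-measurable. More precisely, the restriction of $u$ to $\Omega\times(0,1/n]\times\R^d$ is $\cF_{1/n}\otimes\cB((0,1/n])\otimes\cB(\R^d)$-measurable, and integrating out $t$ preserves this. Also $\abs{u_n}\leq M$.

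\emph{Step 2: pathwise convergence.} Fix $\omega\in\Omega_*$ and a compact $K$. From \eqref{eq:pathwise-final-strong-trace},
$$
\int_K\abs{u_n(\omega,x)-u_0^\omega(x)}\,dx
\leq n\int_0^{1/n}\int_K\abs{u(\omega,t,x)-u_0^\omega(x)}\,dx\,dt
\leq\operatorname*{ess\,sup}_{0<t<1/n}\int_K\abs{u(\omega,t,x)-u_0^\omega(x)}\,dx,
$$
which tends to zero. Hence $u_n(\omega,\cdot)\to u_0^\omega$ in $L^1_{\loc}(\R^d)$ for every $\omega\in\Omega_*$.

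\emph{Step 3: a measurable limit.} To avoid any selection argument, I would define
$$
u_0(\omega,x):=\limsup_{n\to\infty}u_n(\omega,x),
$$
possibly along a deterministic subsequence, and truncate it to $[-M,M]$. I expect this step to be the main obstacle. $L^1_{\loc}$ convergence for each $\omega$ does not by itself give pointwise convergence in $x$ along a sequence independent of $\omega$. The plan to handle it is as follows.
\begin{itemize}
\item Since $\abs{u_n-u_0^\omega}\leq 2M$, dominated convergence in $\omega$ gives $\E\int_K\abs{u_n-u_m}\,dx\to0$ as $n,m\to\infty$. The quantity $\int_K\abs{u_n-u_m}\,dx$ is measurable in $\omega$ by Step 1, so this statement makes sense without knowing anything about $u_0^\omega$.
\item Choose a deterministic subsequence $n_k$ with $\sum_k\E\int_{K_j}\abs{u_{n_{k+1}}-u_{n_k}}\,dx<\infty$ for all $j$, by diagonalizing over $j$.
\item By Borel--Cantelli and Fubini, $u_{n_k}(\omega,x)$ then converges for $\Prob\otimes\mathcal L^d$-a.e. $(\omega,x)$. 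Moreover, for a.e. $\omega$ it converges for a.e. $x$.
\item For a.e. $\omega$ the pointwise limit coincides a.e. in $x$ with the $L^1_{\loc}$ limit $u_0^\omega$.
\end{itemize}

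\emph{Step 4: $\cF_0$-measurability.} For each $m$, the limsup along $(n_k)_{k\geq k_m}$ with $n_{k_m}\geq m$ involves only $u_{n_k}$ that are $\cF_{1/m}\otimes\cB(\R^d)$-measurable. Since the limsup does not depend on the first terms, $u_0$ is $\cF_{1/m}\otimes\cB(\R^d)$-measurable for every $m$. To pass to $\cF_0=\bigcap_m\cF_{1/m}$ at the product level, I would not intersect product $\sigma$-algebras directly. Instead I use that $u_0$ is bounded and approximate it via Bochner--Fubini: for $\chi$ in a countable dense subset of $L^1(K_j)$, the scalar variables $\int u_0\chi\,dx$ are $\cF_{1/m}$-measurable for all $m$, hence $\cF_0$-measurable by right-continuity. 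The $L^1_{\loc}$-valued map $\omega\mapsto u_0(\omega,\cdot)$ is therefore $\cF_0$-measurable, because Borel sets of the separable space $L^1(K_j)$ are generated by these pairings. Then $u_0$ has a jointly $\cF_0\otimes\cB(\R^d)$-measurable representative, for instance the limsup of Lebesgue averages over rational balls, each of which is $\cF_0\otimes\cB$-measurable in $(\omega,x)$. Completeness of $\cF_0$ absorbs the null modifications.

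\emph{Step 5: convergence in mean.} Finally, $\E\int_K\abs{u_n-u_0}\,dx\to0$ follows from Step 2 and dominated convergence with bound $2M\abs{K}$.
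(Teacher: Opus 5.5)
Your proof is correct. Its skeleton is the paper's: time averages, $\cF_{1/n}$-measurability, pathwise $L^1_{\loc}$ convergence from the essential-supremum bound, a tail argument with right-continuity, and dominated convergence. The difference lies in how the limit and its jointly measurable $\cF_0$-version are produced.

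The paper treats $\omega\mapsto u_n(\omega,\cdot)$ as $\cF_{1/n}$-measurable random variables in the Polish space $L^1_{\loc}(\R^d)$ and takes the limit on the measurable Cauchy set. Tail measurability is then immediate, because one intersects the $\sigma$-algebras $\cF_{1/m}$ themselves rather than product $\sigma$-algebras. A jointly $\cF_0\otimes\cB(\R^d)$-measurable function is recovered afterwards from the Bochner--Fubini representation on each $B_j$, pasted over annuli.

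You instead combine the pathwise convergence with dominated convergence to make $(u_n)$ Cauchy in $L^1(\Omega\times K)$. You then extract a deterministic fast subsequence and take a pointwise $\limsup$. That function is jointly measurable for free, but a priori only $\cF_{1/m}\otimes\cB(\R^d)$-measurable for each $m$. You rightly do not identify $\bigcap_m(\cF_{1/m}\otimes\cB(\R^d))$ with $\cF_0\otimes\cB(\R^d)$. Instead you rebuild the representative from ball averages $(\omega,x)\mapsto\abs{B_r}^{-1}\int_{B_r(x)}u_0(\omega,y)\,dy$ with rational $r$. These are $\cF_0$-measurable in $\omega$ and continuous in $x$, hence jointly measurable (Carath\'eodory functions).

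Your route gives an explicit, citation-free representative whose $x$-sections agree with the previous function for every $\omega$. The price is an extra extraction and a second construction.

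Two small remarks. First, the detour through generation of the Borel sets of $L^1(K_j)$ is unnecessary. The Carath\'eodory argument only needs each fixed ball average to be $\cF_{1/m}$-measurable for every $m$, which follows from Fubini and $\abs{u_0}\leq M$. Likewise, pairing against a dense subset of $L^1$ only makes sense because of that bound. Second, the a.e.\ convergence of $u_{n_k}$ comes from monotone convergence applied to $\sum_k\abs{u_{n_{k+1}}-u_{n_k}}$, not from Borel--Cantelli.
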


\begin{proof}
By the representative fixed after Definition~\ref{def:kinetic-solution},
$u$ is predictable and satisfies $\abs{u}\leq M$ everywhere.  Hence
$\abs{u_n}\leq M$.  Predictability is used below to prove the joint
$\cF_{1/n}\otimes\cB(\R^d)$-measurability of $u_n$, while the uniform bound
gives local integrability and the domination needed for the final
convergence in expectation.

Fix the integer $n_0$ chosen in the statement.
For $n\geq n_0$, consider the time average $u_n$ defined in the statement.
The predictable sigma-field restricted to
$\Omega\times(0,1/n)$ is contained in
$\cF_{1/n}\otimes\cB((0,1/n))$.  Parameterized integration (integration with
respect to $t$, with $(\omega,x)$ retained as the parameter) therefore shows
that
\begin{equation}\label{eq:time-average-measurability}
	(\omega,x)
	\longmapsto
	u_n(\omega,x)
\end{equation}
is $\cF_{1/n}\otimes\cB(\R^d)$-measurable.  Moreover,
$\abs{u_n}\leq M$.

Fix a compact set $K\Subset\R^d$.  For every realization for which
\eqref{eq:pathwise-final-strong-trace} holds,
\begin{align*}
	\int_K
	\abs{u_n(\omega,x)-u_0^\omega(x)}
	\,dx
	&\leq
	n\int_0^{1/n}
	\int_K
	\abs{u(\omega,t,x)-u_0^\omega(x)}
	\,dx\,dt
	\\
	&\leq
	\operatorname*{ess\,sup}_{0<t<1/n}
	\int_K
	\abs{u(\omega,t,x)-u_0^\omega(x)}
	\,dx.
\end{align*}
The right-hand side tends to zero by
\eqref{eq:pathwise-final-strong-trace}.  Hence
$u_n(\omega,\cdot)\to u_0^\omega$ in $L^1_{\loc}(\R^d)$
for almost every $\omega$.

Equip $L^1_{\loc}(\R^d)$ with a standard complete separable metric, for
example
$$
d(v,w)
:=
\sum_{j=1}^\infty
2^{-j}
\min
\left\{
1,
\norm{v-w}_{L^1(B_j)}
\right\}.
$$
We verify the corresponding function-space measurability.  For every
$j\geq1$ and $v\in L^1(B_j)$, Tonelli's theorem and
\eqref{eq:time-average-measurability} imply measurability of
\begin{equation*}
	\omega
	\longmapsto
	\norm{
	u_n(\omega,\cdot)-v
	}_{L^1(B_j)}.
\end{equation*}
Open balls centered at a countable dense subset generate the Borel
sigma-field of $L^1(B_j)$.  Hence $\omega
\longmapsto u_n(\omega,\cdot)\big|_{B_j}$ 
is strongly measurable.  Since the Borel sigma-field associated with the
metric $d$ is generated by these restriction maps,
\begin{equation}\label{eq:L1loc-time-average-measurability}
	\omega
	\longmapsto
	u_n(\omega,\cdot)
\end{equation}
is an $\cF_{1/n}$-measurable
$L^1_{\loc}(\R^d)$-valued map.

Let $C$ be the measurable set on which
$(u_n(\omega,\cdot))$ is Cauchy in the metric $d$, and define
\begin{equation}\label{eq:measurable-trace-definition}
	u_0(\omega,\cdot)
	:=
	\begin{cases}
		\displaystyle
		\lim_{n\to\infty}u_n(\omega,\cdot),
		&\omega\in C,
		\\
		0,
		&\omega\notin C.
	\end{cases}
\end{equation}
Completeness of $L^1_{\loc}(\R^d)$ and
\eqref{eq:L1loc-time-average-measurability} show that this is a
measurable map
$\omega\mapsto u_0(\omega,\cdot)\in L^1_{\loc}(\R^d)$
which agrees with $u_0^\omega$ for almost every $\omega$.

The subset
$$
\left\{
v\in L^1_{\loc}(\R^d):
\abs{v}\leq M
\text{ almost everywhere}
\right\}
$$
is closed in $L^1_{\loc}(\R^d)$.  Since every $u_n$ belongs to this set, the
limit in \eqref{eq:measurable-trace-definition} satisfies
$\abs{u_0}\leq M$ almost everywhere.

It remains to identify the initial sigma-field.  Fix $m\geq n_0$.  For
every $n\geq m$, \eqref{eq:L1loc-time-average-measurability} and
$\cF_{1/n}\subset\cF_{1/m}$ for $n\geq m$,
show that the tail $(u_n)_{n\geq m}$, its Cauchy set $C$, and the
deterministically defined limit \eqref{eq:measurable-trace-definition} are
$\cF_{1/m}$-measurable. Therefore $u_0$ is measurable with respect to
$\bigcap_{m\geq n_0}\cF_{1/m}$, which equals $\cF_0$ by right-continuity
of the filtration.

We next turn the $L^1_{\loc}(\R^d)$-valued random variable $u_0$ into one
jointly measurable function of $(\omega,x)$.  Recall that
$B_j:=\{x\in\R^d:\abs{x}<j\}$ is the open ball of radius $j$ centered at
the origin; the nested balls $(B_j)_{j\geq1}$ have finite measure and exhaust
$\R^d$.  We work locally because $u_0$ need not belong to $L^1(\R^d)$,
whereas its restriction to every $B_j$ is integrable.
For $j\geq1$, set
$U_j(\omega):=u_0(\omega,\cdot)|_{B_j}\in L^1(B_j)$.
The map $U_j$ is strongly $\cF_0$-measurable, and the bound above gives
\begin{equation*}
	\int_\Omega
	\norm{U_j(\omega)}_{L^1(B_j)}
	\,d\Prob(\omega)
	\leq
	M\abs{B_j}.
\end{equation*}
Thus $U_j\in L^1(\Omega,\cF_0,\Prob;L^1(B_j))$.
By the Bochner--Fubini identification and its jointly measurable
representative formulation
\cite[Propositions~1.2.24 and~1.2.25]{Hytonen:2016aa}, there exists an
$\cF_0\otimes\cB(B_j)$-measurable function $\widetilde u_j$ such that
\begin{equation}\label{eq:jointly-measurable-local-representative}
	\widetilde u_j(\omega,\cdot)
	=
	U_j(\omega)
	\quad\text{in }L^1(B_j)
\end{equation}
for almost every $\omega$.  These local representatives describe
restrictions of the same $L^1_{\loc}$-valued map, but they need not agree
pointwise on their overlapping domains.  To obtain one function without
making additional pointwise choices, use the disjoint annuli
$B_j\setminus B_{j-1}$, which cover $\R^d$.  With
$B_0:=\varnothing$, define
\begin{equation*}
	\widetilde u_0(\omega,x)
	:=
	\widetilde u_j(\omega,x)
	\quad\text{for }
	x\in B_j\setminus B_{j-1}.
\end{equation*}
The disjointness makes this definition unambiguous.  For each $k$,
\eqref{eq:jointly-measurable-local-representative} and the identities
$U_j=U_k|_{B_j}$ for $j\leq k$ show that the pasted function represents
$U_k$ on $B_k$ for almost every $\omega$.  The countability of the pasting
therefore shows that
$\widetilde u_0$ is $\cF_0\otimes\cB(\R^d)$-measurable
and represents the $L^1_{\loc}(\R^d)$-valued map $u_0$ for almost every
$\omega$.  Rename $\widetilde u_0$ as $u_0$ and replace it by
$
T_{-M,M}(u_0).
$
The replacement preserves its $L^1_{\loc}$ equivalence class and
$\cF_0\otimes\cB(\R^d)$-measurability, and it now takes values in
$[-M,M]$ at every point.

Finally, for every compact $K\Subset\R^d$,
$$
\int_K
\abs{u_n(\omega,x)-u_0(\omega,x)}
\,dx
\longrightarrow0
$$
for almost every $\omega$, and
$$
0
\leq
\int_K
\abs{u_n-u_0}
\,dx
\leq
2M\abs{K}.
$$
Dominated convergence gives
$$
\E\int_K
\abs{u_n-u_0}
\,dx
\longrightarrow0.
$$
\end{proof}

Dominated convergence now upgrades the pathwise result
\eqref{eq:pathwise-final-strong-trace} to the product-space conclusion.

\begin{corollary}[Strong trace in expectation]
\label{cor:strong-trace-expectation}
For every compact set $K\Subset\R^d$,
$$
\esslim_{t\downarrow0}
\E\int_K
\abs{u(\omega,t,x)-u_0(\omega,x)}
\,dx=0.
$$
Equivalently,
$u(t)\to u_0$
strongly in $L^1(\Omega\times K)$, in the essential-time sense.
\end{corollary}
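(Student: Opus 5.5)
The plan is to combine the pathwise strong trace \eqref{eq:pathwise-final-strong-trace} with the identification $u_0(\omega,\cdot)=u_0^\omega$ in $L^1_{\loc}(\R^d)$ for almost every $\omega$ from Lemma~\ref{lem:final-measurability}, and then to pass the essential limit through the expectation by a dominated-convergence argument. The only subtlety is that an essential supremum in $t$ does not commute with the expectation. We therefore work with a jointly measurable function of $(\omega,t)$ and with essential bounds in $t$, rather than with a pointwise-in-$t$ limit.

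First, fix a compact $K$ and set
$$
F(\omega,t):=\int_K\abs{u(\omega,t,x)-u_0(\omega,x)}\,dx .
$$
Since $u$ is $\mathcal P\otimes\cB(\R^d)$-measurable and $u_0$ is $\cF_0\otimes\cB(\R^d)$-measurable, Tonelli's theorem shows that $F$ is $\cF\otimes\cB((0,T))$-measurable. We also have $0\le F\le 2M\abs{K}$.

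Define
$$
G_\delta(\omega):=\operatorname*{ess\,sup}_{0<t<\delta}F(\omega,t).
$$
For almost every $\omega$, namely on $\Omega_*$ intersected with the full-measure set where $u_0(\omega,\cdot)=u_0^\omega$, the pathwise result gives $G_\delta(\omega)\to0$ as $\delta\downarrow0$. Measurability of $G_\delta$ in $\omega$ requires care, since an essential supremum over uncountably many $t$ is involved. I would obtain it by writing
$$
G_\delta(\omega)=\lim_{p\to\infty}\left(\int_0^\delta F(\omega,t)^p\,\frac{dt}{\delta}\right)^{1/p},
$$
which holds because $F(\omega,\cdot)$ is bounded. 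Each integral on the right is measurable in $\omega$ by Fubini, so $G_\delta$ is measurable. This is the step I expect to need the most justification; the rest is routine.

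Next, set $E(t):=\E F(\cdot,t)$, which is defined for almost every $t$ by Fubini. I need to show that $\operatorname*{ess\,sup}_{0<t<\delta}E(t)\le \E G_\delta$. For each fixed $\omega$, we have $F(\omega,t)\le G_\delta(\omega)$ for almost every $t\in(0,\delta)$. The set
$$
N:=\{(\omega,t):\ 0<t<\delta,\ F(\omega,t)>G_\delta(\omega)\}
$$
is product-measurable, and every $\omega$-section of $N$ is Lebesgue-null. By Fubini, $N$ is $(\Prob\otimes\mathcal L^1)$-null, so almost every $t$-section of $N$ is $\Prob$-null. Hence $E(t)\le\E G_\delta$ for almost every $t\in(0,\delta)$.

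Finally, dominated convergence with the bound $G_\delta\le 2M\abs{K}$ and the pointwise limit $G_\delta\to0$ almost surely gives $\E G_\delta\to0$ as $\delta\downarrow0$. Therefore $\esslim_{t\downarrow0}E(t)=0$, which is the assertion. Since $E(t)=\norm{u(t)-u_0}_{L^1(\Omega\times K)}$, this is exactly the stated strong convergence in $L^1(\Omega\times K)$ in the essential-time sense.
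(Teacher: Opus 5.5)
Your proposal is correct and follows the paper's proof essentially step for step: the same jointly measurable $F$, the same $G_\delta$, the bound $\operatorname*{ess\,sup}_{0<t<\delta}\E F(t)\le\E G_\delta$, and dominated convergence. The differences are minor. You prove measurability of $G_\delta$ as a limit of normalized $L^p(0,\delta)$ norms, whereas the paper uses the superlevel-set identity $\{G_{K,\delta}>a\}=\{\omega:\int_0^\delta\1_{\{F_K>a\}}\,dt>0\}$. You also spell out, via the product-null set $N$, the Fubini step that the paper states without detail.
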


\begin{proof}
For a compact set $K\Subset\R^d$, define
$F_K(t,\omega):=\int_K\abs{u(\omega,t,x)-u_0(\omega,x)}\,dx$.
The joint measurability of $u$ and $u_0$ implies that
$(t,\omega)\mapsto F_K(t,\omega)$ is measurable. Moreover,
$0\leq F_K(t,\omega)\leq2M\abs{K}$.

For $\delta>0$, set
$$
G_{K,\delta}(\omega)
:=
\operatorname*{ess\,sup}_{0<t<\delta}
F_K(t,\omega).
$$
The essential supremum is measurable because its superlevel sets can be
expressed in terms of the Lebesgue measure of time-superlevel sets.  Indeed,
for every $a\in\R$,
$$
\{G_{K,\delta}>a\}
=
\left\{
\omega:
\int_0^\delta
\1_{\{F_K(t,\omega)>a\}}
\,dt
>0
\right\}.
$$
The indicator in the last integral is jointly measurable in $(t,\omega)$
because $F_K$ is jointly measurable.  Tonelli's theorem therefore makes the
integral a measurable function of $\omega$, so the displayed set belongs to
$\cF$.  Hence all superlevel sets of $G_{K,\delta}$ are measurable, and
$G_{K,\delta}$ is a random variable.  This permits the expectation and the
dominated-convergence argument below.
Since
$u_0(\omega,\cdot)=u_0^\omega$ in $L^1_{\loc}(\R^d)$
for almost every $\omega$, the pathwise trace
\eqref{eq:pathwise-final-strong-trace} gives
$G_{K,\delta}(\omega)\to0$ as $\delta\downarrow0$
for almost every $\omega$.  Also,
$0\leq G_{K,\delta}\leq2M\abs{K}$. Dominated convergence therefore
yields $\E G_{K,\delta}\to0$.

For almost every $t\in(0,\delta)$,
$F_K(t,\omega)\leq G_{K,\delta}(\omega)$
for almost every $\omega$.  Consequently,
$$
\operatorname*{ess\,sup}_{0<t<\delta}
\E F_K(t)
\leq
\E G_{K,\delta}.
$$
Letting $\delta\downarrow0$, we obtain
$$
\esslim_{t\downarrow0}
\E F_K(t)
=
0,
$$
which is the asserted strong trace in expectation.
\end{proof}

\begin{proof}[Completion of the proof of Theorem~\ref{thm:main}]
Lemma~\ref{lem:final-measurability} gives an
$\cF_0\otimes\cB(\R^d)$-measurable representative $u_0$ and a
full-probability set $\Omega_0\subset\Omega_*$ on which
$u_0(\omega,\cdot)=u_0^\omega$ in $L^1_{\loc}(\R^d)$.  Equation
\eqref{eq:pathwise-final-strong-trace} gives the pathwise strong trace for
every $\omega\in\Omega_0$, and
Corollary~\ref{cor:strong-trace-expectation} gives convergence in expectation.

It remains to prove uniqueness. Let $v_0$ be another
$\cF_0\otimes\cB(\R^d)$-measurable function satisfying the strong-trace
conclusion of Corollary~\ref{cor:strong-trace-expectation}, with $v_0$ in
place of $u_0$. For every compact $K\Subset\R^d$ and almost every $t>0$,
\begin{align*}
	\E\int_K
	\abs{u_0(\omega,x)-v_0(\omega,x)}
	\,dx
	&\leq
	\E\int_K
	\abs{u(\omega,t,x)-u_0(\omega,x)}
	\,dx
	\\
	&\quad+
	\E\int_K
	\abs{u(\omega,t,x)-v_0(\omega,x)}
	\,dx.
\end{align*}
Taking the essential limit as $t\downarrow0$ gives
$\E\int_K\abs{u_0-v_0}\,dx=0$. Hence $u_0=v_0$ almost everywhere on
$\Omega\times K$. An exhaustion of $\R^d$ proves
uniqueness on $\Omega\times\R^d$.
\end{proof}

The following remark relates the recovered trace to a prescribed initial
datum when the solution is known independently to solve a Cauchy problem.

\begin{remark}
No value at $t=0$ is included in the definition of the solution.  The trace
$u_0$ is recovered from the solution itself.  Suppose, in addition, that a
prescribed datum $\overline u_0$ satisfies
$$
\esslim_{t\downarrow0}
\E\int_K
\abs{u(t,x)-\overline u_0(x)}
\,dx=0
$$
for every compact $K\Subset\R^d$, as required by the chosen Cauchy-solution
concept for \eqref{eq:spde}.  The uniqueness argument in the proof of
Theorem~\ref{thm:main} then gives
$u_0=\overline u_0$ almost everywhere on $\Omega\times\R^d$.
\end{remark}

%%%%%%%%%%%%%%%%%%%%%%%%%
%%%%%%%%%%%%%%%%%%%%%%%%%
\section*{Acknowledgment}

The research of ME is supported by the Croatian Science Foundation
(UIP-2025-02-1337) and by the European Union - NextGenerationEU 
through the National
Recovery and Resilience Plan 2021-2026 (IK IA 1.1.3. Impact4Math, 
institutional grant of
University of Zagreb, Faculty of Science). 
The research of KHK is supported by the Research Council
of Norway (351123/NASTRAN), and that of DM is partly supported by the
Austrian Science Fund (P~35508).

%%%%%%%%%%%%%%%%%%%%%%%%%
%%%%%%%%%%%%%%%%%%%%%%%%%

\end{document}